\newtheorem{theorem}{Theorem}
\newtheorem{lemma}[theorem]{Lemma}
\newtheorem{corollary}[theorem]{Corollary}
\theoremstyle{definition}
\newtheorem{remark}[theorem]{Remark}
\newtheorem{example}[theorem]{Example}
\newtheorem{problem}[theorem]{Problem}
\newtheorem{definition}[theorem]{Definition}
\numberwithin{equation}{section}
\numberwithin{theorem}{section}
\author{B.~Fuglede}
\address{Department of Mathematical Sciences, University of Copenhagen, Universitetsparken 5,
2100 Copenhagen, Denmark}
\email{fuglede@math.ku.dk}
\author{N.~Zorii}
\address{Institute of Mathematics,
National Academy of Sciences of Ukraine, Tereshchenkivska 3, 01601,
Kyiv-4, Ukraine}
\email{natalia.zorii@gmail.com}
\begin{document}

\title[Condensers with touching plates]
{Constrained minimum Riesz and Green energy problems for vector measures associated with a~generalized condenser}

\begin{abstract}
For a finite collection $\mathbf A=(A_i)_{i\in I}$ of locally closed sets in $\mathbb R^n$, $n\geqslant3$, with the sign $\pm1$ prescribed such that the oppositely charged plates are mutually disjoint, we consider the minimum energy problem relative to the $\alpha$-Riesz kernel $|x-y|^{\alpha-n}$, $\alpha\in(0,2]$, over positive vector Radon measures $\boldsymbol\mu=(\mu^i)_{i\in I}$ such that each $\mu^i$, $i\in I$, is carried by $A_i$ and normalized by $\mu^i(A_i)=a_i\in(0,\infty)$.  We show that, though the closures of oppositely charged plates may intersect each other even in a set of nonzero capacity, this problem has a solution $\boldsymbol\lambda^{\boldsymbol\xi}_{\mathbf A}=(\lambda^i_{\mathbf A})_{i\in I}$ (also in the presence of an external field) if we restrict ourselves to $\boldsymbol\mu$
with $\mu^i\leqslant\xi^i$, $i\in I$, where the constraint $\boldsymbol\xi=(\xi^i)_{i\in I}$ is properly chosen. We establish the sharpness of the sufficient conditions on the solvability thus obtained, provide descriptions of the weighted vector $\alpha$-Riesz potentials of the solutions, single out their characteristic properties, and analyze the supports of the $\lambda^i_{\mathbf A}$, $i\in I$. Our approach is based on the simultaneous use of the vague topology and an appropriate semimetric structure defined in terms of the $\alpha$-Riesz energy on a set of vector measures associated with $\mathbf A$, as well as on the establishment of an intimate relationship between the constrained minimum $\alpha$-Riesz energy problem and a constrained minimum $\alpha$-Green energy problem, suitably formulated. The results are illustrated by examples.
\end{abstract}
\maketitle

\section{Introduction}

The purpose of this paper is to study {\it minimum energy problems with external fields\/} (also known in the literature as {\it weighted minimum energy problems\/} or as {\it Gauss variational problems\/}) relative to the {\it $\alpha$-Riesz kernel\/} $\kappa_\alpha(x,y)=|x-y|^{\alpha-n}$ of  order $\alpha\in(0,2]$, where $|x-y|$ is the Euclidean distance between $x,y\in\mathbb R^n$, $n\geqslant3$, and infimum is taken over classes of vector measures $\boldsymbol\mu=(\mu^i)_{i\in I}$ associated with a generalized condenser $\mathbf A=(A_i)_{i\in I}$ and normalized by $\mu^i(A_i)=a_i\in(0,\infty)$, $i\in I$. More precisely, an ordered finite collection $\mathbf A$ of {\it locally closed\/} sets $A_i$, $i\in I$, termed {\it plates\/}, with the sign $s_i=\pm1$ prescribed is said to be a {\it generalized condenser\/} if the oppositely signed plates are mutually disjoint, while a vector measure $\boldsymbol\mu=(\mu^i)_{i\in I}$ is said to be {\it associated with\/} $\mathbf A$ if each $\mu^i$, $i\in I$, is a positive scalar Radon measure (charge) carried by $A_i$. In accordance with an electrostatic interpretation of a condenser, we say that the interaction between the components $\mu^i$, $i\in I$, of such a $\boldsymbol\mu$ is characterized by the matrix $(s_is_j)_{i,j\in I}$, so that the {\it $\mathbf f$-weighted $\alpha$-Riesz energy\/} of $\boldsymbol\mu$ is defined by
\[G_{\kappa_\alpha,\mathbf f}(\boldsymbol\mu):=\sum_{i,j\in I}\,s_is_j\iint|x-y|^{\alpha-n}\,d\mu^i(x)\,d\mu^j(y)+2\sum_{i\in I}\,\int f_i\,d\mu^i,\]
where $\mathbf f=(f_i)_{i\in I}$, each $f_i:\ \mathbb R^n\to[-\infty,\infty]$ being a universally meas\-ur\-ab\-le function treated as an {\it external field\/} acting on the charges carried by the $A_i$.

The difficulties appearing in the course of our investigation are caused by the fact that a short-circuit may occur between $A_i$ and $A_j$ with $s_is_j=-1$, because these conductors may have zero Euclidean distance. See Theorem~\ref{pr1uns} below providing an example of a generalized condenser with {\it no\/} $\alpha$-Riesz energy minimizer. It is therefore meaningful to ask what kinds of additional requirements on the objects in question will prevent this blow-up effect, and secure that a solution to the corresponding $\mathbf f$-weighted minimum $\alpha$-Riesz energy problem does exist.
We show that, though the {\it closures\/} of oppositely charged plates may intersect each other even in a set of {\it nonzero\/} $\alpha$-Riesz capacity, such minimum energy problem is nevertheless solvable ({\it no\/}  short-circuit occurs) if we restrict ourselves to $\boldsymbol\mu$ with $\mu^i\leqslant\xi^i$, $i\in I$, where the constraint $\boldsymbol\xi=(\xi^i)_{i\in I}$ is properly chosen (see Sections~\ref{sec:constr} and~\ref{sec-form} for a formulation of the constrained problem). Sufficient conditions for the existence of solutions $\boldsymbol\lambda_{\mathbf A}^{\boldsymbol\xi}=(\lambda^i_{\mathbf A})_{i\in I}$ to the constrained minimum $\alpha$-Riesz energy problem are established in Theorems~\ref{th-suff} and~\ref{l:eq}; those conditions are shown in Theorem~\ref{th-unsuff} to be sharp. The uniqueness of solutions is studied in Lemma~\ref{lemma:unique:} and Corollary~\ref{cor-unique}. We also provide descriptions of the $\mathbf f$-weighted vector $\alpha$-Riesz potentials of the solutions $\boldsymbol\lambda_{\mathbf A}^{\boldsymbol\xi}$, single out their characteristic properties, and analyze the supports of the components $\lambda^i_{\mathbf A}$, $i\in I$ (Theorems~\ref{desc-pot}, \ref{desc-sup} and~\ref{zone}). The results are illustrated in Examples~\ref{ex} and~\ref{ex2}.

In particular, let $\mathbf A=(A_1,A_2)$ be a generalized condenser with the positive plate $A_1:=D$ and the negative plate $A_2:=\mathbb R^n\setminus D$, $D$ being an (open connected) bounded domain in $\mathbb R^n$ with $m_n(D)>1$ where $m_n$ is the $n$-dim\-en\-sional Lebesgue measure, and let $\mathbf f=\mathbf 0$. Then $\inf\,G_{\kappa_\alpha,\mathbf f}(\boldsymbol\mu)$ over all $\boldsymbol\mu=(\mu^1,\mu^2)$ associated with $\mathbf A$ and normalized by $\mu^i(A_i)=1$, $i=1,2$, is an {\it actual\/} minimum (although $A_2\cap C\ell_{\mathbb R^n}A_1=\partial D$) if we require additionally that $\mu^1\leqslant\xi^1:=m_n|_D$ and $\mu^2\leqslant\xi^2$, where $\xi^2$ is a positive Radon measure carried by $A_2$ and possessing the property $\xi^2\geqslant(m_n|_D)^{A_2}$ (cf.\ Theorems~\ref{th-suff} and~\ref{l:eq}). Here $m_n|_D$ denotes the restriction of $m_n$ on $D$, and $(m_n|_D)^{A_2}$ the $\alpha$-Riesz balayage of $m_n|_D$ onto~$A_2$.

The approach developed is mainly based on the simultaneous use of the vague topology and an appropriate (semi)metric structure defined in terms of the $\alpha$-Riesz energy on a set of vector measures associated with a generalized condenser (see Section~\ref{sec-Metric} for a definition of such a (semi)metric structure\footnote{A key observation behind that definition is the fact that there corresponds to every positive vector measure $\boldsymbol\mu=(\mu^i)_{i\in I}$ of finite energy associated with $\mathbf A$ a scalar ({\it signed\/}) Radon measure $R\boldsymbol\mu=\sum_{i\in I}\,s_i\mu^i$ on $\mathbb R^n$, and the mapping $R:\boldsymbol\mu\mapsto R\boldsymbol\mu$ preserves the corresponding energy semimetric (see Theorem~\ref{lemma:semimetric}). This approach extends that from \cite{ZPot1}--\cite{ZPot3} where the closures of the oppositely charged plates were assumed to be mutually disjoint.}),
 as well as on the establishment of an intimate relationship between the constrained minimum $\alpha$-Riesz energy problem and a constrained minimum $\alpha$-Green energy problem, suitably formulated. Regarding the corresponding minimum $\alpha$-Green energy problem, crucial to the arguments applied in its investigation is the {\it perfectness\/} of the $\alpha$-Green kernel $g^\alpha_D$ on a domain $D$, established recently by the authors \cite{FZ}, which amounts to the completeness in the topology defined by the energy norm $\|\nu\|_{g^\alpha_D}:=\sqrt{g^\alpha_D(\nu,\nu)}$ of the cone of all positive scalar Radon measures $\nu$ on $D$ with finite $\alpha$-Green energy $g^\alpha_D(\nu,\nu):=\iint g^\alpha_D(x,y)\,d\nu(x)\,d\nu(y)<\infty$.

\section{Preliminaries}\label{sec:princ}

Let $X$ be a locally compact (Hausdorff) space \cite[Chapter~I, Section~9, n$^\circ$\,7]{B1}, to be specified below, and $\mathfrak M(X)$ the linear
space of all real-valued scalar Radon measures $\mu$ on $X$, equipped with the {\it vague\/} topology, i.e.\ the topology of
pointwise convergence on the class $C_0(X)$ of all continuous functions\footnote{When speaking of a continuous numerical function we understand that the values are {\it finite\/} real numbers.} on $X$ with compact
support. We refer the reader to \cite{B2,Bou,E2} for the theory of measures and integration on a locally compact space, to be used throughout the paper (see also \cite{F1} for a short survey). In all that follows the integrals are understood as {\it upper\/} integrals~\cite{B2}.

For the purposes of the present study it is enough to assume that $X$ is metrizable and {\it countable at infinity\/}, where the latter means that $X$ can be represented as a countable union of compact sets \cite[Chapter~I, Section~9, n$^\circ$\,9]{B1}. Then the vague topology on $\mathfrak M(X)$ satisfies the first axiom of countability \cite[Remark~2.5]{DFHSZ2}, and the vague convergence is entirely determined by convergence of sequences. The vague topology on $\mathfrak M(X)$ is Hausdorff; hence, a vague limit of any sequence in $\mathfrak M(X)$ is {\it unique\/} (whenever it exists).

We denote by $\mu^+$ and $\mu^-$ the positive and the negative parts in the Hahn--Jordan decomposition of a measure $\mu\in\mathfrak M(X)$, and by $S^\mu_{X}=S(\mu)$ its support. A measure $\mu\in\mathfrak M(X)$ is said to be {\it bounded\/} if $|\mu|(X)<\infty$, where $|\mu|:=\mu^++\mu^-$. Let $\mathfrak M^+(X)$ stand for the (convex, vaguely closed) cone of all positive $\mu\in\mathfrak M(X)$, and let $\Psi(X)$ consist of all lower semicontinuous (l.s.c.)
functions $\psi: X\to(-\infty,\infty]$, nonnegative unless $X$ is compact.

\begin{lemma}[{\rm see e.g.\ \cite[Section~1.1]{F1}}]\label{lemma-semi}For any\/ $\psi\in\Psi(X)$ the map\/ $\mu\mapsto\langle\psi,\mu\rangle:=\int\psi\,d\mu$ is vaguely l.s.c.\ on\/~$\mathfrak M^+(X)$.\end{lemma}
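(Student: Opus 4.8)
The plan is to realize the map $\mu\mapsto\langle\psi,\mu\rangle$ as a pointwise supremum of vaguely continuous functions on $\mathfrak M^+(X)$, and then invoke the elementary fact that an upper envelope of a family of l.s.c.\ functions is again l.s.c.

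First I would treat the case $\psi\geqslant0$, which by hypothesis is the only one to consider unless $X$ is compact. The key classical fact, valid for every $\mu\in\mathfrak M^+(X)$, is that the (upper) integral of a nonnegative l.s.c.\ function is recovered from below by continuous functions with compact support:
\[
\langle\psi,\mu\rangle=\sup\bigl\{\langle\varphi,\mu\rangle:\ \varphi\in C_0(X),\ 0\leqslant\varphi\leqslant\psi\bigr\};
\]
this is precisely how the integral of a l.s.c.\ function against a positive Radon measure is defined in the measure-theoretic framework adopted here (see \cite{B2,Bou,E2}). For each fixed admissible $\varphi$, the map $\mu\mapsto\langle\varphi,\mu\rangle$ is vaguely continuous by the very definition of the vague topology. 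Hence $\mu\mapsto\langle\psi,\mu\rangle$ is the pointwise supremum over this family of vaguely continuous (in particular vaguely l.s.c.) functions, and is therefore vaguely l.s.c.\ on $\mathfrak M^+(X)$.

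It then remains to remove the sign restriction when $X$ is compact. In that case $\psi\in\Psi(X)$ is only assumed bounded below, say $\psi\geqslant-c$ with $c\in[0,\infty)$; then $\psi+c\geqslant0$ is again l.s.c., so by the previous step $\mu\mapsto\langle\psi+c,\mu\rangle$ is vaguely l.s.c. Since $X$ is compact we have $1\in C_0(X)$, whence $\mu\mapsto\langle1,\mu\rangle=\mu(X)$ is vaguely continuous, and consequently $\mu\mapsto\langle\psi,\mu\rangle=\langle\psi+c,\mu\rangle-c\,\mu(X)$ is the difference of a vaguely l.s.c.\ function and a vaguely continuous one, hence vaguely l.s.c.

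I expect the only genuinely delicate point to be the displayed identity expressing $\langle\psi,\mu\rangle$ as a supremum of the $\langle\varphi,\mu\rangle$ over $\varphi\in C_0(X)$ with $0\leqslant\varphi\leqslant\psi$; everything else in the argument is purely formal. Since, however, this identity is exactly the defining property of the upper integral of a l.s.c.\ function with respect to a positive Radon measure in the adopted setting, the proof reduces to quoting that definition correctly and observing that suprema preserve lower semicontinuity.
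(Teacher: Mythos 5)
Your argument is correct and is exactly the standard proof behind the cited reference: in the Bourbaki framework the paper adopts, the upper integral of a nonnegative l.s.c.\ $\psi$ against a positive Radon measure is by definition the supremum of $\langle\varphi,\mu\rangle$ over $\varphi\in C_0(X)$ with $0\leqslant\varphi\leqslant\psi$, so the map is a supremum of vaguely continuous functionals, and the compact case reduces to this by adding a constant. The paper itself offers no proof (it only cites \cite[Section~1.1]{F1}), and your route coincides with the one given there.
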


We define a (function) {\it kernel\/} $\kappa(x,y)$ on $X$ as a symmetric positive function from $\Psi(X\times X)$. Given $\mu,\mu_1\in\mathfrak M(X)$, we denote by
$\kappa(\mu,\mu_1)$ and $\kappa(\cdot,\mu)$ the {\it mutual
energy\/} and the {\it potential\/} relative to the kernel $\kappa$, respectively,
i.e.\footnote{When introducing notation about numerical quantities we assume
the corresponding object on the right to be well defined~--- as a finite real number or~$\pm\infty$.}
\begin{align*}
\kappa(\mu,\mu_1)&:=\iint\kappa(x,y)\,d\mu(x)\,d\mu_1(y),\\
\kappa(x,\mu)&:=\int\kappa(x,y)\,d\mu(y),\quad x\in X.
\end{align*}
Note that $\kappa(x,\mu)$ is well defined provided that $\kappa(x,\mu^+)$ or $\kappa(x,\mu^-)$ is finite, and then $\kappa(x,\mu)=\kappa(x,\mu^+)-\kappa(x,\mu^-)$. In particular, if $\mu\in\mathfrak M^+(X)$ then $\kappa(x,\mu)$ is defined everywhere and represents a l.s.c.\ positive function on $X$ (see Lemma~\ref{lemma-semi}).
Also observe that $\kappa(\mu,\mu_1)$ is well defined and equal to $\kappa(\mu_1,\mu)$ provided that
$\kappa(\mu^+,\mu_1^+)+\kappa(\mu^-,\mu_1^-)$ or $\kappa(\mu^+,\mu_1^-)+\kappa(\mu^-,\mu_1^+)$ is finite.
For $\mu=\mu_1$, $\kappa(\mu,\mu_1)$ becomes the {\it energy\/} $\kappa(\mu,\mu)$. Let $\mathcal E_\kappa(X)$ consist
of all $\mu\in\mathfrak M(X)$ whose energy $\kappa(\mu,\mu)$ is finite, which by definition means that $\kappa(\mu^+,\mu^+)$, $\kappa(\mu^-,\mu^-)$ and $\kappa(\mu^+,\mu^-)$ are all finite, and let $\mathcal E^+_\kappa(X):=\mathcal E_\kappa(X)\cap\mathfrak M^+(X)$.

Given a set $Q\subset X$, let $\mathfrak M^+(Q;X)$ consist of all $\mu\in\mathfrak M^+(X)$ {\it concentrated on\/} (or {\it carried by\/}) $Q$, which means that $X\setminus Q$ is locally $\mu$-negligible, or equivalently that $Q$ is $\mu$-meas\-ur\-able and $\mu=\mu|_Q$, where $\mu|_Q=1_Q\cdot\mu$ is the trace (restriction) of $\mu$ on $Q$ \cite[Section~5, n$^\circ$\,2, Exemple]{Bou}. (Here $1_Q$ denotes the indicator function of $Q$.) If $Q$ is closed then $\mu$ is concentrated on $Q$ if and only if it is supported by $Q$, i.e.\ $S(\mu)\subset Q$. It follows from the countability of $X$ at infinity that the concept of local $\mu$-neg\-lig\-ibility coincides with that of $\mu$-negligibility; and hence $\mu\in\mathfrak M^+(Q;X)$ if and only if $\mu^*(X\setminus Q)=0$, $\mu^*(\cdot)$ being the {\it outer measure\/} of a set.
Write $\mathcal E_\kappa^+(Q;X):=\mathcal E_\kappa(X)\cap\mathfrak M^+(Q;X)$, $\mathfrak M^+(Q,q;X):=\{\mu\in\mathfrak M^+(Q;X):\ \mu(Q)=q\}$ and
$\mathcal E_\kappa^+(Q,q;X):=\mathcal E_\kappa(X)\cap\mathfrak M^+(Q,q;X)$, where $q\in(0,\infty)$.

Among the variety of potential-theoretic principles investigated for example in the comprehensive work by Ohtsuka~\cite{O} (see also the references therein), in the present study we shall only need the following two:
 \begin{itemize}
 \item[$\bullet$] A kernel $\kappa$ is said to satisfy the {\it complete maximum principle} (introduced by Cartan and Deny \cite{CD}) if for any $\mu\in\mathcal E^+_\kappa(X)$ and $\nu\in\mathfrak M^+(X)$ such that $\kappa(x,\mu)\leqslant\kappa(x,\nu)+c$ $\mu$-a.e., where $c\geqslant0$ is a constant, the same inequality holds everywhere on $X$.
\item[$\bullet$] A kernel $\kappa$ is said to be {\it positive definite\/} if $\kappa(\mu,\mu)\geqslant0$ for every (signed) measure $\mu\in\mathfrak M(X)$ for which the energy is well defined; and such $\kappa$ is said to be {\it strictly positive definite\/}, or to satisfy the {\it energy principle\/} if in addition $\kappa(\mu,\mu)>0$ except for $\mu=0$.
\end{itemize}

{\it Unless explicitly stated otherwise, in all that
follows we assume a kernel $\kappa$ to satisfy the energy principle\/}. Then $\mathcal E_\kappa(X)$ forms a pre-Hil\-bert space with the inner product $\kappa(\mu,\mu_1)$ and the energy norm $\|\mu\|_\kappa:=\sqrt{\kappa(\mu,\mu)}$ (see \cite{F1}). The (Hausdorff) topology
on $\mathcal E_\kappa(X)$ defined by the norm $\|\cdot\|_\kappa$ is termed {\it strong\/}.

In contrast to \cite{Fu4,Fu5} where capacity has been treated as a functional acting on positive numerical functions on $X$, in the present study we use the (standard) concept of capacity as a set function. Thus the ({\it inner\/}) {\it capacity\/} of a set $Q\subset X$ relative to the kernel $\kappa$, denoted $c_\kappa(Q)$, is defined by \begin{equation}\label{cap-def}c_\kappa(Q):=\bigl[\inf_{\mu\in\mathcal
E_\kappa^+(Q,1;X)}\,\kappa(\mu,\mu)\bigr]^{-1}\end{equation}
(see e.g.\ \cite{F1,O}). Then $0\leqslant c_\kappa(Q)\leqslant\infty$. (As usual, here and in the sequel the
infimum over the empty set is taken to be $+\infty$. We also set
$1\bigl/(+\infty)=0$ and $1\bigl/0=+\infty$.) Because of the strict positive definiteness of the kernel $\kappa$,
\begin{equation}\label{compact-fin}c_\kappa(K)<\infty\text{ \ for every compact \ }K\subset X.\end{equation}
Furthermore, by \cite[p.~153, Eq.~2]{F1},
\begin{equation}\label{compact}c_\kappa(Q)=\sup\,c_\kappa(K)\quad(K\subset Q, \ K\text{\ compact}).\end{equation}

We shall often use the fact that $c_\kappa(Q)=0$ if and only if $\mu_*(Q)=0$ for every $\mu\in\mathcal E_\kappa^+(X)$, $\mu_*(\cdot)$ being the {\it inner measure\/} of a set \cite[Lemma~2.3.1]{F1}.

As in \cite[p.\ 134]{L}, we call a measure $\mu\in\mathfrak M(X)$ {\it $c_\kappa$-absolutely continuous\/} if $\mu(K)=0$ for every compact set $K\subset X$ with $c_\kappa(K)=0$. It follows from (\ref{compact}) that for such a $\mu$, $|\mu|_*(Q)=0$ for every $Q\subset X$ with $c_\kappa(Q)=0$. Hence every $\mu\in\mathcal E_\kappa(X)$ is $c_\kappa$-ab\-sol\-utely continuous; but not conversely \cite[pp.~134--135]{L}.

\begin{definition}\label{def-perf}Following~\cite{F1}, we call a (strictly positive definite)
kernel $\kappa$ {\it perfect\/} if every strong Cauchy sequence in $\mathcal E_\kappa^+(X)$ converges strongly to any of its vague cluster points\footnote{It follows from Theorem~\ref{fu-complete} that for a perfect kernel such a vague cluster point exists and is unique.}.\end{definition}

\begin{remark}\label{rem:clas}{\rm On $X=\mathbb R^n$, $n\geqslant3$, the $\alpha$-Riesz kernel $\kappa_\alpha(x,y)=|x-y|^{\alpha-n}$, $\alpha\in(0,n)$, is strictly positive definite and perfect \cite{D1,D2}; thus so is the Newtonian kernel $\kappa_2(x,y)=|x-y|^{2-n}$ \cite{Car}. Recently it has been shown by the present authors that if $X=D$ where $D$ is an arbitrary open set in $\mathbb R^n$, $n\geqslant3$, and $g^\alpha_D$, $\alpha\in(0,2]$, is the $\alpha$-Green kernel on $D$ \cite[Chapter~IV, Section~5]{L}, then $\kappa=g^\alpha_D$ is likewise strictly positive definite and perfect \cite[Theorems~4.9 and 4.11]{FZ}.}\end{remark}

\begin{theorem}[{\rm see \cite{F1}}]\label{fu-complete} If a kernel\/ $\kappa$ on a locally compact space\/ $X$ is perfect, then the cone\/ $\mathcal E_\kappa^+(X)$ is strongly complete and the strong topology on\/ $\mathcal E_\kappa^+(X)$ is finer than the\/ {\rm(}induced\/{\rm)} vague topology on\/ $\mathcal E_\kappa^+(X)$.\end{theorem}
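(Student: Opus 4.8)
The plan is to derive both assertions from one key estimate: boundedness of a sequence in the energy norm forces vague relative compactness. First I would record the elementary fact that for \emph{any} $\mu\in\mathcal E_\kappa^+(X)$ and any compact $K\subset X$ one has $\kappa(\mu|_K,\mu|_K)\leqslant\kappa(\mu,\mu)$: indeed, writing $\mu=\mu|_K+\mu|_{X\setminus K}$ with both summands in $\mathfrak M^+(X)$ and using $\kappa\geqslant0$, every term in the expansion of $\kappa(\mu,\mu)$ is finite and nonnegative. Hence, if $\mu(K)>0$, the normalized measure $\mu|_K/\mu(K)$ lies in $\mathcal E_\kappa^+(K,1;X)$, so by the definition (\ref{cap-def}) of capacity, $\mu(K)^{-2}\kappa(\mu|_K,\mu|_K)\geqslant c_\kappa(K)^{-1}$, i.e.\ $\mu(K)^2\leqslant c_\kappa(K)\,\kappa(\mu,\mu)$; by (\ref{compact-fin}), $c_\kappa(K)<\infty$. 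Consequently, any sequence $(\mu_k)\subset\mathcal E_\kappa^+(X)$ with $\sup_k\|\mu_k\|_\kappa<\infty$ satisfies $\sup_k\mu_k(K)<\infty$ for every compact $K$, hence is vaguely bounded and therefore vaguely relatively compact in $\mathfrak M^+(X)$ (a Banach--Alaoglu-type fact, see \cite{B2}); since $X$ is metrizable and countable at infinity, the vague topology is first countable, so $(\mu_k)$ admits a vaguely convergent subsequence.

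Turning to strong completeness, let $(\mu_k)$ be a strong Cauchy sequence in $\mathcal E_\kappa^+(X)$. Being norm-bounded, by the preceding paragraph it has a subsequence converging vaguely to some $\mu_0\in\mathfrak M^+(X)$, and $\mu_0$ is then a vague cluster point of $(\mu_k)$. To place $\mu_0$ in $\mathcal E_\kappa^+(X)$ I would invoke the vague lower semicontinuity of the energy functional $\nu\mapsto\kappa(\nu,\nu)$ on (vaguely bounded subsets of) $\mathfrak M^+(X)$, which follows from Lemma~\ref{lemma-semi} applied to $\kappa\in\Psi(X\times X)$ together with the vague convergence of the product measures $\mu_k\otimes\mu_k$ (see \cite{F1}); this yields $\|\mu_0\|_\kappa^2\leqslant\liminf_k\|\mu_k\|_\kappa^2<\infty$. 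Now Definition~\ref{def-perf} (perfectness) applies directly: the strong Cauchy sequence $(\mu_k)$ converges strongly to its vague cluster point $\mu_0\in\mathcal E_\kappa^+(X)$. Thus every strong Cauchy sequence has a strong limit in the cone, which is strong completeness.

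For the second assertion, since both the strong topology (induced by an inner product, hence metrizable) and the vague topology on $\mathcal E_\kappa^+(X)$ are first countable, it suffices to check that every strongly convergent sequence is vaguely convergent to the same limit. So let $\mu_k\to\mu$ strongly with $\mu\in\mathcal E_\kappa^+(X)$. The sequence is strong Cauchy, hence norm-bounded, hence vaguely relatively (sequentially) compact by the first paragraph. If $\mu'$ is any vague cluster point of $(\mu_k)$, then, exactly as above, $\mu'\in\mathcal E_\kappa^+(X)$ and, by perfectness, $(\mu_k)$ converges strongly to $\mu'$; since strong limits are unique (the strong topology is Hausdorff), $\mu'=\mu$. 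A vaguely relatively compact sequence whose only vague cluster point is $\mu$ converges vaguely to $\mu$ (pass to a subsequence staying outside a vague neighbourhood of $\mu$, extract a vaguely convergent sub-subsequence, and contradict uniqueness of the cluster point); hence $\mu_k\to\mu$ vaguely.

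The technical heart of the argument — and the only place where strict positive definiteness is genuinely used — is the passage from energy-boundedness to vague relative compactness, resting on the capacity estimate $\mu(K)^2\leqslant c_\kappa(K)\|\mu\|_\kappa^2$ and the finiteness (\ref{compact-fin}) of $c_\kappa(K)$. After that, both conclusions are essentially formal consequences of the definition of perfectness, the one remaining subtlety being that one must certify via vague lower semicontinuity of the energy that the vague cluster points actually have finite energy (so that "converges strongly" to them is meaningful); everything else is bookkeeping with the first axiom of countability.
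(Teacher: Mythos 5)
Your proof is correct and follows essentially the same route as Fuglede's original argument in \cite{F1}, which is all the paper itself supplies for this statement (it is quoted with a bare citation and no proof). The capacity estimate $\mu(K)^2\leqslant c_\kappa(K)\,\|\mu\|_\kappa^2$ giving vague boundedness hence vague relative compactness of energy-bounded sets, the vague lower semicontinuity of the energy to certify that vague cluster points lie in $\mathcal E_\kappa^+(X)$, and the reduction to sequences via first countability of both topologies are precisely the ingredients of that proof.
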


\begin{remark}\label{remma}{\rm In contrast to Theorem~\ref{fu-complete}, for a perfect kernel $\kappa$ the whole pre-Hilbert space $\mathcal E_\kappa(X)$ is in general strongly {\it incomplete\/}, and this is the case even for the $\alpha$-Riesz kernel of order $\alpha\in(1,n)$ on $\mathbb R^n$, $n\geqslant 3$
(see \cite{Car} and \cite[Theorem~1.19]{L}). Compare with \cite[Theorem~1]{ZUmzh} where the strong completeness has been established for the metric subspace of all ({\it signed\/}) $\nu\in\mathcal E_{\kappa_\alpha}(\mathbb R^n)$ such that $\nu^+$ and $\nu^-$ are supported by closed nonintersecting sets in $\mathbb R^n$, $n\geqslant3$. This result from \cite{ZUmzh} has been proved with the aid of Deny's theorem~\cite{D1} stating that $\mathcal E_{\kappa_\alpha}(\mathbb R^n)$ can be completed by making use of tempered distributions on $\mathbb R^n$ with finite $\alpha$-Riesz energy, defined in terms of its Fourier transform (compare with Remark~\ref{remark}).}\end{remark}

\begin{remark}\label{remark}{\rm The concept of perfect kernel is an efficient tool in minimum energy problems
over classes of {\it positive scalar\/} Radon measures with
finite energy. Indeed, if $Q\subset X$ is closed, $c_\kappa(Q)\in(0,+\infty)$, and $\kappa$ is perfect, then the minimum energy problem (\ref{cap-def}) has a unique solution $\lambda_Q$ \cite[Theorem~4.1]{F1}; we shall call such a $\lambda_Q$ the ({\it inner\/}) {\it $\kappa$-capacitary measure\/} on~$Q$. Later the concept of perfectness has been shown to be efficient also in minimum energy problems over classes of {\it vector measures\/} associated with a {\it standard condenser\/} \mbox{\cite{ZPot1}--\cite{ZPot3}} (see also Remarks~\ref{r-3} and~\ref{rem-3} below for a short survey). In contrast to \cite[Theorem~1]{ZUmzh}, the approach developed in \mbox{\cite{ZPot1}--\cite{ZPot3}} substantially used the assumption of the boundedness of the kernel on the product of the oppositely charged plates of a condenser, which made it possible to extend Cartan's proof \cite{Car} of the strong completeness of the cone $\mathcal E_{\kappa_2}^+(\mathbb R^n)$ of all {\it positive\/} measures on $\mathbb R^n$ with finite Newtonian energy to an arbitrary perfect kernel $\kappa$ on a locally compact space $X$ and suitable classes of ({\it signed\/}) measures $\mu\in\mathcal E_\kappa(X)$.}\end{remark}

\section{Minimum energy problems for a~generalized condenser in a~locally compact space}

\subsection{Vector measures associated with a generalized condenser}\label{sec:cond} A subset $L$ of a locally compact space $X$ is said to be {\it locally closed\/} if for every $x\in L$ there is a neighborhood $V$ of $x$ in $X$ such that $V\cap L$ is a closed subset of the subspace $L$ \cite[Chapter~I, Section~3, Definition~2]{B1}, or equivalently if $L$ is the intersection of an open and a closed subset of $X$ \cite[Chapter~I, Section~3, Proposition~5]{B1}.

Consider an ordered finite collection $\mathbf A=(A_i)_{i\in I}$ of nonempty, locally closed sets $A_i\subset X$ with the sign $s_i={\rm sign}\,A_i=\pm1$ prescribed. Denote $I^+:=\{i\in I: s_i=+1\}$, $I^-:=I\setminus I^+$ and $p:={\rm Card}\,I$, where $p\geqslant1$ and $I^-$ is allowed to be empty.

\begin{definition}\label{def-cond}We call $\mathbf A$ a {\it generalized condenser in\/} $X$ if $A^+\cap A^-=\varnothing$, where
\[A^+:=\bigcup_{i\in I^+}\,A_i\text{ \ and \ } A^-:=\bigcup_{j\in I^-}\,A_j.\]
\end{definition}

The sets $A_i$, $i\in I^+$, and $A_j$, $j\in I^-$, are said to be the {\it positive\/} and {\it negative\/} plates of the (generalized) condenser $\mathbf A$. To avoid trivialities, we shall always assume that
\begin{equation}\label{nonzero'}c_\kappa(A_i)>0\text{ \ for all \ }i\in I,\end{equation}
the (strictly positive definite) kernel $\kappa$ on $X$ being given. Note that any two equally signed plates may intersect each other or even coincide. Also note that, though $A_i$ and $A_j$ are disjoint for any $i\in I^+$ and $j\in I^-$, their {\it closures\/} in $X$ may intersect each other even in a set with $c_\kappa(\cdot)>0$. The concept of generalized condenser thus defined generalizes that introduced recently in \cite[Section~3]{DFHSZ2}.

\begin{definition}\label{def-cond-st}A generalized condenser $\mathbf A$ is said to be {\it standard\/} if all the (locally closed) sets $A_i$, $i\in I$, are closed in $X$.\end{definition}

Unless explicitly stated otherwise, in all that follows we assume $\mathbf A$ to be a {\it generalized\/} condenser in $X$. Let $\mathfrak M^+(\mathbf A;X)$ consist of all positive vector measures $\boldsymbol\mu=(\mu^i)_{i\in I}$ where each $\mu^i$, $i\in I$, is a positive scalar Radon measure on $X$ that is concentrated on $A_i$, i.e.
\[\mathfrak M^+(\mathbf A;X):=\prod_{i\in I}\,\mathfrak M^+(A_i;X).\]
Elements of $\mathfrak M^+(\mathbf A;X)$ are said to be ({\it vector\/}) {\it measures associated with\/} $\mathbf A$.
If a measure $\boldsymbol\mu\in\mathfrak M^+(\mathbf A;X)$ and a vector-valued function $\boldsymbol{u}=(u_i)_{i\in I}$ with $\mu^i$-meas\-ur\-able components $u_i:X\to[-\infty,\infty]$ are given, then we write
\begin{equation}\label{umu}\langle\boldsymbol{u},\boldsymbol{\mu}\rangle:=\sum_{i\in I}\,\langle u_i,\mu^i\rangle=\sum_{i\in I}\,\int u_i\,d\mu^i.\end{equation}

Being the intersection of an open and a closed subset of $X$,
each $A_i$, $i\in I$, is universally measurable, and hence $\mathfrak M^+(A_i;X)$ consists of all the restrictions $\mu|_{A_i}$ where $\mu$ ranges over $\mathfrak M^+(X)$. On the other hand, according to \cite[Chapter~I, Section~9, Proposition~13]{B1}, $A_i$ itself can be thought of as a locally compact subspace of $X$. Thus $\mathfrak M^+(A_i;X)$ consists, in fact, of all those $\nu\in\mathfrak M^+(A_i)$ for each of which there exists $\widehat{\nu}\in\mathfrak M^+(X)$ with the property
\begin{equation}\label{extend}\widehat{\nu}(\varphi)=\langle1_{A_i}\varphi,\nu\rangle\text{ \ for every \ }\varphi\in C_0(X).\end{equation}
We say that such $\widehat{\nu}$ {\it extends\/} $\nu\in\mathfrak M^+(A_i)$ by $0$ off $A_i$ to all of $X$.
A sufficient condition for (\ref{extend}) to hold is that $\nu$ be bounded.

Since $A^+\cap A^-=\varnothing$, there corresponds to each $\boldsymbol\mu\in\mathfrak M^+(\mathbf A;X)$ a ({\it signed\/}) scalar Radon measure $R_{\mathbf A}\boldsymbol\mu:=\sum_{i\in I}\,s_i\mu^i\in\mathfrak M(X)$, the 'resultant' of $\boldsymbol\mu$, whose positive and negative parts in the Hahn--Jordan decomposition are given by
\begin{equation}\label{defR}R_{\mathbf A}\boldsymbol\mu^+=\sum_{i\in I^+}\,\mu^i\text{ \ and \ }R_{\mathbf A}\boldsymbol\mu^-=\sum_{j\in I^-}\,\mu^j.\end{equation}
For the sake of brevity we shall use the short notation $R$ instead of $R_{\mathbf A}$ if this will not cause any misunderstanding.

The mapping $\mathfrak M^+(\mathbf A;X)\ni\boldsymbol\mu\mapsto R\boldsymbol\mu\in\mathfrak M(X)$ is in general non-injective. We shall call $\boldsymbol\mu,\boldsymbol\mu_1\in\mathfrak M^+(\mathbf A;X)$ {\it $R$-equivalent\/}
if $R\boldsymbol\mu=R\boldsymbol\mu_1$. Note that the relation of $R$-equ\-iv\-alence on $\mathfrak M^+(\mathbf A;X)$ is that of identity if and only if all the $A_i$, $i\in I$, are mutually disjoint. Also observe that $\boldsymbol\mu\in\mathfrak M^+(\mathbf A;X)$ is $R$-equ\-iv\-alent to $\boldsymbol 0$ (if and) only if $\boldsymbol\mu=\boldsymbol 0$.

\subsection{A (semi)metric structure on classes of vector measures}\label{sec-Metric} For a given (strictly positive definite) kernel $\kappa$ on $X$ and a given (generalized) condenser $\mathbf A$, let $\mathcal E^+_\kappa(\mathbf A;X)$ consist of all $\boldsymbol{\mu}\in\mathfrak M^+(\mathbf A;X)$ such that $\kappa(\mu^i,\mu^i)<\infty$ for all $i\in I$; in other words,
\[\mathcal E^+_\kappa(\mathbf A;X):=\prod_{i\in I}\,\mathcal E^+_\kappa(A_i;X).\]
In view of \cite[Lemma~2.3.1]{F1}, we see from (\ref{nonzero'}) that $\mathcal E^+_\kappa(\mathbf A;X)\ne\{\mathbf 0\}$.

In accordance with an electrostatic interpretation of a condenser, we say that the interaction between the components $\mu^i$, $i\in I$, of $\boldsymbol\mu\in\mathcal E^+_\kappa(\mathbf A;X)$ is characterized by the matrix $(s_is_j)_{i,j\in I}$. Given $\boldsymbol\mu,\boldsymbol\mu_1\in\mathcal E^+_\kappa(\mathbf A;X)$, we define the {\it mutual energy\/}
\begin{equation}\label{env}\kappa(\boldsymbol\mu,\boldsymbol\mu_1):=\sum_{i,j\in I}\,s_is_j\kappa(\mu^i,\mu_1^j)\end{equation}
and the {\it vector potential\/} $\kappa^{\boldsymbol\mu}=(\kappa^{\boldsymbol\mu,i})_{i\in I}$ where
\begin{equation}\label{potv}\kappa^{\boldsymbol\mu,i}(x):=\sum_{j\in I}\,s_is_j\kappa(x,\mu^j),\quad x\in X.\end{equation}

An assertion $\mathcal U(x)$ involving a variable point $x\in X$ is said to hold {\it $c_\kappa$-n.e.}\ on $Q\subset X$ if $c_\kappa(N)=0$ where $N$ consists of all $x\in Q$ for which $\mathcal U(x)$ fails to hold.

\begin{lemma}\label{l-Rpot} For any\/ $\boldsymbol\mu\in\mathcal E_\kappa^+(\mathbf A;X)$ all the\/ $\kappa^{\boldsymbol\mu,i}$, $i\in I$, are well defined and finite\/ $c_\kappa$-n.e.\ on\/ $X$. Moreover,
\begin{equation}\label{repp}\kappa^{\boldsymbol\mu,i}(\cdot)=s_i\kappa(\cdot,R\boldsymbol\mu)\text{ \ $c_\kappa$-n.e.~on \ }X.\end{equation}
\end{lemma}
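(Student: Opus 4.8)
The plan is to reduce both assertions to a single potential-theoretic fact combined with routine bookkeeping. The fact needed is: \emph{for every $\nu\in\mathcal E_\kappa^+(X)$ the potential $\kappa(\cdot,\nu)$ is finite $c_\kappa$-n.e.\ on $X$.} I would either quote this from \cite{F1} or derive it as follows. By Lemma~\ref{lemma-semi} the function $\kappa(\cdot,\nu)$ is l.s.c., so $N_\nu:=\{x\in X:\ \kappa(x,\nu)=+\infty\}=\bigcap_{k\in\mathbb N}\{x:\ \kappa(x,\nu)>k\}$ is a $G_\delta$, hence Borel. If $c_\kappa(N_\nu)>0$, then by \cite[Lemma~2.3.1]{F1} there is $\sigma\in\mathcal E_\kappa^+(X)$ with $\sigma(N_\nu)>0$; but then, by Tonelli's theorem for the nonnegative kernel, $\kappa(\sigma,\nu)=\int\kappa(x,\nu)\,d\sigma(x)=+\infty$, which contradicts the fact that $\kappa(\sigma,\nu)$ is a finite real number, $\mathcal E_\kappa(X)$ being a pre-Hilbert space. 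Hence $c_\kappa(N_\nu)=0$.

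Granting this, the \emph{first} assertion follows at once. Since $\boldsymbol\mu\in\mathcal E_\kappa^+(\mathbf A;X)=\prod_{i\in I}\mathcal E_\kappa^+(A_i;X)$ and $\mathcal E_\kappa^+(A_i;X)\subset\mathcal E_\kappa^+(X)$, each of the finitely many Borel sets $N_i:=\{x\in X:\ \kappa(x,\mu^i)=+\infty\}$, $i\in I$, has $c_\kappa(N_i)=0$; writing $N:=\bigcup_{i\in I}N_i$, every $\sigma\in\mathcal E_\kappa^+(X)$ then satisfies $\sigma(N_i)=0$ (by \cite[Lemma~2.3.1]{F1}, the $N_i$ being $\sigma$-measurable), hence $\sigma(N)=0$, and applying \cite[Lemma~2.3.1]{F1} once more gives $c_\kappa(N)=0$. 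For $x\in X\setminus N$ all the numbers $\kappa(x,\mu^j)$, $j\in I$, are finite, so by (\ref{potv}) the value $\kappa^{\boldsymbol\mu,i}(x)=\sum_{j\in I}s_is_j\kappa(x,\mu^j)$ is a finite sum of finite reals --- well defined and finite. As the exceptional set is contained in $N$, this is the first claim.

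The identity (\ref{repp}) then follows from the Hahn--Jordan description (\ref{defR}), namely $R\boldsymbol\mu^+=\sum_{j\in I^+}\mu^j$ and $R\boldsymbol\mu^-=\sum_{j\in I^-}\mu^j$. Since the upper integral of a nonnegative function is additive with respect to finite sums of positive measures, for $x\in X\setminus N$ both $\kappa(x,R\boldsymbol\mu^\pm)=\sum_{j\in I^\pm}\kappa(x,\mu^j)$ are finite; hence $\kappa(x,R\boldsymbol\mu)$ is well defined there and
\[\kappa(x,R\boldsymbol\mu)=\kappa(x,R\boldsymbol\mu^+)-\kappa(x,R\boldsymbol\mu^-)=\sum_{j\in I}s_j\kappa(x,\mu^j).\]
Multiplying by $s_i$ and comparing with (\ref{potv}) yields $s_i\kappa(x,R\boldsymbol\mu)=\sum_{j\in I}s_is_j\kappa(x,\mu^j)=\kappa^{\boldsymbol\mu,i}(x)$ for every $x\in X\setminus N$, that is, $c_\kappa$-n.e.\ on $X$, as required.

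I expect the only genuinely non-routine ingredient to be the n.e.-finiteness of potentials of positive measures of finite energy; this is where a (short, classical) potential-theoretic argument is needed, and I would decide between simply citing \cite{F1} and including the derivation above. Everything else --- the Borel-measurability of the polar sets $N_i$, the passage from ``$\sigma$-null for all $\sigma\in\mathcal E_\kappa^+(X)$'' to ``of zero inner capacity'', and the distributivity of the potential over the two Hahn--Jordan parts of $R\boldsymbol\mu$ --- is bookkeeping with the definitions and with (\ref{defR}), (\ref{potv}).
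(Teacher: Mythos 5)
Your proposal is correct and follows essentially the same route as the paper: reduce everything to the $c_\kappa$-n.e.\ finiteness of each $\kappa(\cdot,\mu^i)$ (which the paper simply cites from \cite[p.~164]{F1}, while you also supply the short classical derivation), combine the finitely many exceptional sets, and then obtain (\ref{repp}) by direct computation from (\ref{defR}) and (\ref{potv}). The only cosmetic difference is that the paper disposes of the union of the exceptional sets via the subadditivity of $c_\kappa$ on universally measurable sets \cite[Lemma~2.3.5]{F1}, whereas you pass through the characterization of capacity-zero sets in \cite[Lemma~2.3.1]{F1}; both are valid.
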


\begin{proof}Since $\mu^i\in\mathcal E_\kappa^+(X)$ for every $i\in I$, $\kappa(\cdot,\mu^i)$ is finite $c_\kappa$-n.e.\ on $X$ \cite[p.~164]{F1}. Furthermore, the set of all $x\in X$ with $\kappa(x,\mu^i)=\infty$ is universally measurable, for $\kappa(\cdot,\mu^i)$ is l.s.c.\ on $X$. Combined with the fact that the inner capacity $c_\kappa(\cdot)$ is subadditive on universally measurable sets \cite[Lemma~2.3.5]{F1}, this implies that $\kappa^{\boldsymbol\mu,i}$ is well defined and finite $c_\kappa$-n.e.\ on $X$. Finally,  (\ref{repp}) is obtained directly from (\ref{defR}) and~(\ref{potv}).\end{proof}

\begin{lemma}\label{l-Ren} For any\/
$\boldsymbol\mu,\boldsymbol\mu_1\in\mathcal E^+_\kappa(\mathbf A;X)$ we have
\begin{equation}\label{Re}\kappa(\boldsymbol\mu,\boldsymbol\mu_1)=\kappa(R\boldsymbol\mu,R\boldsymbol\mu_1)\in(-\infty,\infty).\end{equation}\end{lemma}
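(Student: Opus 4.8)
\textbf{Proof plan for Lemma~\ref{l-Ren}.}

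The plan is to reduce the identity $\kappa(\boldsymbol\mu,\boldsymbol\mu_1)=\kappa(R\boldsymbol\mu,R\boldsymbol\mu_1)$ to a purely formal rearrangement of the defining double sums, the only real work being to check that every scalar mutual energy $\kappa(\mu^i,\mu_1^j)$ appearing in~(\ref{env}) is a well-defined finite real number, so that we are free to regroup the terms. First I would observe that, since $\boldsymbol\mu,\boldsymbol\mu_1\in\mathcal E_\kappa^+(\mathbf A;X)$, each component lies in $\mathcal E_\kappa^+(X)$; by the Cauchy--Schwarz inequality in the pre-Hilbert space $\mathcal E_\kappa(X)$ (valid because $\kappa$ satisfies the energy principle, so $\kappa(\cdot,\cdot)$ is an inner product on $\mathcal E_\kappa(X)$), one gets
\[
|\kappa(\mu^i,\mu_1^j)|\leqslant\|\mu^i\|_\kappa\,\|\mu_1^j\|_\kappa<\infty\quad\text{for all }i,j\in I.
\]
Hence the sum in~(\ref{env}) is a finite sum of finite real numbers, which already proves $\kappa(\boldsymbol\mu,\boldsymbol\mu_1)\in(-\infty,\infty)$.

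Next I would compute the right-hand side directly from the Hahn--Jordan decomposition~(\ref{defR}) of $R\boldsymbol\mu$ and $R\boldsymbol\mu_1$. Since $R\boldsymbol\mu^+=\sum_{i\in I^+}\mu^i$ and $R\boldsymbol\mu^-=\sum_{j\in I^-}\mu^j$ are finite sums of measures of finite energy, both $R\boldsymbol\mu$ and $R\boldsymbol\mu_1$ belong to $\mathcal E_\kappa(X)$ (using the bilinearity of $\kappa$ on $\mathcal E_\kappa(X)$ and finiteness of each cross term as above), so $\kappa(R\boldsymbol\mu,R\boldsymbol\mu_1)$ is well defined. Then bilinearity gives
\[
\kappa(R\boldsymbol\mu,R\boldsymbol\mu_1)=\kappa\Bigl(\sum_{i\in I}s_i\mu^i,\ \sum_{j\in I}s_j\mu_1^j\Bigr)=\sum_{i,j\in I}s_is_j\,\kappa(\mu^i,\mu_1^j),
\]
which is exactly the right-hand side of~(\ref{env}). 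Comparing, $\kappa(\boldsymbol\mu,\boldsymbol\mu_1)=\kappa(R\boldsymbol\mu,R\boldsymbol\mu_1)$, as claimed.

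The only point that requires a little care — and the step I would flag as the main obstacle — is the justification of the bilinear expansion of $\kappa$ on $\mathcal E_\kappa(X)$ for \emph{signed} measures, i.e.\ that $\kappa(\nu,\nu')=\sum_k\sum_l\kappa(\nu_k,\nu_l')$ whenever $\nu=\sum_k\nu_k$ and $\nu'=\sum_l\nu_l'$ are finite decompositions into measures of finite energy. This is not quite the definition of mutual energy (which is given via upper integrals and the Hahn--Jordan decomposition), so one should invoke the standard fact — recorded in the excerpt's reference \cite{F1} and already used implicitly to make $\mathcal E_\kappa(X)$ a pre-Hilbert space — that on $\mathcal E_\kappa(X)$ the mutual energy is a genuine symmetric bilinear form; alternatively one expands $\kappa(R\boldsymbol\mu,R\boldsymbol\mu_1)$ term by term by splitting each resultant into its $I^+$ and $I^-$ parts and using the finiteness of $\kappa(\mu^i,\mu_1^j)$ to guarantee that no $\infty-\infty$ ambiguity arises when recombining. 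Once this bilinearity is in hand, the identity is immediate.
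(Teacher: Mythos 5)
Your proposal is correct and follows essentially the same route as the paper, whose proof is just the one-line remark that (\ref{Re}) is obtained directly from (\ref{defR}) and (\ref{env}); your Cauchy--Schwarz finiteness check and the bilinear expansion of $\kappa(R\boldsymbol\mu,R\boldsymbol\mu_1)$ are exactly the details the paper leaves implicit.
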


\begin{proof} This is obtained directly from (\ref{defR}) and (\ref{env}).\end{proof}

For $\boldsymbol\mu=\boldsymbol\mu_1\in\mathcal E^+_\kappa(\mathbf A;X)$ the mutual energy $\kappa(\boldsymbol\mu,\boldsymbol\mu_1)$ becomes the {\it energy\/} $\kappa(\boldsymbol\mu,\boldsymbol\mu)$ of $\boldsymbol\mu$. Because of the strict positive definiteness of the kernel $\kappa$, we have from (\ref{Re})
\begin{equation}\label{enpos}\kappa(\boldsymbol\mu,\boldsymbol\mu)=\kappa(R\boldsymbol\mu,R\boldsymbol\mu)\in[0,\infty)\text{ \ for all \ }\boldsymbol\mu\in\mathcal E^+_\kappa(\mathbf A;X),\end{equation}
where $\kappa(\boldsymbol\mu,\boldsymbol\mu)=0$ if and only if $\boldsymbol\mu=\mathbf 0$.

In order to introduce a (semi)metric structure on $\mathcal E_\kappa^+(\mathbf A;X)$, we define
\begin{equation}\label{isom}\|\boldsymbol{\mu}-\boldsymbol{\mu}_1\|_{\mathcal E^+_\kappa(\mathbf A;X)}:=\|R\boldsymbol{\mu}-R\boldsymbol{\mu}_1\|_\kappa.\end{equation}
Based on (\ref{Re}), we see by straightforward calculation that, in fact,
\begin{equation}\label{metr-def}\|\boldsymbol{\mu}-\boldsymbol{\mu}_1\|^2_{\mathcal E^+_\kappa(\mathbf A;X)}=\sum_{i,j\in I}\,s_is_j\kappa(\mu^i-\mu_1^i,\mu^j-\mu_1^j).\end{equation}

\begin{theorem}\label{lemma:semimetric} $\mathcal E^+_\kappa(\mathbf{A};X)$ is a semimetric space with
the semimetric defined by either of the\/ {\rm(}equivalent\/{\rm)} relations\/ {\rm(\ref{isom})} or\/ {\rm(\ref{metr-def})}, and this
space is isometric to its\/ $R$-image in\/ $\mathcal E_\kappa(X)$. This semimetric is a metric if and only if all the\/ $A_i$, $i\in I$, are mutually essentially disjoint, i.e.\ with\/ $c_\kappa(A_i\cap A_j)=0$ for all\/ $i\ne j$.
\end{theorem}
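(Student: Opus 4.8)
The plan is to establish the three assertions of Theorem~\ref{lemma:semimetric} in turn: that \eqref{isom} and \eqref{metr-def} define the same functional; that this functional is a semimetric making the $R$-map an isometry onto its image in $\mathcal E_\kappa(X)$; and finally the metric criterion. For the equivalence of \eqref{isom} and \eqref{metr-def}, I would simply expand $R\boldsymbol\mu-R\boldsymbol\mu_1=\sum_{i\in I}s_i(\mu^i-\mu_1^i)$ and apply bilinearity of $\kappa(\cdot,\cdot)$ exactly as in the proof of Lemma~\ref{l-Ren}; all the energies involved are finite because $\mu^i,\mu_1^i\in\mathcal E_\kappa^+(X)$ and $\mathcal E_\kappa(X)$ is a (pre-Hilbert) vector space, so $\mu^i-\mu_1^i\in\mathcal E_\kappa(X)$, and no delicate convergence issue arises here. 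This already shows the right-hand sides of \eqref{isom} and \eqref{metr-def} agree.

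For the semimetric property, the key observation is that $R$ maps $\mathcal E_\kappa^+(\mathbf A;X)$ into the genuine pre-Hilbert space $\mathcal E_\kappa(X)$ (via $\mu^i-\mu_1^i\in\mathcal E_\kappa(X)$ and closure under finite sums), and \eqref{isom} says precisely that the semidistance on $\mathcal E_\kappa^+(\mathbf A;X)$ is the pullback under $R$ of the genuine norm-distance $\|\cdot\|_\kappa$ on $\mathcal E_\kappa(X)$. Hence nonnegativity, symmetry, and the triangle inequality are inherited verbatim from $\|\cdot\|_\kappa$, and $R$ is by construction an isometry of $\mathcal E_\kappa^+(\mathbf A;X)$ onto $R\bigl(\mathcal E_\kappa^+(\mathbf A;X)\bigr)\subset\mathcal E_\kappa(X)$. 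The only point that requires a word is well-definedness (finiteness) of $\|R\boldsymbol\mu-R\boldsymbol\mu_1\|_\kappa$, which follows since $R\boldsymbol\mu-R\boldsymbol\mu_1\in\mathcal E_\kappa(X)$.

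For the metric criterion: a semimetric fails to be a metric exactly when $\|\boldsymbol\mu-\boldsymbol\mu_1\|_{\mathcal E_\kappa^+(\mathbf A;X)}=0$ for some $\boldsymbol\mu\ne\boldsymbol\mu_1$, i.e., by \eqref{isom} and the strict positive definiteness of $\kappa$, when $R\boldsymbol\mu=R\boldsymbol\mu_1$ for some pair of distinct $R$-equivalent vector measures in $\mathcal E_\kappa^+(\mathbf A;X)$. So I must show: there exist distinct $R$-equivalent $\boldsymbol\mu,\boldsymbol\mu_1\in\mathcal E_\kappa^+(\mathbf A;X)$ if and only if $c_\kappa(A_i\cap A_j)>0$ for some $i\ne j$. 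The ``if'' direction is the substantive part and the place I expect the main (mild) obstacle: given $i\ne j$ with $c_\kappa(A_i\cap A_j)>0$, note $i$ and $j$ must be equally signed (since oppositely signed plates are disjoint), pick by \eqref{compact} a compact $K\subset A_i\cap A_j$ with $0<c_\kappa(K)<\infty$, let $\omega$ be its $\kappa$-capacitary measure (which lies in $\mathcal E_\kappa^+(K;X)\subset\mathcal E_\kappa^+(A_i;X)\cap\mathcal E_\kappa^+(A_j;X)$ by Remark~\ref{remark}), and build two vector measures that agree in all coordinates except that one puts an extra copy of $\varepsilon\omega$ in coordinate $i$ and the other in coordinate $j$ — since $s_i=s_j$, these have the same resultant, are distinct, and have finite energy. (If the components $\mu^k$ must satisfy prescribed masses, one subtracts $\varepsilon\omega$ correspondingly, which is harmless as $\omega$ is bounded; but at the level of $\mathcal E_\kappa^+(\mathbf A;X)$ no normalization is imposed, so the simplest construction suffices.) Conversely, if all $A_i\cap A_j$ ($i\ne j$) have zero capacity, then for any $\boldsymbol\mu\in\mathcal E_\kappa^+(\mathbf A;X)$ each $\mu^i$ is $c_\kappa$-absolutely continuous, so $\mu^i$ gives zero mass to every $A_i\cap A_j$; a standard disjointification argument then recovers each $\mu^i$ from $R\boldsymbol\mu^\pm$ by restriction to the essentially disjoint pieces, forcing $R\boldsymbol\mu=R\boldsymbol\mu_1\Rightarrow\boldsymbol\mu=\boldsymbol\mu_1$, i.e., the semimetric is a metric. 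This matches the remark already made in Section~\ref{sec:cond} that $R$-equivalence is trivial precisely when the plates are (essentially) disjoint.
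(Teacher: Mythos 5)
Your proposal is correct and follows essentially the same route as the paper: the semimetric and isometry assertions are read off from (\ref{isom}) together with the pre-Hilbert structure of $\mathcal E_\kappa(X)$, sufficiency of essential disjointness rests on the $c_\kappa$-absolute continuity of positive measures of finite energy, and necessity is obtained by transplanting a nonzero finite-energy measure carried by $A_i\cap A_j$ between the two (necessarily equally signed) coordinates so as to produce distinct $R$-equivalent vector measures. One small repair: the $\kappa$-capacitary measure of $K$ is guaranteed to exist only for a \emph{perfect} kernel (Remark~\ref{remark}), which is not assumed in this theorem, but all you need is some nonzero $\omega\in\mathcal E_\kappa^+(K;X)$, which follows directly from $c_\kappa(K)>0$ --- exactly as the paper does by citing \cite[Lemma~2.3.1]{F1} to produce a nonzero $\tau\in\mathcal E_\kappa^+(A_i\cap A_j;X)$.
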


\begin{proof}The former assertion is obvious by (\ref{isom}). Since a nonzero positive scalar measure of finite energy does not charge any set of zero capacity \cite[Lemma~2.3.1]{F1}, the sufficiency part of the latter assertion lemma holds. To prove the necessity part, assume on the contrary that there are two equally signed plates $A_k$ and $A_\ell$, $k\ne\ell$, with $c_\kappa(A_k\cap A_\ell)>0$. By \cite[Lemma~2.3.1]{F1}, there is a nonzero measure $\tau\in\mathcal E_\kappa^+(A_k\cap A_\ell;X)$. Choose $\boldsymbol\mu=(\mu^i)_{i\in I}\in\mathcal E^+_\kappa(\mathbf A;X)$ such that $\mu^k|_{A_k\cap A_\ell}-\tau\geqslant0$, and define $\boldsymbol\mu_m=(\mu_m^i)_{i\in I}\in\mathcal E^+_\kappa(\mathbf A;X)$, $m=1,2$, where $\mu_1^k=\mu^k-\tau$ and $\mu_1^i=\mu^i$ for all $i\ne k$, while $\mu_2^\ell=\mu^\ell+\tau$ and $\mu_2^i=\mu^i$ for all $i\ne\ell$. Then $R\boldsymbol\mu_1=R\boldsymbol\mu_2$, hence $\|\boldsymbol{\mu}_1-\boldsymbol{\mu}_2\|_{\mathcal E^+_\kappa(\mathbf A;X)}=0$, though $\boldsymbol\mu_1\ne\boldsymbol\mu_2$.\end{proof}

Similarly to the terminology for the pre-Hilbert space $\mathcal E_\kappa(X)$, the topology of the semimetric space $\mathcal E^+_\kappa(\mathbf A;X)$ is termed {\it strong\/}. We say that $\boldsymbol{\mu},\boldsymbol{\mu}_1\in\mathcal E^+_\kappa(\mathbf A;X)$ are {\it equivalent in\/} $\mathcal E^+_\kappa(\mathbf A;X)$ if $\|\boldsymbol{\mu}-\boldsymbol{\mu}_1\|_{\mathcal E^+_\kappa(\mathbf A;X)}=0$, or equivalently if $R\boldsymbol{\mu}=R\boldsymbol{\mu}_1$.

\subsection{The vague topology on $\mathfrak M^+(\mathbf A;X)$}\label{sec:vague} In Section~\ref{sec:vague} we consider a {\it standard\/} condenser $\mathbf A$ (see Definition~\ref{def-cond-st}). The set of all (vector) measures associated with $\mathbf A$ can be endowed with the {\it vague\/} topology defined as follows.

\begin{definition}\label{def-vague}
The {\it vague topology\/} on $\mathfrak M^+(\mathbf A;X)$ is the topology of the product space $\prod_{i\in I}\,\mathfrak M^+(A_i;X)$ where each of the $\mathfrak M^+(A_i;X)$ is endowed with the vague topology induced from $\mathfrak M(X)$. Namely, a sequence $\{\boldsymbol\mu_k\}_{k\in\mathbb N}\subset\mathfrak M^+(\mathbf A;X)$ converges to $\boldsymbol\mu\in\mathfrak M^+(\mathbf A;X)$ {\it vaguely\/} if for every $i\in I$, $\mu_k^i\to\mu^i$ vaguely in $\mathfrak M(X)$ as $k\to\infty$.\end{definition}

Since all the $A_i$, $i\in I$, are closed in $X$, $\mathfrak M^+(\mathbf A;X)$ is {\it vaguely closed in\/} $\mathfrak M(X)^p$. Besides, since every $\mathfrak M^+(A_i;X)$ is Hausdorff in the vague topology, so is $\mathfrak M^+(\mathbf A;X)$ \cite[Chapter~I, Section~8, Proposition~7]{B1}. Hence, a va\-gue limit of any sequence in $\mathfrak M^+(\mathbf A;X)$ {\it belongs to\/ $\mathfrak M^+(\mathbf A;X)$ and is unique\/} (whenever it exists). We call a set $\mathfrak F\subset\mathfrak M^+(\mathbf A;X)$ {\it vaguely bounded\/} if for every $\varphi\in C_0(X)$,
\[\sup_{\boldsymbol{\mu}\in\mathfrak F}\,|\mu^i(\varphi)|<\infty\text{ \ for all \ }i\in I.\]

\begin{lemma}\label{l:vague:c} A vaguely bounded set\/ $\mathfrak F\subset\mathfrak M^+(\mathbf A;X)$ is vaguely relatively
compact.\end{lemma}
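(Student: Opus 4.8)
The plan is to reduce the statement about the product space $\mathfrak M^+(\mathbf A;X)$ to the classical vague relative compactness criterion for scalar positive Radon measures on a locally compact space. Recall that, by the scalar version of this result (which follows from the Banach--Alaoglu theorem applied to the dual of $C_0(X)$, or see \cite[Chapter~III, Section~1, n$^\circ$\,9]{Bou}), a set $\mathfrak G\subset\mathfrak M^+(X)$ that is vaguely bounded---meaning $\sup_{\nu\in\mathfrak G}|\nu(\varphi)|<\infty$ for every $\varphi\in C_0(X)$---is vaguely relatively compact in $\mathfrak M^+(X)$. Since $X$ is countable at infinity and metrizable, the vague topology on $\mathfrak M^+(X)$ satisfies the first axiom of countability, so relative compactness is equivalent to: every sequence has a vaguely convergent subsequence with limit in $\mathfrak M^+(X)$.

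First I would fix a vaguely bounded set $\mathfrak F\subset\mathfrak M^+(\mathbf A;X)$ and an arbitrary sequence $\{\boldsymbol\mu_k\}_{k\in\mathbb N}\subset\mathfrak F$, writing $\boldsymbol\mu_k=(\mu_k^i)_{i\in I}$. For each fixed $i\in I$, the definition of vague boundedness of $\mathfrak F$ gives $\sup_k|\mu_k^i(\varphi)|<\infty$ for every $\varphi\in C_0(X)$, so the sequence $\{\mu_k^i\}_{k\in\mathbb N}\subset\mathfrak M^+(A_i;X)\subset\mathfrak M^+(X)$ is vaguely bounded in $\mathfrak M^+(X)$. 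By the scalar criterion just recalled, it has a vaguely convergent subsequence in $\mathfrak M^+(X)$. Since $A_i$ is closed in $X$ (here I use that $\mathbf A$ is a \emph{standard} condenser, as assumed throughout Section~\ref{sec:vague}) and $\mathfrak M^+(A_i;X)$ is vaguely closed in $\mathfrak M^+(X)$, the limit again lies in $\mathfrak M^+(A_i;X)$.

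Next I would carry out a diagonal extraction: since $I$ is finite with $p={\rm Card}\,I$ elements, a finite iteration of passing to subsequences produces a single subsequence $\{\boldsymbol\mu_{k_\ell}\}_{\ell\in\mathbb N}$ along which $\mu_{k_\ell}^i\to\mu^i$ vaguely in $\mathfrak M(X)$ as $\ell\to\infty$ for \emph{every} $i\in I$ simultaneously, with each $\mu^i\in\mathfrak M^+(A_i;X)$. Setting $\boldsymbol\mu:=(\mu^i)_{i\in I}$, we have $\boldsymbol\mu\in\mathfrak M^+(\mathbf A;X)$ and, by Definition~\ref{def-vague}, $\boldsymbol\mu_{k_\ell}\to\boldsymbol\mu$ vaguely in $\mathfrak M^+(\mathbf A;X)$. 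As the sequence in $\mathfrak F$ was arbitrary, every sequence in $\mathfrak F$ admits a vaguely convergent subsequence with limit in $\mathfrak M^+(\mathbf A;X)$, which by first countability of the vague topology on the product $\prod_{i\in I}\mathfrak M^+(A_i;X)$ is precisely the assertion that $\mathfrak F$ is vaguely relatively compact.

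The only genuine input is the scalar vague compactness theorem; everything else---vague closedness of $\mathfrak M^+(A_i;X)$ coming from closedness of the plates, the finite diagonal argument, and the reduction of relative compactness to sequential compactness via first countability---is routine. Accordingly I do not anticipate a serious obstacle; the one point requiring a word of care is that vague relative compactness in a product space is taken componentwise, so that the finite-product structure of $\mathfrak M^+(\mathbf A;X)$ is exactly what makes the diagonal extraction terminate.
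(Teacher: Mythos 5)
Your proof is correct and follows essentially the same route as the paper's: both reduce to the scalar vague compactness criterion applied to each coordinate projection of $\mathfrak F$, the paper then finishing with Tychonoff's theorem on the (finite) product where you instead assemble the product compactness by a finite diagonal extraction of subsequences. The one point needing a word of care is your appeal to first countability alone to identify relative compactness with sequential relative compactness---strictly that equivalence uses the metrizability of the vague topology on vaguely bounded sets (available here since $X$ is metrizable and countable at infinity), but this is harmless in the present setting.
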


\begin{proof}It is clear from the above definition that for every $i\in I$ the set
\[\mathfrak F^i:=\bigl\{\mu^i\in\mathfrak M^+(A_i;X):  \ \boldsymbol\mu=(\mu^j)_{j\in I}\in\mathfrak F\bigr\}\]
is vaguely bounded in $\mathfrak M^+(X)$; hence, by \cite[Chapitre~III, Section~2, Proposition~9]{B2}, $\mathfrak F^i$ is vaguely relatively compact in $\mathfrak M(X)$.
As $\mathfrak F\subset\prod_{i\in I}\,\mathfrak F^i$,
the lemma follows from Tychonoff's theorem on the product of compact spaces \cite[Chapter~I, Section~9, Theorem~3]{B1}.\end{proof}

\subsection{An unconstrained weighted minimum energy problem for vector measures}\label{sec:unc} Let a (generalized) condenser $\mathbf A=(A_i)_{i\in I}$ and a (strictly positive definite) kernel $\kappa$ on $X$ be given. Fix a vector-valued function $\mathbf{f}=(f_i)_{i\in I}$, where each $f_i:X\to[-\infty,\infty]$ is $\mu$-meas\-urable for every $\mu\in\mathcal E_\kappa^+(A_i;X)$ and $f_i$ is treated as an {\it external field\/} acting on the charges (measures) from $\mathcal E^+_\kappa(A_i;X)$. The {\it $\mathbf f$-weigh\-ted vector potential\/} and
the {\it $\mathbf f$-weigh\-ted energy\/} of $\boldsymbol{\mu}\in\mathcal E_\kappa^+(\mathbf A;X)$ are given by
\begin{align}\label{wpot'}\mathbf W_{\kappa,\mathbf{f}}^{\boldsymbol{\mu}}&:=\kappa^{\boldsymbol{\mu}}+\mathbf f,\\
\label{wen'}G_{\kappa,\mathbf{f}}(\boldsymbol{\mu})&:=\kappa(\boldsymbol{\mu},\boldsymbol{\mu})+2\langle\mathbf{f},\boldsymbol{\mu}\rangle,\end{align}
respectively.\footnote{$G_{\kappa,\mathbf{f}}(\mathbf{\cdot})$ is also known as the {\it Gauss functional\/} (see e.g.~\cite{O}; compare with~\cite{Gauss}). Note that when defining $G_{\kappa,\mathbf{f}}(\mathbf{\cdot})$, we have used the notation~(\ref{umu}).} Thus $\mathbf W_{\kappa,\mathbf{f}}^{\boldsymbol{\mu}}=\bigl(W_{\kappa,\mathbf{f}}^{\boldsymbol{\mu},i}\bigr)_{i\in I}$, where $W_{\kappa,\mathbf{f}}^{\boldsymbol{\mu},i}:=\kappa^{\boldsymbol\mu,i}+f_i$ (see (\ref{potv})).
Let $\mathcal E_{\kappa,\mathbf f}^+(\mathbf A;X)$ consist of all $\boldsymbol{\mu}\in\mathcal E_\kappa^+(\mathbf A;X)$ with finite $G_{\kappa,\mathbf{f}}(\boldsymbol{\mu})$, or equivalently with finite $\langle\mathbf{f},\boldsymbol{\mu}\rangle$.

\begin{lemma}\label{aux43}Suppose that a set\/ $\mathfrak E\subset\mathcal E_{\kappa,\mathbf f}^+(\mathbf A;X)$ is convex. Then there exists\/ $\boldsymbol\lambda\in\mathfrak E$ with
\begin{equation}\label{convex}G_{\kappa,\mathbf{f}}(\boldsymbol\lambda)=\min_{\boldsymbol\mu\in\mathfrak E}\,G_{\kappa,\mathbf{f}}(\boldsymbol{\mu})\end{equation}
if and only if
\begin{equation}\label{aux431}\sum_{i\in I}\,\bigl\langle W^{\boldsymbol\lambda,i}_{\kappa,\mathbf f},\mu^i-\lambda^i\bigr\rangle\geqslant0\text{ \ for all \ } \boldsymbol\mu\in\mathfrak E.\end{equation}
\end{lemma}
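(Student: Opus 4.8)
The plan is to recognize this as a standard variational (first-order optimality) characterization of a minimizer of a quadratic functional over a convex set, and to exploit the convexity of $\mathfrak E$ by testing along line segments. First I would fix $\boldsymbol\lambda\in\mathfrak E$ and an arbitrary $\boldsymbol\mu\in\mathfrak E$, and for $t\in[0,1]$ consider the convex combination $\boldsymbol\mu_t:=(1-t)\boldsymbol\lambda+t\boldsymbol\mu$, which lies in $\mathfrak E$ by convexity. Using the bilinearity of $\kappa(\cdot,\cdot)$ on $\mathcal E_\kappa^+(\mathbf A;X)$ (equivalently, expanding via $R$ and using Lemma~\ref{l-Ren}) together with the linearity of $\boldsymbol\nu\mapsto\langle\mathbf f,\boldsymbol\nu\rangle$, one computes
\[
G_{\kappa,\mathbf f}(\boldsymbol\mu_t)=G_{\kappa,\mathbf f}(\boldsymbol\lambda)+2t\Bigl[\kappa(\boldsymbol\lambda,\boldsymbol\mu-\boldsymbol\lambda)+\langle\mathbf f,\boldsymbol\mu-\boldsymbol\lambda\rangle\Bigr]+t^2\kappa(\boldsymbol\mu-\boldsymbol\lambda,\boldsymbol\mu-\boldsymbol\lambda).
\]
Here one must check that all the cross terms are well defined and finite: this follows because $\boldsymbol\lambda,\boldsymbol\mu\in\mathcal E_\kappa^+(\mathbf A;X)$ gives $\kappa(R\boldsymbol\lambda,R\boldsymbol\mu)\in(-\infty,\infty)$ by Lemma~\ref{l-Ren} (applied to $\boldsymbol\mu$ and to $\boldsymbol\lambda$), and $\boldsymbol\lambda,\boldsymbol\mu\in\mathcal E_{\kappa,\mathbf f}^+(\mathbf A;X)$ makes $\langle\mathbf f,\boldsymbol\lambda\rangle$ and $\langle\mathbf f,\boldsymbol\mu\rangle$ finite, hence $\langle\mathbf f,\boldsymbol\mu-\boldsymbol\lambda\rangle$ is finite. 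Next I would observe that the bracketed coefficient of $2t$ equals $\sum_{i\in I}\langle W^{\boldsymbol\lambda,i}_{\kappa,\mathbf f},\mu^i-\lambda^i\rangle$: indeed $\kappa(\boldsymbol\lambda,\boldsymbol\mu-\boldsymbol\lambda)=\sum_{i,j}s_is_j\kappa(\mu^i-\lambda^i,\lambda^j)=\sum_i\langle\kappa^{\boldsymbol\lambda,i},\mu^i-\lambda^i\rangle$ by the definition \eqref{potv} of the vector potential (using Fubini, legitimate for positive measures with the relevant finite mutual energies), and adding $\langle\mathbf f,\boldsymbol\mu-\boldsymbol\lambda\rangle=\sum_i\langle f_i,\mu^i-\lambda^i\rangle$ yields $\sum_i\langle\kappa^{\boldsymbol\lambda,i}+f_i,\mu^i-\lambda^i\rangle$, which is exactly the left-hand side of \eqref{aux431} by \eqref{wpot'}.

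Having this exact quadratic-in-$t$ identity, both implications are immediate. For necessity: if $G_{\kappa,\mathbf f}(\boldsymbol\lambda)=\min_{\mathfrak E}G_{\kappa,\mathbf f}$, then $G_{\kappa,\mathbf f}(\boldsymbol\mu_t)\geqslant G_{\kappa,\mathbf f}(\boldsymbol\lambda)$ for all $t\in(0,1]$, so dividing the identity by $2t>0$ and letting $t\downarrow0$ forces the bracket to be $\geqslant0$, i.e.\ \eqref{aux431}. For sufficiency: if \eqref{aux431} holds, then for every $\boldsymbol\mu\in\mathfrak E$, taking $t=1$ in the identity gives $G_{\kappa,\mathbf f}(\boldsymbol\mu)=G_{\kappa,\mathbf f}(\boldsymbol\lambda)+2\cdot(\text{bracket})+\kappa(\boldsymbol\mu-\boldsymbol\lambda,\boldsymbol\mu-\boldsymbol\lambda)\geqslant G_{\kappa,\mathbf f}(\boldsymbol\lambda)$, because the bracket is $\geqslant0$ and $\kappa(\boldsymbol\mu-\boldsymbol\lambda,\boldsymbol\mu-\boldsymbol\lambda)=\|R\boldsymbol\mu-R\boldsymbol\lambda\|_\kappa^2\geqslant0$ by \eqref{enpos} and the positive definiteness of $\kappa$; hence $\boldsymbol\lambda$ attains the minimum, and $\boldsymbol\lambda\in\mathfrak E$ was assumed, so \eqref{convex} holds.

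I expect the only real point requiring care — the main (mild) obstacle — to be the rigorous justification that the bilinear expansion of $G_{\kappa,\mathbf f}(\boldsymbol\mu_t)$ is valid with all intermediate quantities finite and all interchanges of summation and integration legitimate; this is where $\boldsymbol\lambda,\boldsymbol\mu\in\mathcal E_{\kappa,\mathbf f}^+(\mathbf A;X)$ (finite weighted energy, equivalently finite $\langle\mathbf f,\cdot\rangle$) and Lemma~\ref{l-Ren} are essential, and one should note in passing that the potentials $\kappa^{\boldsymbol\lambda,i}$ are finite $c_\kappa$-n.e.\ (Lemma~\ref{l-Rpot}) so the pairings $\langle W^{\boldsymbol\lambda,i}_{\kappa,\mathbf f},\mu^i-\lambda^i\rangle$ in \eqref{aux431} are meaningful. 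Everything else is the classical convexity argument for quadratic functionals, carried out on the semimetric space $\mathcal E_\kappa^+(\mathbf A;X)$ via its isometric $R$-image in $\mathcal E_\kappa(X)$.
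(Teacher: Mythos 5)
Your proposal is correct and follows essentially the same route as the paper: the authors likewise expand $G_{\kappa,\mathbf f}\bigl(h\boldsymbol\mu+(1-h)\boldsymbol\lambda\bigr)-G_{\kappa,\mathbf f}(\boldsymbol\lambda)$ as $2h\sum_{i\in I}\langle W^{\boldsymbol\lambda,i}_{\kappa,\mathbf f},\mu^i-\lambda^i\rangle+h^2\|\boldsymbol\mu-\boldsymbol\lambda\|^2_{\mathcal E^+_\kappa(\mathbf A;X)}$, let $h\to0$ for necessity, and take $h=1$ for sufficiency. Your added remarks on finiteness of the cross terms and the identification of the first-order coefficient via (\ref{potv}) and (\ref{wpot'}) only make explicit what the paper subsumes under ``by direct calculation.''
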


\begin{proof}By direct calculation, for any $\boldsymbol\mu,\boldsymbol\nu\in\mathfrak E$ and any $h\in(0,1]$ we get
\[G_{\kappa,\mathbf f}\bigl(h\boldsymbol\mu+(1-h)\boldsymbol\nu\bigr)-G_{\kappa,\mathbf f}(\boldsymbol\nu)=2h\sum_{i\in I}\,\bigl\langle W^{\boldsymbol\nu,i}_{\kappa,\mathbf f},\mu^i-\nu^i\bigr\rangle+h^2\|\boldsymbol\mu-\boldsymbol\nu\|^2_{\mathcal E^+_\kappa(\mathbf A;X)}.\]
If $\boldsymbol\nu=\boldsymbol\lambda$ satisfies (\ref{convex}), then the left (hence, also the right) side of this display is ${}\geqslant0$, which leads to (\ref{aux431}) by letting $h\to0$.
Conversely, if (\ref{aux431}) holds, then the preceding formula with $\boldsymbol\nu=\boldsymbol\lambda$ and $h=1$ gives $G_{\kappa,\mathbf f}(\boldsymbol\mu)\geqslant G_{\kappa,\mathbf f}(\boldsymbol\lambda)$  for all $\boldsymbol\mu\in\mathfrak E$, and (\ref{convex}) follows.
\end{proof}

Fix a numerical vector $\mathbf a=(a_i)_{i\in I}$ with $0<a_i<\infty$, $i\in I$, and write
\begin{align*}\mathfrak M^+(\mathbf A,\mathbf a;X)&:=\bigl\{\boldsymbol\mu\in\mathfrak M^+(\mathbf A;X): \ \mu^i(A_i)=a_i\text{ \ for all \ }i\in I\bigr\},\\
\mathcal E^+_\kappa(\mathbf A,\mathbf a;X)&:=\mathcal E^+_\kappa(\mathbf A;X)\cap\mathfrak M^+(\mathbf A,\mathbf a;X),\\
\mathcal E_{\kappa,\mathbf{f}}^+(\mathbf A,\mathbf a;X)&:=\mathcal E_{\kappa,\mathbf f}^+(\mathbf A;X)\cap\mathfrak M^+(\mathbf A,\mathbf a;X).
\end{align*}
If the class $\mathcal E_{\kappa,\mathbf{f}}^+(\mathbf A,\mathbf a;X)$ is nonempty, or equivalently if
\[G_{\kappa,\mathbf{f}}(\mathbf A,\mathbf a;X):=\inf_{\boldsymbol{\mu}\in\mathcal E_{\kappa,\mathbf{f}}^+(\mathbf A,\mathbf a;X)}\,G_{\kappa,\mathbf{f}}(\boldsymbol{\mu})<\infty,\]
then the following {\it unconstrained weighted minimum energy problem\/} makes sense.

\begin{problem}\label{prgen}Given\/ $X$, $\kappa$, $\mathbf A$, $\mathbf a$ and\/ $\mathbf f$, does there exist\/ $\boldsymbol\lambda_{\mathbf A}\in\mathcal E_{\kappa,\mathbf{f}}^+(\mathbf A,\mathbf a;X)$ with\/ \[G_{\kappa,\mathbf{f}}(\boldsymbol\lambda_{\mathbf A})=G_{\kappa,\mathbf{f}}(\mathbf A,\mathbf a;X){\rm?}\]\end{problem}

If $I=I^+=\{1\}$, $A_1$ is closed, $a_1=1$ and $f_1=0$, then Problem~\ref{prgen} reduces to the problem~(\ref{cap-def}) solved in \cite[Theorem~4.1]{F1} (see Remark~\ref{remark} above).

\begin{remark}\label{r-3}{\rm Let $\mathbf A$ be a standard condenser in $X$ and let
\begin{equation}\label{distpos}\sup_{x\in A^+, \ y\in A^-}\,\kappa(x,y)<\infty.\end{equation}
Under these assumptions, in \cite{ZPot2,ZPot3} an approach has been worked out based on both the vague and the strong topologies on $\mathcal E_\kappa^+(\mathbf A;X)$ which made it possible to provide a fairly complete analysis of Problem~\ref{prgen}. In more detail, it was shown that if the kernel $\kappa$ is perfect and if for all $i\in I$ either $f_i\in\Psi(X)$ or $f_i=s_i\kappa(\cdot,\zeta)$ for some (signed) $\zeta\in\mathcal E_\kappa(X)$, then the requirement
\begin{equation}\label{r-suff}c_\kappa(A^+\cup A^-)<\infty\end{equation}
is sufficient for Problem~\ref{prgen} to be solvable for every vector~$\mathbf a$ \cite[Theorem~8.1]{ZPot2}. However, if (\ref{r-suff}) does not hold then in general there exists a vector $\mathbf a'$ such that the problem has {\it no\/} solution (see \cite{ZPot2}).\footnote{In the case of the $\alpha$-Riesz kernels of order $1<\alpha\leqslant2$ on $\mathbb R^3$, some of the (theoretical) results on the solvability or unsolvability of Problem~\ref{prgen} mentioned in \cite{ZPot2} have been illustrated in \cite{HWZ,OWZ} by means of numerical experiments.} Therefore, it was interesting to give a description of the set of all vectors $\mathbf a$ for which Problem~\ref{prgen} is nevertheless solvable. Such a characterization has been established in~\cite{ZPot3} (see also footnote~\ref{foot} below). On the other hand,  if assumption (\ref{distpos}) is omitted, then the approach developed in \cite{ZPot2,ZPot3} breaks down, and (\ref{r-suff}) does not guarantee anymore the existence of a solution to Problem~\ref{prgen}. This has been illustrated by \cite[Theorem~4.6]{DFHSZ2} pertaining to the Newtonian kernel.}\end{remark}

\subsection{A constrained weighted minimum energy problem for vector measures}\label{sec:constr} A measure $\sigma^i\in\mathfrak M^+(A_i;X)$ is said to be a {\it constraint\/} for elements of $\mathfrak M^+(A_i,a_i;X)$ if
$\sigma^i(A_i)>a_i$. Let $\mathfrak C(A_i;X)$ consist of all these $\sigma^i$, and let
\[\mathfrak C(\mathbf A;X):=\prod_{i\in I}\,\mathfrak C(A_i;X).\]
Consider $\boldsymbol\xi=(\xi^i)_{i\in I}$ such that for each $i\in I$ either $\xi^i=\sigma^i\in\mathfrak C(A_i;X)$ or $\xi^i=\infty$, where the formal notation $\xi^i=\infty$ means that {\it no\/} upper constraint on the elements of $\mathfrak M^+(A_i,a_i;X)$ is imposed, and define
\[\mathfrak M^{\boldsymbol\xi}(\mathbf A;X):=\bigl\{\boldsymbol\mu\in\mathfrak M^+(\mathbf A;X): \ \mu^i\leqslant\xi^i\text{ \ for all \ }i\in I\bigr\}.\]
(If $\xi^i=\sigma^i\in\mathfrak C(A_i;X)$, then $\mu^i\leqslant\xi^i$ means that $\xi^i-\mu^i\geqslant0$, while we make the obvious convention that any positive scalar Radon measure is ${}\leqslant\infty$.) Also write
\begin{align*}
\mathfrak M^{\boldsymbol\xi}(\mathbf A,\mathbf a;X)&:=\mathfrak M^+(\mathbf A,\mathbf a;X)\cap\mathfrak M^{\boldsymbol\xi}(\mathbf A;X),\\
\mathcal E^{\boldsymbol\xi}_\kappa(\mathbf A,\mathbf a;X)&:=\mathcal E^+_\kappa(\mathbf A,\mathbf a;X)\cap\mathfrak M^{\boldsymbol\xi}(\mathbf A;X),\\
\mathcal E_{\kappa,\mathbf{f}}^{\boldsymbol\xi}(\mathbf A,\mathbf a;X)&:=\mathcal E_{\kappa,\mathbf{f}}^+(\mathbf A,\mathbf a;X)\cap\mathfrak M^{\boldsymbol\xi}(\mathbf A;X).\end{align*}

If the class $\mathcal E_{\kappa,\mathbf{f}}^{\boldsymbol\xi}(\mathbf A,\mathbf a;X)$ is nonempty, or equivalently if
\begin{equation*}G_{\kappa,\mathbf{f}}^{\boldsymbol\xi}(\mathbf A,\mathbf a;X):=\inf_{\boldsymbol{\mu}\in\mathcal E^{\boldsymbol\xi}_{\kappa,\mathbf{f}}(\mathbf A,\mathbf a;X)}\,G_{\kappa,\mathbf{f}}(\boldsymbol{\mu})<\infty,\end{equation*}
then the following {\it constrained weighted minimum energy problem\/} makes sense.

\begin{problem}\label{pr2gen}Given\/ $X$, $\kappa$, $\mathbf A$, $\mathbf a$, $\mathbf f$ and\/ $\boldsymbol\xi$, does there exist\/ $\boldsymbol\lambda^{\boldsymbol\xi}_{\mathbf A}\in\mathcal E^{\boldsymbol\xi}_{\kappa,\mathbf{f}}(\mathbf A,\mathbf a;X)$ such that\/ $G_{\kappa,\mathbf{f}}(\boldsymbol\lambda^{\boldsymbol\xi}_{\mathbf A})=G_{\kappa,\mathbf{f}}^{\boldsymbol\xi}(\mathbf A,\mathbf a;X)${\rm?} Let $\mathfrak S^{\boldsymbol{\xi}}_{\kappa,\mathbf f}(\mathbf A,\mathbf a;X)$ consist of all these\/~$\boldsymbol\lambda^{\boldsymbol\xi}_{\mathbf A}$.\end{problem}

\begin{lemma}\label{lemma:unique:}
Any two solutions\/ $\boldsymbol{\lambda},\breve{\boldsymbol{\lambda}}\in\mathfrak S^{\boldsymbol{\xi}}_{\kappa,\mathbf f}(\mathbf A,\mathbf a;X)$ are\/ $R$-equivalent.
\end{lemma}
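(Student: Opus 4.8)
The plan is to use the convexity of the admissible class $\mathcal E^{\boldsymbol\xi}_{\kappa,\mathbf f}(\mathbf A,\mathbf a;X)$ together with the fact that, under the isometric embedding $R$ of $\mathcal E^+_\kappa(\mathbf A;X)$ into the pre-Hilbert space $\mathcal E_\kappa(X)$ (Theorem~\ref{lemma:semimetric}), the energy behaves like a quadratic form, so that a parallelogram-type identity is available.

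Let $\boldsymbol\lambda,\breve{\boldsymbol\lambda}\in\mathfrak S^{\boldsymbol\xi}_{\kappa,\mathbf f}(\mathbf A,\mathbf a;X)$ and write $w:=G^{\boldsymbol\xi}_{\kappa,\mathbf f}(\mathbf A,\mathbf a;X)$; since solutions exist, $w=G_{\kappa,\mathbf f}(\boldsymbol\lambda)=G_{\kappa,\mathbf f}(\breve{\boldsymbol\lambda})$ is a finite real number. First I would check that the midpoint $\boldsymbol\nu:=\tfrac12(\boldsymbol\lambda+\breve{\boldsymbol\lambda})$ again lies in $\mathcal E^{\boldsymbol\xi}_{\kappa,\mathbf f}(\mathbf A,\mathbf a;X)$: each component $\tfrac12(\lambda^i+\breve\lambda^i)$ is carried by $A_i$, has mass $\tfrac12(a_i+a_i)=a_i$, satisfies $\tfrac12(\lambda^i+\breve\lambda^i)\leqslant\tfrac12(\xi^i+\xi^i)=\xi^i$, has finite $\kappa$-energy (because $R\boldsymbol\nu=\tfrac12(R\boldsymbol\lambda+R\breve{\boldsymbol\lambda})\in\mathcal E_\kappa(X)$), and $\langle\mathbf f,\boldsymbol\nu\rangle=\tfrac12\langle\mathbf f,\boldsymbol\lambda\rangle+\tfrac12\langle\mathbf f,\breve{\boldsymbol\lambda}\rangle$ is finite. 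Hence $G_{\kappa,\mathbf f}(\boldsymbol\nu)\geqslant w$.

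Next I would establish the identity
\[G_{\kappa,\mathbf f}(\boldsymbol\nu)=\tfrac12\,G_{\kappa,\mathbf f}(\boldsymbol\lambda)+\tfrac12\,G_{\kappa,\mathbf f}(\breve{\boldsymbol\lambda})-\tfrac14\,\|\boldsymbol\lambda-\breve{\boldsymbol\lambda}\|^2_{\mathcal E^+_\kappa(\mathbf A;X)},\]
which follows from $G_{\kappa,\mathbf f}(\cdot)=\kappa(\cdot,\cdot)+2\langle\mathbf f,\cdot\rangle$, the relation $\kappa(\boldsymbol\mu,\boldsymbol\mu)=\|R\boldsymbol\mu\|_\kappa^2$ (Lemma~\ref{l-Ren}), the linearity of $R$ and of $\boldsymbol\mu\mapsto\langle\mathbf f,\boldsymbol\mu\rangle$ on the convex set, and the parallelogram law in $\mathcal E_\kappa(X)$; the cross term $\kappa(R\boldsymbol\lambda,R\breve{\boldsymbol\lambda})$ is finite by Lemma~\ref{l-Ren}, so all terms make sense. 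Since $G_{\kappa,\mathbf f}(\boldsymbol\lambda)=G_{\kappa,\mathbf f}(\breve{\boldsymbol\lambda})=w$, this gives $G_{\kappa,\mathbf f}(\boldsymbol\nu)=w-\tfrac14\|\boldsymbol\lambda-\breve{\boldsymbol\lambda}\|^2_{\mathcal E^+_\kappa(\mathbf A;X)}$, and comparing with $G_{\kappa,\mathbf f}(\boldsymbol\nu)\geqslant w$ forces $\|\boldsymbol\lambda-\breve{\boldsymbol\lambda}\|_{\mathcal E^+_\kappa(\mathbf A;X)}=0$, i.e.\ $R\boldsymbol\lambda=R\breve{\boldsymbol\lambda}$ by~(\ref{isom})~--- the asserted $R$-equivalence.

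I do not anticipate a real obstacle; the only points demanding care are the admissibility of $\boldsymbol\nu$ (preservation of the mass normalization and of the constraint under a convex combination, and finiteness of $\langle\mathbf f,\boldsymbol\nu\rangle$) and the well-definedness of the mutual energy $\kappa(\boldsymbol\lambda,\breve{\boldsymbol\lambda})$, both of which are covered by Lemma~\ref{l-Ren}. Alternatively, the conclusion can be read off from Lemma~\ref{aux43}: applying the variational inequality~(\ref{aux431}) for $\boldsymbol\lambda$ with $\boldsymbol\mu=\breve{\boldsymbol\lambda}$ and for $\breve{\boldsymbol\lambda}$ with $\boldsymbol\mu=\boldsymbol\lambda$ and adding, the external-field contributions cancel and one is left with $-\|\boldsymbol\lambda-\breve{\boldsymbol\lambda}\|^2_{\mathcal E^+_\kappa(\mathbf A;X)}\geqslant0$, whence again $R\boldsymbol\lambda=R\breve{\boldsymbol\lambda}$.
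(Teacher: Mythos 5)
Your proposal is correct and follows essentially the same route as the paper's proof: convexity of $\mathcal E^{\boldsymbol\xi}_{\kappa,\mathbf f}(\mathbf A,\mathbf a;X)$, the midpoint measure, and the parallelogram identity in $\mathcal E_\kappa(X)$ applied to $R\boldsymbol\lambda$ and $R\breve{\boldsymbol\lambda}$, yielding $\|R\boldsymbol\lambda-R\breve{\boldsymbol\lambda}\|_\kappa=0$. Your extra care about admissibility of the midpoint and the alternative derivation via Lemma~\ref{aux43} are fine but not needed beyond what the paper already does.
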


\begin{proof} This can be established by standard methods based on the convexity of the class $\mathcal E^{\boldsymbol\xi}_{\kappa,\mathbf{f}}(\mathbf A,\mathbf a;X)$, the isometry between this class and its $R$-image in $\mathcal E_\kappa(X)$, and the pre-Hil\-bert structure on the space $\mathcal E_\kappa(X)$. Indeed, in view of the convexity of $\mathcal E^{\boldsymbol{\xi}}_{\kappa,\mathbf f}(\mathbf A,\mathbf a;X)$, relations (\ref{enpos}) and (\ref{wen'}) imply
\[4G_{\kappa,\mathbf f}^{\boldsymbol{\xi}}(\mathbf A,\mathbf a;X)\leqslant
4G_{\kappa,\mathbf f}\Bigl(\frac{\boldsymbol{\lambda}+\breve{\boldsymbol{\lambda}}}{2}\Bigr)=
\|R\boldsymbol{\lambda}+R\breve{\boldsymbol{\lambda}}\|_\kappa^2+
4\langle\mathbf{f},\boldsymbol{\lambda}+\breve{\boldsymbol{\lambda}}\rangle.\]
On the other hand, applying the parallelogram identity in $\mathcal E_\kappa(X)$ to $R\boldsymbol{\lambda}$
and $R\breve{\boldsymbol{\lambda}}$ and then adding and
subtracting
$4\langle\mathbf{f},\boldsymbol{\lambda}+\breve{\boldsymbol{\lambda}}\rangle$ we get
\[\|R\boldsymbol{\lambda}-R\breve{\boldsymbol{\lambda}}\|_\kappa^2=
-\|R\boldsymbol{\lambda}+R\breve{\boldsymbol{\lambda}}\|_\kappa^2-4\langle\mathbf{f},\boldsymbol{\lambda}+
\breve{\boldsymbol{\lambda}}\rangle+2G_{\kappa,\mathbf{f}}(\boldsymbol{\lambda})+
2G_{\kappa,\mathbf{f}}(\breve{\boldsymbol{\lambda}}).\] When combined
with the preceding relation, this yields
\[0\leqslant\|R\boldsymbol{\lambda}-R\breve{\boldsymbol{\lambda}}\|^2_\kappa\leqslant-
4G_{\kappa,\mathbf{f}}^{\boldsymbol{\xi}}(\mathbf{A},\mathbf{a};X)+2G_{\kappa,\mathbf{f}}(\boldsymbol{\lambda})+
2G_{\kappa,\mathbf{f}}(\breve{\boldsymbol{\lambda}})=0,\] which
establishes the lemma because of the strict positive definiteness of~$\kappa$.\end{proof}

\begin{corollary}\label{cor-unique} If the class\/ $\mathfrak S^{\boldsymbol{\xi}}_{\kappa,\mathbf f}(\mathbf A,\mathbf a;X)$ is nonempty, then it reduces to a single element whenever all the\/ $A_i$, $i\in I$, are mutually essentially disjoint.\end{corollary}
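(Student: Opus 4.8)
The plan is to read the corollary off directly from Lemma~\ref{lemma:unique:} together with the characterization, given in Theorem~\ref{lemma:semimetric}, of when the semimetric on $\mathcal E^+_\kappa(\mathbf A;X)$ is in fact a metric. Assume $\mathfrak S^{\boldsymbol{\xi}}_{\kappa,\mathbf f}(\mathbf A,\mathbf a;X)\ne\varnothing$ and pick any two of its elements $\boldsymbol\lambda,\breve{\boldsymbol\lambda}$. By Lemma~\ref{lemma:unique:} they are $R$-equivalent, i.e.\ $R\boldsymbol\lambda=R\breve{\boldsymbol\lambda}$, and hence by the defining relation~(\ref{isom}) we have $\|\boldsymbol\lambda-\breve{\boldsymbol\lambda}\|_{\mathcal E^+_\kappa(\mathbf A;X)}=0$. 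If all the $A_i$, $i\in I$, are mutually essentially disjoint, then the last clause of Theorem~\ref{lemma:semimetric} asserts that $\|\cdot\|_{\mathcal E^+_\kappa(\mathbf A;X)}$ is a genuine metric on $\mathcal E^+_\kappa(\mathbf A;X)$; therefore $\|\boldsymbol\lambda-\breve{\boldsymbol\lambda}\|_{\mathcal E^+_\kappa(\mathbf A;X)}=0$ forces $\boldsymbol\lambda=\breve{\boldsymbol\lambda}$, so the solution class is a singleton.

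If one prefers to see this at the level of components rather than invoking the metric clause, the argument runs as follows. From $R\boldsymbol\lambda=R\breve{\boldsymbol\lambda}$ and the uniqueness of the Hahn--Jordan decomposition one gets, via~(\ref{defR}), $\sum_{i\in I^+}\lambda^i=\sum_{i\in I^+}\breve\lambda^i$ and $\sum_{j\in I^-}\lambda^j=\sum_{j\in I^-}\breve\lambda^j$. Fix $k\in I^+$ and restrict the first identity to the (universally measurable) set $A_k$: for $i\in I^+$ with $i\ne k$ the measure $\lambda^i$ is carried by $A_i$ and, being of finite $\kappa$-energy, is $c_\kappa$-absolutely continuous, so $\lambda^i(A_k)=\lambda^i(A_i\cap A_k)=0$ because $c_\kappa(A_i\cap A_k)=0$; the same holds for $\breve\lambda^i$. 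Hence the restriction leaves only $\lambda^k|_{A_k}=\lambda^k$ on the left and $\breve\lambda^k|_{A_k}=\breve\lambda^k$ on the right, i.e.\ $\lambda^k=\breve\lambda^k$. The case $k\in I^-$ is identical, and altogether $\boldsymbol\lambda=\breve{\boldsymbol\lambda}$.

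There is essentially no hard step here: the corollary is a formal consequence of facts already established. The only point that needs a moment's attention is that $R$-equivalence of two vector measures \emph{of finite energy} whose plates pairwise meet in sets of zero capacity collapses to genuine equality — which is precisely the content of the metric clause of Theorem~\ref{lemma:semimetric}, relying on the fact that a measure of finite energy does not charge sets of zero capacity \cite[Lemma~2.3.1]{F1}. Once that is in hand, combining it with Lemma~\ref{lemma:unique:} finishes the proof immediately.
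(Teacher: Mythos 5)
Your proof is correct and follows the paper's own route exactly: the paper also deduces the corollary directly from Lemma~\ref{lemma:unique:} combined with the metric clause of Theorem~\ref{lemma:semimetric}. Your component-level alternative merely unfolds the (already established) sufficiency part of that metric clause, so it adds detail but not a different argument.
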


\begin{proof}This follows directly from Lemma~\ref{lemma:unique:} and Theorem~\ref{lemma:semimetric}.\end{proof}

\begin{remark}\label{rem-3}{\rm Assume for a moment that (\ref{distpos}) holds, the condenser $\mathbf A$ is standard, the kernel $\kappa$ is perfect and the external field $\mathbf f$ is as described in Remark~\ref{r-3}. It has been shown in \cite[Theorem~6.2]{Z9} that then condition (\ref{r-suff}) guarantees the existence of a solution to Problem~\ref{pr2gen} for any $\boldsymbol\xi$ and any vector $\mathbf a$.\footnote{Actually, this result and those described in Remark~\ref{r-3} have been obtained in \mbox{\cite{Z9}--\cite{ZPot3}} even for {\it infinite\/} dimensional vector measures.\label{foot}}}\end{remark}

\section{$\alpha$-Riesz balayage and $\alpha$-Green kernel}\label{sec:RG} In all that follows fix $n\geqslant3$, $\alpha\in(0,2]$ and a domain $D\subset\mathbb R^n$ with $c_{\kappa_\alpha}(D^c)>0$, where $D^c:=\mathbb R^n\setminus D$, and assume that either $\kappa(x,y)=\kappa_\alpha(x,y):=|x-y|^{\alpha-n}$ is the {\it $\alpha$-Riesz kernel\/} on $X=\mathbb R^n$, or $\kappa(x,y)=g_D^\alpha(x,y)$ is the {\it $\alpha$-Green kernel\/} on $X=D$. For the definition of $g_D^\alpha$, see \cite[Chapter~IV, Section~5]{L} or see below.

For given $x\in\mathbb R^n$ and $r\in(0,\infty)$ write $B(x,r):=\{y\in\mathbb R^n:\ |y-x|<r\}$, $S(x,r):=\{y\in\mathbb R^n:\ |y-x|=r\}$ and $\overline{B}(x,r):=B(x,r)\cup S(x,r)$.
Let $\partial Q$ denote the boundary of a set $Q\subset\mathbb R^n$ in the topology of~$\mathbb R^n$.

We shall simply write $\alpha$ instead of $\kappa_\alpha$ if $\kappa_\alpha$ serves as an index.
When speaking of a positive scalar Radon measure $\mu\in\mathfrak M^+(\mathbb R^n)$, we always assume $\kappa_\alpha(\cdot,\mu)\not\equiv+\infty$.
This implies that
\begin{equation}\label{1.3.10}\int_{|y|>1}\,\frac{d\mu(y)}{|y|^{n-\alpha}}<\infty\end{equation}
(see \cite[Eq.~1.3.10]{L}), and consequently that $\kappa_\alpha(\cdot,\mu)$ is finite $c_\alpha$-n.e.\ on $\mathbb R^n$ \cite[Chap\-ter~III, Section~1]{L}; these two implications can actually be reversed.

We shall usually use the short form 'n.e.' instead of '$c_\alpha$-n.e.' if this will not cause any mis\-under\-standing.

\begin{definition}\label{d-ext} $\nu\in\mathfrak M(D)$ is called {\it extendible\/} if there exist $\widehat{\nu^+}$ and $\widehat{\nu^-}$ extending $\nu^+$ and $\nu^-$, respectively, by~$0$ off $D$ to $\mathbb R^n$ (see (\ref{extend})), and if these $\widehat{\nu^+}$ and $\widehat{\nu^-}$ satisfy (\ref{1.3.10}).
We identify such a $\nu\in\mathfrak M(D)$ with its extension $\widehat\nu:=\widehat{\nu^+}-\widehat{\nu^-}$, and we therefore write $\widehat\nu=\nu$.\end{definition}

Every bounded measure $\nu\in\mathfrak M(D)$ is extendible. The converse holds if $D$ is bounded, but not in general (e.g.\ not if $D^c$ is compact). The set of all extendible measures consists of all the restrictions $\mu|_D$ where $\mu$ ranges over $\mathfrak M(\mathbb R^n)$.

The {\it $\alpha$-Green kernel\/} $g=g_D^\alpha$ on $D$ is defined by
\begin{equation*}g^\alpha_D(x,y)=\kappa_\alpha(x,\varepsilon_y)-
\kappa_\alpha(x,\varepsilon_y^{D^c})\text{ \ for all \ }x,y\in D,\end{equation*}
where $\varepsilon_y$ denotes the unit Dirac measure at a
point $y$ and $\varepsilon_y^{D^c}$ its {\it $\alpha$-Riesz balayage\/} onto the (closed) set $D^c$, uniquely determined in the frame of the classical approach by \cite[Theorem~3.6]{FZ} pertaining to positive Radon measures on $\mathbb R^n$. See also the book by Bliedtner and Hansen \cite{BH} where balayage is studied in the setting of balayage spaces.

We shall simply write $\mu'$ instead of $\mu^{D^c}$ when speaking of the $\alpha$-Riesz balayage of $\mu\in\mathfrak M^+(D;\mathbb R^n)$ onto $D^c$. According to \cite[Corollaries~3.19 and 3.20]{FZ}, for any $\mu\in\mathfrak M^+(D;\mathbb R^n)$ {\it the balayage\/ $\mu'$ is\/ $c_\alpha$-absolutely continuous and it is determined uniquely by relation
\begin{equation}\label{bal-eq}\kappa_\alpha(\cdot,\mu')=\kappa_\alpha(\cdot,\mu)\text{ \ n.e.\ on \ }D^c\end{equation}
among the\/ $c_\alpha$-absolutely continuous measures supported by\/} $D^c$. Furthermore, there holds the integral representation\footnote{In the literature the integral representation (\ref{int-repr}) seems to have been more or less taken for granted, though it has been pointed out in \cite[p.~18, Remarque]{Bou} that it requires that the family $(\varepsilon_y')_{y\in D}$ is {\it $\mu$-ad\-equ\-ate\/} in the sense of \cite[Section~3, D\'efinition~1]{Bou} (see also counterexamples (without $\mu$-adequacy) in Exercises~1 and~2 at the end of that section). We therefore have brought in \cite[Lemma~3.16]{FZ} a proof of this adequacy.}
\begin{equation}\label{int-repr}\mu'=\int\varepsilon_y'\,d\mu(y)\end{equation}
(see \cite[Theorem~3.17]{FZ}). If moreover\/ $\mu\in\mathcal E_\alpha^+(D;\mathbb R^n)$, then the balayage $\mu'$ is in fact
{\it the orthogonal projection of\/ $\mu$ onto the convex cone\/ $\mathcal E^+_\alpha(D^c;\mathbb R^n)$} (see \cite[Theorem~4.12]{Fu5} or \cite[Theorem~3.1]{FZ}), i.e.\ $\mu'\in\mathcal E^+_\alpha(D^c;\mathbb R^n)$ and
\begin{equation}\label{proj}\|\mu-\theta\|_\alpha>\|\mu-\mu'\|_\alpha\text{ \ for all \ }\theta\in\mathcal E^+_\alpha(D^c;\mathbb R^n),\quad\theta\ne\mu'.\end{equation}

If now $\nu\in\mathfrak M(D)$ is an extendible (signed) measure, then
\[\nu':=\nu^{D^c}:=(\nu^+)^{D^c}-(\nu^-)^{D^c}\]
is said to be a {\it balayage\/} of $\nu$ onto $D^c$. It follows from \cite[Chapter~III, Section~1, n$^\circ$\,1, Remark]{L} that the balayage $\nu'$ is determined uniquely by (\ref{bal-eq}) with $\nu$ in place of $\mu$ among the $c_\alpha$-absolutely continuous measures supported by~$D^c$.

The following definition goes back to Brelot (see \cite[Theorem~VII.13]{Brelo2}).

\begin{definition}A closed set $F\subset\mathbb R^n$ is said to be {\it $\alpha$-thin
at infinity\/} if either $F$ is compact, or the inverse of~$F$ relative to $S(0,1)$ has $x=0$ as an $\alpha$-irregular boundary point (cf.\ \cite[Theorem~5.10]{L}).\end{definition}

\begin{theorem}[{\rm see \cite[Theorem~3.22]{FZ}}]\label{bal-mass-th} The set\/ $D^c$ is not\/ $\alpha$-thin at infinity if and only if for every bounded measure\/ $\mu\in\mathfrak M^+(D)$ we have
\begin{equation}\label{t-mass'}\mu'(\mathbb R^n)=\mu(\mathbb R^n).\end{equation}
\end{theorem}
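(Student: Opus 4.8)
The plan is to relate the mass of the balayage $\mu'$ to the behaviour of the $\alpha$-Riesz potential of $\mu'$ at infinity, and then to recognize that the defect $\mu(\mathbb R^n)-\mu'(\mathbb R^n)$ is controlled by whether $D^c$ can "absorb" all the mass, i.e.\ by the $\alpha$-thinness of $D^c$ at infinity. First I would reduce to the case of a single Dirac measure: by the integral representation (\ref{int-repr}), $\mu'=\int\varepsilon_y'\,d\mu(y)$, so $\mu'(\mathbb R^n)=\int\varepsilon_y'(\mathbb R^n)\,d\mu(y)$, and since $\mu$ is bounded with $\mu(\mathbb R^n)=\int 1\,d\mu(y)$, the equality (\ref{t-mass'}) holds for every bounded $\mu\in\mathfrak M^+(D)$ if and only if $\varepsilon_y'(\mathbb R^n)=1$ for every (equivalently, for $\mu$-a.e., but by arbitrariness of $\mu$, for every) $y\in D$. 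So the whole theorem reduces to the scalar claim: $D^c$ is not $\alpha$-thin at infinity $\iff$ $\varepsilon_y'(\mathbb R^n)=1$ for all $y\in D$.

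Next I would compute $\varepsilon_y'(\mathbb R^n)$ in terms of potentials. The balayage $\varepsilon_y'$ is $c_\alpha$-absolutely continuous, supported by $D^c$, and satisfies $\kappa_\alpha(\cdot,\varepsilon_y')=\kappa_\alpha(\cdot,\varepsilon_y)=|\cdot-y|^{\alpha-n}$ n.e.\ on $D^c$; moreover $\kappa_\alpha(x,\varepsilon_y')\leqslant|x-y|^{\alpha-n}$ everywhere by the domination/maximum principle. The total mass of a measure of this type can be read off from the asymptotics of its potential: for any bounded $\nu\in\mathfrak M^+(\mathbb R^n)$ one has $|x|^{n-\alpha}\kappa_\alpha(x,\nu)\to\nu(\mathbb R^n)$ as $|x|\to\infty$. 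Applying this to $\nu=\varepsilon_y'$ and to $\nu=\varepsilon_y$ and using $\kappa_\alpha(x,\varepsilon_y')\leqslant\kappa_\alpha(x,\varepsilon_y)$ shows $\varepsilon_y'(\mathbb R^n)\leqslant 1$ always, with equality precisely when the two potentials are asymptotically equal at infinity, i.e.\ when the "lost mass" $1-\varepsilon_y'(\mathbb R^n)$, which equals the limit at infinity of $|x|^{n-\alpha}\bigl(\kappa_\alpha(x,\varepsilon_y)-\kappa_\alpha(x,\varepsilon_y')\bigr)=|x|^{n-\alpha}g_D^\alpha(x,y)$, vanishes. Thus the claim becomes: $g_D^\alpha(x,y)=o(|x|^{\alpha-n})$ as $|x|\to\infty$ (for one, hence every, $y\in D$) if and only if $D^c$ is not $\alpha$-thin at infinity.

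The identification of the last equivalence is the classical link between mass under balayage and thinness at infinity, and here is where I expect the main work. The function $u:=\kappa_\alpha(\cdot,\varepsilon_y)-\kappa_\alpha(\cdot,\varepsilon_y')=g_D^\alpha(\cdot,y)$ is $\alpha$-harmonic (in the appropriate Riesz sense) off $D^c\cup\{y\}$, nonnegative, and bounded near infinity; its decay rate at infinity is faster than $|x|^{\alpha-n}$ exactly when $D^c$ is "large at infinity" in the capacitary sense. To make this precise I would invoke the Kelvin transform relative to $S(0,1)$: inversion sends the point at infinity to $0$ and sends $D^c$ to its inverse $(D^c)^*$, transforming the statement "$u$ decays faster than $|x|^{\alpha-n}$ at $\infty$" into "the Kelvin transform $u^*$ tends to $0$ at $x=0$", i.e.\ $x=0$ is an $\alpha$-irregular boundary point of $(D^c)^*$ — which is precisely the definition of $D^c$ being $\alpha$-thin at infinity. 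Conversely, if $D^c$ is not $\alpha$-thin at infinity then $x=0$ is an $\alpha$-regular boundary point of $(D^c)^*$, forcing $u^*(0)$ to attain the "full" boundary value and hence $\varepsilon_y'(\mathbb R^n)=1$. Since this is exactly the content of \cite[Theorem~3.22]{FZ}, the cleanest route in the paper is simply to cite that result and record the reduction via (\ref{int-repr}); if a self-contained argument is wanted, the Kelvin-transform/regularity dictionary above is the substance of it, with the subtle point being the careful justification — already handled in \cite{FZ} — that the family $(\varepsilon_y')_{y\in D}$ is $\mu$-adequate so that (\ref{int-repr}) and the ensuing interchange of integration and total-mass evaluation are legitimate.
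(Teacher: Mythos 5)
The paper offers no proof of this statement at all: it is imported verbatim from \cite[Theorem~3.22]{FZ}, and your closing remark that the cleanest route is simply to cite that result (together with the reduction to Dirac measures via (\ref{int-repr}) and the $\mu$-adequacy of $(\varepsilon_y')_{y\in D}$) is exactly what the paper does. So your proposal matches the paper's approach.

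One caveat about your supplementary sketch, in case you intend it as a self-contained argument. The asymptotic $|x|^{n-\alpha}\kappa_\alpha(x,\nu)\to\nu(\mathbb R^n)$ as $|x|\to\infty$ does not hold for an arbitrary bounded $\nu\in\mathfrak M^+(\mathbb R^n)$: Fatou gives only $\liminf\geqslant\nu(\mathbb R^n)$, and the reverse inequality needs the dominating function $|x|^{n-\alpha}|x-z|^{\alpha-n}$ to be bounded uniformly in $z\in S(\nu)$, which forces $x$ to stay at distance $\gtrsim|x|$ from $S(\nu)=S(\varepsilon_y')\subset D^c$; such an approach to infinity need not exist (e.g.\ if $D$ is bounded, in which case $D^c$ is trivially not $\alpha$-thin at infinity but your limit is vacuous). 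The argument that actually works --- and is the one in \cite{FZ}, echoed in the paper's proof of Theorem~\ref{desc-sup} --- identifies $\varepsilon_y'$ as the Kelvin transform of the $\kappa_\alpha$-equilibrium measure $\gamma_y$ of the inverse $K_y$ of $D^c$ relative to $S(y,1)$, whence $\varepsilon_y'(\mathbb R^n)=\kappa_\alpha(y,\gamma_y)$; this equals $1$ precisely when $y$ (the image of the point at infinity) is an $\alpha$-regular point of $K_y$, i.e.\ precisely when $D^c$ is not $\alpha$-thin at infinity. Your Kelvin-transform paragraph is the right dictionary; I would route the mass computation through it rather than through the potential asymptotics.
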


As noted in Remark~\ref{rem:clas} above, the $\alpha$-Riesz kernel $\kappa_\alpha$ on $\mathbb R^n$ and the $\alpha$-Green kernel $g^\alpha_D$ on $D$ are both strictly positive definite and moreover perfect. Furthermore, the $\alpha$-Riesz kernel $\kappa_\alpha$ (with $\alpha\in(0,2]$) satisfies the complete maximum principle in the form stated in Section~\ref{sec:princ} (see \cite[Theorems~1.27, 1.29]{L}). Regarding a similar result for the $\alpha$-Green kernel $g$, the following assertion holds.

\begin{theorem}[{\rm see \cite[Theorem~4.6]{FZ}}]\label{th-dom-pr} Let\/ $\mu,\nu\in\mathfrak M^+(D)$ be extendible, $g(\mu,\mu)<\infty$, and let\/ $w$ be a positive\/ $\alpha$-super\-har\-monic function on\/ $\mathbb R^n$ {\rm\cite[{\it Chapter\/}~I, {\it Section\/}~5, {\it n\/}$^\circ$\,20]{L}}. If moreover\/ $g(\cdot,\mu)\leqslant g(\cdot,\nu)+w(\cdot)$ $\mu$-a.e.\ on\/ $D$, then the same inequality holds on all of\/~$D$.\end{theorem}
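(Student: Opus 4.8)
The plan is to derive this domination principle for the $\alpha$-Green kernel $g=g^\alpha_D$ from the complete maximum principle for the $\alpha$-Riesz kernel $\kappa_\alpha$ on $\mathbb R^n$ (\cite[Theorems~1.27, 1.29]{L}), by expressing $\alpha$-Green potentials through $\alpha$-Riesz potentials of the measures in play and their $\alpha$-Riesz balayages onto $D^c$. The key identity is
\[g(x,\eta)=\kappa_\alpha(x,\eta)-\kappa_\alpha(x,\eta')\qquad(x\in D),\]
valid for every extendible $\eta\in\mathfrak M^+(D)$ by the integral representation (\ref{int-repr}); here all potentials are finite n.e. Applying it to $\eta=\mu$ and to $\eta=\nu$, the hypothesis takes the equivalent form
\[\kappa_\alpha(\cdot,\mu)+\kappa_\alpha(\cdot,\nu')\leqslant\kappa_\alpha(\cdot,\mu')+\kappa_\alpha(\cdot,\nu)+w\quad\mu\text{-a.e.\ on }D,\]
an inequality between $\alpha$-Riesz potentials of positive measures, augmented by $w$. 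On $D^c$ the balayage relation (\ref{bal-eq}) yields $\kappa_\alpha(\cdot,\mu')=\kappa_\alpha(\cdot,\mu)$ and $\kappa_\alpha(\cdot,\nu')=\kappa_\alpha(\cdot,\nu)$ n.e., so, as $w\geqslant0$, the last display even holds as an equality n.e.\ on $D^c$.

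Two preparatory steps are required before the complete maximum principle for $\kappa_\alpha$ can be invoked. First, a positive $\alpha$-superharmonic function on $\mathbb R^n$ has a Riesz decomposition $w=\kappa_\alpha(\cdot,\omega)+c$ with $\omega\in\mathfrak M^+(\mathbb R^n)$ and a constant $c\geqslant0$ (its greatest $\alpha$-harmonic minorant on all of $\mathbb R^n$ being a nonnegative constant), so that $w$ can be absorbed into the right-hand potential. Second, the complete maximum principle demands that the measure standing on the left have finite $\alpha$-Riesz energy; but $\mu$ is only assumed to have finite $\alpha$-Green energy, and neither it nor the balayage $\nu'$ need have finite $\alpha$-Riesz energy. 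I would therefore carry out a double exhaustion: choose compact sets $K_m\uparrow D$ and put $\mu_m:=\mu|_{K_m}$, $\nu_k:=\nu|_{K_k}$. Since $\kappa_\alpha(x,\varepsilon_y')\leqslant\operatorname{dist}(K_m,D^c)^{\alpha-n}$ for $x,y\in K_m$, finiteness of $g(\mu,\mu)$ forces $\kappa_\alpha(\mu_m,\mu_m)<\infty$; and $\kappa_\alpha(\nu_k',\nu_k')\leqslant\kappa_\alpha(\nu_k,\nu_k)<\infty$ because $\nu_k$ is compactly supported in $D$.

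Since $g(\cdot,\mu_m)\leqslant g(\cdot,\mu)$, the hypothesis holds verbatim for the pair $(\mu_m,\nu)$; thus, by the computation above and since $\nu_k'\leqslant\nu'$,
\[\kappa_\alpha(\cdot,\mu_m)+\kappa_\alpha(\cdot,\nu_k')\leqslant\kappa_\alpha(\cdot,\mu_m'+\nu+\omega)+c\]
holds $\mu_m$-a.e.\ on $D$, while on $D^c$ it holds n.e.\ (there $\kappa_\alpha(\cdot,\mu_m')=\kappa_\alpha(\cdot,\mu_m)$ and $\kappa_\alpha(\cdot,\nu_k')\leqslant\kappa_\alpha(\cdot,\nu')=\kappa_\alpha(\cdot,\nu)$ n.e.). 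As the balayage $\nu_k'$ is $c_\alpha$-absolutely continuous and carried by $D^c$, it does not charge the n.e.\ exceptional set; hence the last display holds $(\mu_m+\nu_k')$-a.e.\ on $\mathbb R^n$. The left-hand measure $\mu_m+\nu_k'$ has finite $\alpha$-Riesz energy, so the complete maximum principle for $\kappa_\alpha$ propagates the inequality to all of $\mathbb R^n$, in particular to $D$. Letting $k\to\infty$ (monotone convergence: $\nu_k'\uparrow\nu'$) gives $g(\cdot,\mu_m)\leqslant g(\cdot,\nu)+w$ on $D$; letting then $m\to\infty$ (monotone convergence: $\mu_m\uparrow\mu$, $\mu_m'\uparrow\mu'$, $g(\cdot,\mu_m)\uparrow g(\cdot,\mu)$) yields $g(\cdot,\mu)\leqslant g(\cdot,\nu)+w$ on all of $D$.

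The chief difficulty is precisely this finite-energy bookkeeping: the reduction to $\kappa_\alpha$ is legitimate only after the double truncation, and one must verify that the $\mu$-a.e.\ inequality on $D$ and the n.e.\ identity on $D^c$ genuinely splice into one inequality holding almost everywhere with respect to a measure that also charges $D^c$ — which is exactly where the $c_\alpha$-absolute continuity of $\alpha$-Riesz balayage (\cite[Corollaries~3.19, 3.20]{FZ}) is indispensable — followed by the two monotone passages to the limit. The remaining ingredients (lower semicontinuity of $g(\cdot,\nu)$, finiteness of the potentials n.e., additivity of balayage, the Riesz decomposition and the Liouville property of $w$) are routine and use only facts already recorded above.
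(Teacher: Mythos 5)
The paper itself gives no proof of this statement---it is imported verbatim from \cite[Theorem~4.6]{FZ}---so I can only judge your argument on its own terms. Your architecture (rewrite $g(\cdot,\eta)=\kappa_\alpha(\cdot,\eta)-\kappa_\alpha(\cdot,\eta')$, splice the $\mu$-a.e.\ inequality on $D$ with the n.e.\ balayage identity on $D^c$, absorb $w$ via its Riesz decomposition, and invoke the complete maximum principle for $\kappa_\alpha$ after truncating to secure finite energy on the left) is the natural route and uses exactly the tools the paper assembles (Lemma~\ref{l-hatg}, Lemma~\ref{l-hen'-comp}, the balayage properties (\ref{bal-eq}), (\ref{int-repr})). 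But one step is simply false as written: ``$\kappa_\alpha(\nu_k,\nu_k)<\infty$ because $\nu_k$ is compactly supported in $D$.'' Compact support does not give finite $\alpha$-Riesz energy: take $\nu=\varepsilon_{y_0}$ with $y_0\in D$, which is extendible and whose restriction to any large enough compact set has infinite energy. The conclusion you need, $\kappa_\alpha(\nu_k',\nu_k')<\infty$, is nevertheless true---and by the very estimate you already use for $\mu_m$: since $\kappa_\alpha(\cdot,\nu_k')\leqslant\kappa_\alpha(\cdot,\nu_k)$ everywhere, one gets $\kappa_\alpha(\nu_k',\nu_k')\leqslant\kappa_\alpha(\nu_k,\nu_k')=\int\kappa_\alpha(y,\nu_k')\,d\nu_k(y)\leqslant\operatorname{dist}(K_k,D^c)^{\alpha-n}\,\nu_k(D)^2<\infty$, because $\nu_k'$ is carried by $D^c$ while $\nu_k$ lives on the compact set $K_k$ at positive distance from $D^c$, and balayage does not increase total mass. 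With that repair the complete maximum principle applies to $\mu_m+\nu_k'$ as you intend.

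A second, smaller gap: your blanket remark that ``all potentials are finite n.e.'' would only deliver the conclusion n.e.\ on $D$, whereas the theorem asserts it at \emph{every} point of $D$. To close this, note that $\mu_m'$ and $\nu'$ are carried by the closed set $D^c$, so their $\alpha$-Riesz potentials are finite (indeed continuous and $\alpha$-harmonic) at every point of $D$; hence the identity $g(x,\mu_m)=\kappa_\alpha(x,\mu_m)-\kappa_\alpha(x,\mu_m')$ and its analogue for $\nu$ hold for all $x\in D$, the subtraction recovering $g(\cdot,\mu_m)\leqslant g(\cdot,\nu)+w$ from the inequality produced by the maximum principle is legitimate everywhere on $D$, and the monotone limit $g(\cdot,\mu_m)\uparrow g(\cdot,\mu)$ then finishes the proof.
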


The following three lemmas establish relations between potentials and energies relative to the kernels $\kappa_\alpha$ and $g=g^\alpha_D$, respectively.

\begin{lemma}\label{l-hatg} For any extendible measure\/ $\mu\in\mathfrak M(D)$, the $\alpha$-Green potential\/ $g(\cdot,\mu)$ is well defined and finite\/ {\rm(}$c_\alpha$-{\rm)}n.e.\ on\/ $D$ and given by\footnote{If $Q$ is a given subset of $D$, then any assertion involving a variable point holds n.e.\ on $Q$ if and only if it holds $c_g$-n.e.\ on $Q$ \cite[Lemma~2.6]{DFHSZ}.\label{RG}}
\begin{equation}\label{hatg}g(\cdot,\mu)=\kappa_\alpha(\cdot,\mu-\mu')\text{ \ n.e.\ on \ }D.\end{equation}
\end{lemma}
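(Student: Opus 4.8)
The plan is to reduce the claim to the definition $g^\alpha_D(x,y)=\kappa_\alpha(x,\varepsilon_y)-\kappa_\alpha(x,\varepsilon_y')$ by integrating this identity against $\mu^+$ and $\mu^-$ and invoking the integral representation $\mu'=\int\varepsilon_y'\,d\mu(y)$ from~(\ref{int-repr}). First I would treat the positive case $\mu\in\mathfrak M^+(D;\mathbb R^n)$ extendible. For $x\in D$ fixed, Tonelli's theorem applied to the nonnegative kernel $g^\alpha_D(x,\cdot)$ together with~(\ref{int-repr}) gives $g(x,\mu)=\int g^\alpha_D(x,y)\,d\mu(y)=\int\kappa_\alpha(x,\varepsilon_y)\,d\mu(y)-\int\kappa_\alpha(x,\varepsilon_y')\,d\mu(y)=\kappa_\alpha(x,\mu)-\kappa_\alpha(x,\mu')$, the last equality for the second term again by Tonelli and~(\ref{int-repr}). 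This manipulation is valid at every $x$ where $\kappa_\alpha(x,\mu)<\infty$, i.e. $c_\alpha$-n.e.\ on $\mathbb R^n$ by~(\ref{1.3.10}) and the cited result from \cite[Chapter~III, Section~1]{L}, and in particular n.e.\ on $D$; where $\kappa_\alpha(x,\mu)=\infty$ one still has the identity as an equality in $[0,\infty]$, but the statement only claims it n.e.

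For a general extendible signed $\mu=\mu^+-\mu^-$, I would apply the positive case to $\mu^+$ and to $\mu^-$ separately, obtaining $g(\cdot,\mu^+)=\kappa_\alpha(\cdot,\mu^+-(\mu^+)')$ and $g(\cdot,\mu^-)=\kappa_\alpha(\cdot,\mu^--(\mu^-)')$, each holding n.e.\ on $D$. By Definition~\ref{d-ext} both $\widehat{\mu^+}$ and $\widehat{\mu^-}$ satisfy~(\ref{1.3.10}), so $\kappa_\alpha(\cdot,\mu^+)$ and $\kappa_\alpha(\cdot,\mu^-)$ are each finite $c_\alpha$-n.e.\ on $\mathbb R^n$; subadditivity of $c_\alpha$ on the two (universally measurable) exceptional sets lets me subtract the two identities off a single n.e.\ set, yielding $g(\cdot,\mu)=g(\cdot,\mu^+)-g(\cdot,\mu^-)=\kappa_\alpha(\cdot,\mu^+-(\mu^+)')-\kappa_\alpha(\cdot,\mu^--(\mu^-)')=\kappa_\alpha(\cdot,\mu-\mu')$ n.e.\ on $D$, where the last step uses $\mu'=(\mu^+)'-(\mu^-)'$ by definition of balayage of a signed extendible measure and linearity of the potential on this n.e.\ set. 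The well-definedness and n.e.-finiteness of $g(\cdot,\mu)$ then follow because the right-hand side $\kappa_\alpha(\cdot,\mu-\mu')$ is the difference of two potentials each finite $c_\alpha$-n.e.\ (using that $\mu'$ is $c_\alpha$-absolutely continuous and, being the sum of two balayages of extendible measures, again has locally finite Riesz potential).

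The main obstacle is the interchange of integration and balayage, i.e. justifying $\int\kappa_\alpha(x,\varepsilon_y')\,d\mu(y)=\kappa_\alpha(x,\mu')$; this is exactly where $\mu$-adequacy of the family $(\varepsilon_y')_{y\in D}$ enters, and it is precisely what~(\ref{int-repr}) (via \cite[Lemma~3.16]{FZ}) is designed to supply, so once that representation is in hand the interchange is a routine application of Tonelli to the nonnegative double integral. A secondary point to be careful about is bookkeeping of the n.e.\ exceptional sets: there are finitely many of them (from $\kappa_\alpha(\cdot,\mu^\pm)$, from $\kappa_\alpha(\cdot,(\mu^\pm)')$, and from the two positive-case identities), all universally measurable, and one simply takes their union, whose capacity vanishes by subadditivity. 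I would also remark, as the footnote indicates, that ``n.e.\ on $D$'' and ``$c_g$-n.e.\ on $D$'' coincide by \cite[Lemma~2.6]{DFHSZ}, so no ambiguity arises from which capacity is used to measure the exceptional set.
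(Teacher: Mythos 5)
Your proposal is correct and follows essentially the same route as the paper's proof: reduce to the definition of $g^\alpha_D$, apply the integral representation (\ref{int-repr}) to $\mu^\pm$, and interchange the integrations (the paper cites \cite[Section~3, Th\'eor\`eme~1]{Bou} for exactly the interchange you justify via Tonelli and $\mu$-adequacy), with the n.e.\ finiteness of $\kappa_\alpha(\cdot,\mu)$ and $\kappa_\alpha(\cdot,\mu')$ coming from Definition~\ref{d-ext}. Your extra bookkeeping of the exceptional sets via subadditivity of $c_\alpha$ is sound and merely spells out what the paper leaves implicit.
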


\begin{proof}It is seen from Definition~\ref{d-ext} that $\kappa_\alpha(\cdot,\mu)$ is well defined and finite n.e.\ on $\mathbb R^n$, and hence so is $\kappa_\alpha(\cdot,\mu')$. Applying (\ref{int-repr}) to $\mu^\pm$, we get by \cite[Section~3, Th\'eor\`eme~1]{Bou}
 \[g(\cdot,\mu)=\int\,\bigl[\kappa_\alpha(\cdot,\varepsilon_y)-\kappa_\alpha(\cdot,\varepsilon_y')\bigr]\,d\mu(y)=\kappa_\alpha(\cdot,\mu)-\kappa_\alpha(\cdot,\mu')\]
n.e.\ on $D$, and the lemma follows.\end{proof}

\begin{lemma}\label{l-hen'} Suppose that\/ $\mu\in\mathfrak M(D)$ is extendible and the extension belongs to\/ $\mathcal E_\alpha(\mathbb R^n)$. Then
\begin{align}
\label{l3-1}&\mu\in\mathcal E_g(D),\\
\label{l3-2}&\mu-\mu'\in\mathcal E_\alpha(\mathbb R^n),\\
\label{eq1-2-hen}&\|\mu\|^2_g=\|\mu-\mu'\|^2_\alpha=\|\mu\|^2_\alpha-\|\mu'\|^2_\alpha.
\end{align}
\end{lemma}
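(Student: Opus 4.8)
\textbf{Proof plan for Lemma~\ref{l-hen'}.} The plan is to establish all three assertions by reducing everything to a single strong identity in the pre-Hilbert space $\mathcal E_\alpha(\mathbb R^n)$, namely that $\mu'$ is the $\kappa_\alpha$-orthogonal projection of (the extension of) $\mu$ onto the convex cone $\mathcal E^+_\alpha(D^c;\mathbb R^n)$, extended from positive measures to signed ones by linearity. First I would treat the case $\mu\in\mathcal E_\alpha^+(D;\mathbb R^n)$: here \eqref{proj} (quoted from \cite{Fu5,FZ}) already gives $\mu'\in\mathcal E_\alpha^+(D^c;\mathbb R^n)$, so $\mu-\mu'\in\mathcal E_\alpha(\mathbb R^n)$, which is \eqref{l3-2}; and the standard orthogonal-projection characterization $\kappa_\alpha(\mu-\mu',\theta-\mu')\leqslant0$ for all $\theta$ in the cone, applied with $\theta=2\mu'$ and $\theta=0$, yields $\kappa_\alpha(\mu-\mu',\mu')=0$, i.e.\ $\|\mu-\mu'\|_\alpha^2=\|\mu\|_\alpha^2-\|\mu'\|_\alpha^2$, which is the second equality in \eqref{eq1-2-hen}. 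For the general signed extendible $\mu$ with extension in $\mathcal E_\alpha(\mathbb R^n)$, I would apply the positive case to $\mu^+$ and $\mu^-$ separately (each has finite $\alpha$-Riesz energy since $\mu\in\mathcal E_\alpha(\mathbb R^n)$ means $\kappa_\alpha(\mu^\pm,\mu^\pm)<\infty$), and then take differences using bilinearity of $\kappa_\alpha(\cdot,\cdot)$ on $\mathcal E_\alpha(\mathbb R^n)$, together with $\nu'=(\nu^+)'-(\nu^-)'$ from the definition of balayage of a signed measure.

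Next, for \eqref{l3-1} and the first equality $\|\mu\|_g^2=\|\mu-\mu'\|_\alpha^2$ in \eqref{eq1-2-hen}, I would invoke Lemma~\ref{l-hatg}, which gives $g(\cdot,\mu)=\kappa_\alpha(\cdot,\mu-\mu')$ n.e.\ on $D$. Since $\mu-\mu'\in\mathcal E_\alpha(\mathbb R^n)$ by \eqref{l3-2}, its $\alpha$-Riesz potential is finite n.e.\ on $\mathbb R^n$; moreover $\mu$ (as an extendible measure concentrated on $D$) does not charge the $c_\alpha$-negligible exceptional set, because $\mu^\pm\in\mathcal E_\alpha^+(\mathbb R^n)$ are $c_\alpha$-absolutely continuous \cite[Lemma~2.3.1]{F1}. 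Hence integrating \eqref{hatg} against $\mu^+$ and against $\mu^-$ is legitimate, and
\[
g(\mu,\mu)=\int g(\cdot,\mu)\,d\mu=\int\kappa_\alpha(\cdot,\mu-\mu')\,d\mu=\kappa_\alpha(\mu-\mu',\mu),
\]
where the last step again uses that $\mu$ ignores the exceptional null set. Finally, since $\mu'$ is supported by $D^c$ and, by \eqref{bal-eq}, $\kappa_\alpha(\cdot,\mu')=\kappa_\alpha(\cdot,\mu)$ n.e.\ on $D^c$ (hence $(\mu')'=\mu'$), one gets $\kappa_\alpha(\mu-\mu',\mu')=\kappa_\alpha(\mu-\mu',\mu-\mu'-(\mu-\mu')')$-type bookkeeping; cleaner: $\kappa_\alpha(\mu-\mu',\mu)=\kappa_\alpha(\mu-\mu',\mu-\mu')+\kappa_\alpha(\mu-\mu',\mu')$ and the second summand is $0$ by the orthogonality already established, so $g(\mu,\mu)=\|\mu-\mu'\|_\alpha^2<\infty$, giving both \eqref{l3-1} and the first equality of \eqref{eq1-2-hen} at once.

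The main obstacle I anticipate is the careful handling of the exceptional sets of capacity zero: equation \eqref{hatg} only holds n.e.\ on $D$, not everywhere, so to integrate it against $\mu^\pm$ one must know that $\mu^\pm$ put no mass on sets of zero $\alpha$-Riesz capacity. This is exactly the $c_\alpha$-absolute continuity of measures of finite $\alpha$-Riesz energy (\cite[Lemma~2.3.1]{F1}), which applies because we are assuming the extension of $\mu$ lies in $\mathcal E_\alpha(\mathbb R^n)$; I would also need the analogous absolute continuity of $\mu'$ (from \cite[Corollaries~3.19, 3.20]{FZ}) and of $\mu-\mu'$ to pass freely between ``n.e.'' statements and genuine integral identities. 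A secondary, purely bookkeeping, point is confirming that the signed-measure balayage $\nu\mapsto\nu'$ really is the linear extension of the positive-measure projection, so that orthogonality relations proved for $\mu^\pm$ recombine correctly; this is immediate from the definition $\nu'=(\nu^+)'-(\nu^-)'$ and bilinearity, but should be stated explicitly to justify \eqref{eq1-2-hen} for signed $\mu$.
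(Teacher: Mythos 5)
Your overall architecture coincides with the paper's: combine Lemma~\ref{l-hatg} with the orthogonality $\kappa_\alpha(\mu-\mu',\mu')=0$ to get $g(\mu,\mu)=\kappa_\alpha(\mu-\mu',\mu)=\|\mu-\mu'\|^2_\alpha$, then expand to obtain the second equality in \eqref{eq1-2-hen}. Two steps, however, do not go through as you state them. First, deriving the orthogonality for \emph{signed} $\mu$ from the positive case ``by bilinearity'' fails: writing $\mu'=(\mu^+)'-(\mu^-)'$ and expanding $\kappa_\alpha(\mu-\mu',\mu')$ produces the cross terms $\kappa_\alpha\bigl(\mu^+-(\mu^+)',(\mu^-)'\bigr)$ and $\kappa_\alpha\bigl(\mu^--(\mu^-)',(\mu^+)'\bigr)$, which are not controlled by the two relations $\kappa_\alpha\bigl(\mu^\pm-(\mu^\pm)',(\mu^\pm)'\bigr)=0$; the variational inequality for the projection onto the convex cone only gives $\kappa_\alpha\bigl(\mu^+-(\mu^+)',\sigma\bigr)\leqslant0$ for $\sigma$ in the cone, hence a one-sided bound on the cross terms, not their vanishing. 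What does make them vanish---and what the paper uses directly for the signed measure---is \eqref{bal-eq}: $\kappa_\alpha(\cdot,\mu-\mu')=0$ n.e.\ on $D^c$, combined with the $c_\alpha$-absolute continuity of $\mu'$ (which is carried by $D^c$), gives $\kappa_\alpha(\mu-\mu',\mu')=0$ in one stroke. You mention this mechanism in passing; it should replace, not merely supplement, the bilinearity step.

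Second, obtaining \eqref{l3-1} as a byproduct of $g(\mu,\mu)=\|\mu-\mu'\|^2_\alpha<\infty$ is not sufficient for signed $\mu$: membership in $\mathcal E_g(D)$ requires $g(\mu^+,\mu^+)$, $g(\mu^-,\mu^-)$ \emph{and} $g(\mu^+,\mu^-)$ to be separately finite, whereas your computation controls only the alternating combination (plus, via the positive case, the two diagonal terms). The mixed term needs the pointwise inequality \eqref{g-ineq}, $g^\alpha_D(x,y)<\kappa_\alpha(x,y)$ on $D\times D$, which gives $g(\mu^+,\mu^-)\leqslant\kappa_\alpha(\mu^+,\mu^-)<\infty$; this single line is in fact how the paper proves all of \eqref{l3-1}, before any integration takes place. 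Having \eqref{l3-1} up front also makes the integration of \eqref{hatg} against $\mu^\pm$ cleaner (the paper uses it to see that $g(\cdot,\mu)$ is finite $|\mu|$-a.e.); your alternative of discarding the $c_\alpha$-negligible exceptional set via the $c_\alpha$-absolute continuity of $\mu^\pm$ is legitimate, but proving \eqref{l3-1} first via \eqref{g-ineq} is shorter and removes any appearance of circularity.
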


\begin{proof}In view of the definition of a (signed) measure of finite energy (see Section~\ref{sec:princ}), we obtain (\ref{l3-1}) from the inequality\footnote{The strict inequality in (\ref{g-ineq}) is caused by our convention that $c_\alpha(D^c)>0$.}
\begin{equation}\label{g-ineq}g^\alpha_D(x,y)<\kappa_\alpha(x,y)\text{ \ for all \ }x,y\in D,\end{equation}
while (\ref{l3-2}) from \cite[Corollary~3.7]{FZ} (or \cite[Theorems~3.1 and~3.6]{FZ}).
According to Lemma~\ref{l-hatg} and footnote~\ref{RG}, $g(\cdot,\mu)$ is finite $c_g$-n.e.\ on $D$ and given by (\ref{hatg}), while by (\ref{l3-1}) the same holds $|\mu|$-a.e.\ on $D$ (see \cite[Lemma~2.3.1]{F1}). Integrating (\ref{hatg}) with respect to $\mu^\pm$, we therefore obtain by subtraction
\begin{equation}\label{l1}\infty>g(\mu,\mu)=\kappa_\alpha(\mu-\mu',\mu).\end{equation}
Since $\kappa_\alpha(\cdot,\mu-\mu')=0$ n.e.\ on $D^c$ by (\ref{bal-eq}) and since $\mu'$ is $c_\alpha$-absolutely continuous, we also have
\begin{equation}\label{l2}\kappa_\alpha(\mu-\mu',\mu')=0,\end{equation}
which results in the former equality in (\ref{eq1-2-hen}) when combined with (\ref{l1}). Due to (\ref{l3-2}), relation (\ref{l2}) takes the form $\|\mu'\|^2_\alpha=\kappa_\alpha(\mu,\mu')$,
and the former equality in (\ref{eq1-2-hen}) implies the latter.\end{proof}

\begin{lemma}\label{l-hen'-comp}
Assume that\/ $\mu\in\mathfrak M(D)$ has compact support\/ $S^\mu_D$. Then\/ $\mu\in\mathcal E_g(D)$ if and only if its extension belongs to\/ $\mathcal E_\alpha(\mathbb R^n)$.\end{lemma}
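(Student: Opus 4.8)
The plan is to handle the two implications separately, the first being essentially free. If the extension $\widehat\mu$ lies in $\mathcal E_\alpha(\mathbb R^n)$, then $\mu\in\mathcal E_g(D)$ by relation~(\ref{l3-1}) of Lemma~\ref{l-hen'}; here one only needs $\mu$ to be extendible, which holds since a compactly supported $\mu\in\mathfrak M(D)$ is bounded (see the remark after Definition~\ref{d-ext}), and compact support of $\mu$ plays no further role in this direction.

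For the converse — the substantive implication, where compact support is genuinely used — I would compare the two kernels pointwise on $S^\mu_D$. Put $K:=S^\mu_D$ and $d:=\mathrm{dist}(K,D^c)$; since $K$ is compact, $D^c$ is closed, and $K\cap D^c=\varnothing$, we have $d>0$. From the defining formula $g^\alpha_D(x,y)=\kappa_\alpha(x,\varepsilon_y)-\kappa_\alpha(x,\varepsilon_y^{D^c})$ one gets, for all $x,y\in D$,
\[\kappa_\alpha(x,y)=g^\alpha_D(x,y)+\kappa_\alpha\bigl(x,\varepsilon_y^{D^c}\bigr).\]
When $x,y\in K$, the balayage $\varepsilon_y^{D^c}$ is carried by $D^c$ and has total mass at most $1$, while $|x-z|^{\alpha-n}\leqslant d^{\alpha-n}$ for every $z\in D^c$ (as $n-\alpha>0$ and $|x-z|\geqslant d$); hence $\kappa_\alpha\bigl(x,\varepsilon_y^{D^c}\bigr)\leqslant d^{\alpha-n}$, and therefore $\kappa_\alpha(x,y)\leqslant g^\alpha_D(x,y)+d^{\alpha-n}$ for all $x,y\in K$.

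It then remains to integrate this inequality with respect to $\mu^+\otimes\mu^+$, $\mu^-\otimes\mu^-$ and $\mu^+\otimes\mu^-$, all carried by $K\times K$. Since $\mu\in\mathcal E_g(D)$ means exactly that $g(\mu^+,\mu^+)$, $g(\mu^-,\mu^-)$ and $g(\mu^+,\mu^-)$ are finite, and $\mu^\pm(K)<\infty$, we conclude that $\kappa_\alpha(\mu^+,\mu^+)$, $\kappa_\alpha(\mu^-,\mu^-)$ and $\kappa_\alpha(\mu^+,\mu^-)$ are all finite, i.e.\ $\widehat\mu\in\mathcal E_\alpha(\mathbb R^n)$. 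The only delicate point is the uniform bound $\kappa_\alpha(\cdot,\varepsilon_y^{D^c})\leqslant d^{\alpha-n}$ on $K$, which rests on $d=\mathrm{dist}(K,D^c)>0$ and on the elementary fact that a Riesz balayage does not increase total mass; everything else is routine manipulation with the definition of finite energy. In particular, compact support of $\mu$ enters only through the strict positivity of that distance.
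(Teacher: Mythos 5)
Your proof is correct, and for the substantive (``only if'') direction it takes a slightly different route from the paper's. The paper reduces to positive $\mu$, invokes the identity $g(\mu,\mu)=\kappa_\alpha(\mu-\mu',\mu)$ from (\ref{l1}), and bounds the cross term $\kappa_\alpha(\mu,\mu')$ by observing that $\kappa_\alpha(\cdot,\mu')$ is continuous on $D$, hence bounded on the compact set $S^\mu_D$ (this is (\ref{comp-f-e})). You instead work at the level of the kernels themselves, deriving the uniform pointwise bound $\kappa_\alpha(x,y)\leqslant g^\alpha_D(x,y)+d^{\alpha-n}$ on $S^\mu_D\times S^\mu_D$ from the defining formula for $g^\alpha_D$, the inequality $\varepsilon_y^{D^c}(\mathbb R^n)\leqslant1$ (mass non-increase under $\alpha$-Riesz balayage, $\alpha\leqslant2$), and the positive distance $d$ from $S^\mu_D$ to $D^c$; integrating against $\mu^\pm\otimes\mu^\pm$ then finishes. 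The two arguments are cousins --- integrating your pointwise identity against $\mu\otimes\mu$ and using (\ref{int-repr}) recovers exactly the paper's $\kappa_\alpha(\mu,\mu)=g(\mu,\mu)+\kappa_\alpha(\mu,\mu')$ --- but your version is more self-contained and quantitative (an explicit constant $d^{\alpha-n}$ rather than an appeal to continuity of the swept potential), at the modest cost of invoking the mass-non-increase property of balayage, which the paper never states in that form (it only records the equality case in Theorem~\ref{bal-mass-th}). Both proofs use compactness of $S^\mu_D$ in the same essential way, namely through its positive distance to $D^c$; and your ``if'' direction via (\ref{l3-1}) is exactly the paper's.
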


\begin{proof}According to Lemma~\ref{l-hen'}, it is enough to establish the 'only if' part of the lemma. We may clearly assume that $\mu$ is positive. Since $\kappa_\alpha(\cdot,\mu')$ is continuous on $D$ and hence bounded on the compact set $S_D^{\mu}$, we have
\begin{equation}\label{comp-f-e}\kappa_\alpha(\mu,\mu')<\infty.\end{equation} But $g(\mu,\mu)$ is finite by assumption, and therefore likewise as in the preceding proof relation (\ref{l1}) holds. Combining (\ref{l1}) with (\ref{comp-f-e}) yields $\mu\in\mathcal E_\alpha(\mathbb R^n)$.
\end{proof}

\section{Minimum $\alpha$-Riesz energy problems for generalized condensers}\label{sec:pr1}
\subsection{An unconstrained weighted minimum $\alpha$-Riesz energy problem}\label{sec-def}
Consider a generalized condenser $\mathbf A=(A_i)_{i\in I}$ in $\mathbb R^n$ with $p:={\rm Card}\,I\geqslant 2$ such that $I^+:=\{1,\ldots,p-1\}$ and $I^-:=\{p\}$ (see Section~\ref{sec:cond}). Also require that the negative plate $A_p$ is {\it closed\/} in $\mathbb R^n$, while all the positive plates $A_j$, $j\in I^+$, are {\it relatively closed\/} subsets of the (open) set $D:=A_p^c=\mathbb R^n\setminus A_p$.\footnote{By \cite[Chapter~I, Section~3, Proposition~5]{B1}, this is in agreement with our general requirement that the sets $A_i$, $i\in I$ be locally closed in $\mathbb R^n$ (see the beginning of Section~\ref{sec:cond}).}  For the sake of simplicity, in all that follows assume that $D$ is a domain.
Recall that, by convention (\ref{nonzero'}), $c_\alpha(A_i)>0$ for all $i\in I$.

When speaking of an external field $\mathbf{f}=(f_i)_{i\in I}$ acting on the vector measures of the class $\mathcal E^+_\alpha(\mathbf A;\mathbb R^n)$, we shall always tacitly assume that
either Case~I or Case~II holds, where
\begin{itemize}
\item[\rm I.] {\it $f_i\in\Psi(\mathbb R^n)$ for every\/ $i\in I$ and  moreover}
\begin{equation}\label{fp}f_p=0\quad\text{\it n.e.\ on \ } A_p.\end{equation}
\item[\rm II.] {\it $f_i=s_i\kappa_{\alpha}(\cdot,\zeta-\zeta')$ for every\/ $i\in I$, where\/ $\zeta$ is an extendible\/ {\rm(}signed\/{\rm)} Radon measure on\/ $D$ with\/ $\kappa_\alpha(\zeta,\zeta)<\infty$\/}.
\end{itemize}

Observe that (\ref{fp}) holds also in Case II (see (\ref{bal-eq})). Since a nonzero positive scalar measure of finite energy does not charge any set of zero capacity \cite[Lemma~2.3.1]{F1}, we thus see that under the stated assumptions no external field acts on the measures from $\mathcal E^+_\alpha(A_p;\mathbb R^n)$.
Furthermore, $D^c$ is $\nu$-negligible for any $\nu\in\mathfrak M^+(A_j;\mathbb R^n)$, $j\in I^+$ (see Section~\ref{sec:princ}). We are thus led to the following conclusion.

\begin{lemma}\label{conclusion}For any\/ $\boldsymbol{\mu}\in\mathcal E_\alpha^+(\mathbf A;\mathbb R^n)$, $G_{\alpha,\mathbf{f}}(\boldsymbol{\mu})$ can\/ {\it(}equivalently\/{\rm)} be defined as\footnote{Cf.\ (\ref{wen'}) with $X=\mathbb R^n$ and $\kappa=\kappa_\alpha$.}
\begin{equation}\label{plus}G_{\alpha,\mathbf{f}}(\boldsymbol{\mu})=\kappa_\alpha(\boldsymbol{\mu},\boldsymbol{\mu})+2\langle\mathbf{f},\boldsymbol{\mu}\rangle=
\kappa_\alpha(\boldsymbol{\mu},\boldsymbol{\mu})+2\langle\mathbf{f}^+,\boldsymbol{\mu}^+\rangle,\end{equation}
where\/ ${\mathbf f}^+:=(f_j|_D)_{j\in I^+}$ and\/ ${\boldsymbol\mu}^+:=(\mu^j)_{j\in I^+}$.\end{lemma}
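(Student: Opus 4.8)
The first equality in (\ref{plus}) is merely (\ref{wen'}) with $X=\mathbb R^n$ and $\kappa=\kappa_\alpha$, so the entire content of the lemma is the identity $\langle\mathbf f,\boldsymbol\mu\rangle=\langle\mathbf f^+,\boldsymbol\mu^+\rangle$, that is,
\[\sum_{i\in I}\,\int f_i\,d\mu^i=\sum_{j\in I^+}\,\int f_j|_D\,d\mu^j.\]
The plan is to verify separately that the single term with $i=p$ on the left vanishes and that each term with $i=j\in I^+$ equals the corresponding term on the right; since both sums are finite there is no rearrangement issue (in Case~I all integrands are $\geqslant0$, and in Case~II every $\int|f_j|\,d\mu^j$ is finite, being bounded by $\|\mu^j\|_\alpha\,\bigl\|(\zeta-\zeta')^++(\zeta-\zeta')^-\bigr\|_\alpha$ via the Cauchy--Schwarz inequality for $\kappa_\alpha$ together with Lemma~\ref{l-hen'}).

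For the $p$-term I would first record that $f_p=0$ n.e.\ on $A_p$ in either case: in Case~I this is the hypothesis (\ref{fp}), while in Case~II it follows from $f_p=s_p\kappa_\alpha(\cdot,\zeta-\zeta')$ and the fact that $\kappa_\alpha(\cdot,\zeta')=\kappa_\alpha(\cdot,\zeta)$ n.e.\ on $D^c=A_p$, which is (\ref{bal-eq}) in its version for the signed measure $\zeta$. Let $N$ be the subset of $A_p$ on which $f_p$ is undefined or nonzero; then $c_\alpha(N)=0$, and I would check that $N$ is moreover a Borel set --- the intersection of $A_p$ with the open set $\{f_p>0\}$ in Case~I (where $f_p\in\Psi(\mathbb R^n)$ is l.s.c.\ and $\geqslant0$), and in Case~II the union of a Borel set of zero capacity with a Borel set, because off the former $f_p$ is the difference of the two l.s.c.\ $\alpha$-Riesz potentials $\kappa_\alpha(\cdot,(\zeta-\zeta')^\pm)$ of the finite-energy measures $(\zeta-\zeta')^\pm$ (finiteness of the energy by Lemma~\ref{l-hen'}). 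Since $\mu^p\in\mathcal E^+_\alpha(A_p;\mathbb R^n)$ has finite energy, it does not charge the Borel set $N$ of zero capacity by \cite[Lemma~2.3.1]{F1}; as $f_p$ vanishes on $A_p\setminus N$ and $\mu^p$ is carried by $A_p$, this gives $\int f_p\,d\mu^p=0$.

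For a term with $j\in I^+$ I would use that the plate $A_j$ is a relatively closed subset of the domain $D=A_p^c$, so that $A_j\subset D$; then $\mu^j$ is carried by $D$, the set $D^c$ is $\mu^j$-negligible (as already remarked before the lemma), and hence $\int f_j\,d\mu^j=\int f_j|_D\,d\mu^j$. Summing over $i\in I$ now yields the displayed identity and therefore the second equality in (\ref{plus}). The only step that is more than a direct unwinding of definitions is the passage from $f_p=0$ n.e.\ on $A_p$ to $\int f_p\,d\mu^p=0$: a set of zero capacity is in general only known to have zero \emph{inner} $\mu^p$-measure, so one must exhibit the exceptional set $N$ as a Borel (or at least $\mu^p$-measurable) set before invoking the $c_\alpha$-absolute continuity of the finite-energy measure $\mu^p$. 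I expect this to be the only mildly technical point; the remainder is routine.
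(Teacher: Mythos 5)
Your proposal is correct and follows essentially the same route as the paper, which derives the lemma from the two observations stated immediately before it: $f_p=0$ n.e.\ on $A_p$ in both cases (via (\ref{fp}), resp.\ (\ref{bal-eq})) together with the $c_\alpha$-absolute continuity of the finite-energy measure $\mu^p$ \cite[Lemma~2.3.1]{F1}, and the $\mu^j$-negligibility of $D^c$ for $j\in I^+$. Your extra care in exhibiting the exceptional set as a measurable set (so that zero inner measure upgrades to zero measure) is a legitimate refinement of a point the paper leaves implicit, not a different argument.
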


If Case II holds, then for every $\boldsymbol{\mu}\in\mathcal E_\alpha^+(\mathbf A;\mathbb R^n)$ we get from (\ref{defR}) and (\ref{enpos})
\[G_{\alpha,\mathbf{f}}(\boldsymbol{\mu})=\|R\boldsymbol{\mu}\|^2_\alpha+2\sum_{i\in I}\,s_i\kappa_\alpha(\zeta-\zeta',\mu^i)=\|R\boldsymbol{\mu}\|^2_\alpha+2\kappa_\alpha(\zeta-\zeta',R\boldsymbol{\mu}),\]
hence
\begin{equation}\label{GCII}\infty>G_{\alpha,\mathbf{f}}(\boldsymbol{\mu})=
\|R\boldsymbol{\mu}+\zeta-\zeta'\|_\alpha^2-\|\zeta-\zeta'\|_\alpha^2\geqslant-\|\zeta-\zeta'\|_\alpha^2>-\infty.\end{equation}

Thus in either Case I or Case II
\begin{equation*}\label{Glowerb}G_{\alpha,\mathbf f}(\boldsymbol\mu)\geqslant-M>-\infty\text{ \ for all \ }\boldsymbol{\mu}\in\mathcal E_\alpha^+(\mathbf A;\mathbb R^n),\end{equation*}
which is clear from (\ref{enpos}) and (\ref{plus}) if Case I holds, or from (\ref{GCII}) otherwise.

Fix a numerical vector $\mathbf a=(a_i)_{i\in I}$ with $a_i>0$, $i\in I$. Using the notations of Section~\ref{sec:unc} with $X=\mathbb R^n$ and $\kappa=\kappa_\alpha$, we obtain from the preceding display
\begin{equation}\label{est-bel}G_{\alpha,\mathbf f}(\mathbf A,\mathbf a;\mathbb R^n):=\inf_{\boldsymbol{\mu}\in\mathcal E_{\alpha,\mathbf{f}}^+(\mathbf A,\mathbf a;\mathbb R^n)}\,G_{\alpha,\mathbf f}(\boldsymbol{\mu})>-\infty.\end{equation}
If $\mathcal E_{\alpha,\mathbf{f}}^+(\mathbf A,\mathbf a;\mathbb R^n)$ is nonempty, or equivalently if $G_{\alpha,\mathbf f}(\mathbf A,\mathbf a;\mathbb R^n)<\infty$,
then we can consider (the unconstrained) Problem~\ref{prgen} on the existence of $\boldsymbol\lambda_{\mathbf A}\in\mathcal E_{\alpha,\mathbf{f}}^+(\mathbf A,\mathbf a;\mathbb R^n)$ with
\[G_{\alpha,\mathbf f}(\boldsymbol\lambda_{\mathbf A})=G_{\alpha,\mathbf f}(\mathbf A,\mathbf a;\mathbb R^n).\]
The following theorem shows that, in general, this problem has {\it no\/} solution.

\begin{theorem}\label{pr1uns}Suppose that\/ $D^c$ is not\/ $\alpha$-thin at infinity, $I^+=\{1\}$, $c_{g^\alpha_D}(A_1)=\infty$, and let\/ $\mathbf a=\mathbf 1$, $\mathbf f=\mathbf 0$. Then \[G_{\alpha,\mathbf f}(\mathbf A,\mathbf a;\mathbb R^n)=\bigl[c_{g^\alpha_D}(A_1)\bigr]^{-1}=0;\]
hence\/ $G_{\alpha,\mathbf f}(\mathbf A,\mathbf a;\mathbb R^n)$ cannot be an actual minimum because\/ $\boldsymbol{0}\notin\mathcal E^+_{\alpha,\mathbf f}(\mathbf A,\mathbf a;\mathbb R^n)$.\end{theorem}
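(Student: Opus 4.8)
The plan is to reduce the vector $\alpha$-Riesz energy problem for the two-plate condenser $\mathbf A=(A_1,A_2)$ with $A_2=D^c$ to a scalar $\alpha$-Green energy problem on $D$, and then to exploit the hypothesis $c_{g_D^\alpha}(A_1)=\infty$. First I would take an arbitrary $\boldsymbol\mu=(\mu^1,\mu^2)\in\mathcal E_\alpha^+(\mathbf A,\mathbf 1;\mathbb R^n)$; since $\mathbf f=\mathbf 0$, its energy is $G_{\alpha,\mathbf f}(\boldsymbol\mu)=\kappa_\alpha(\boldsymbol\mu,\boldsymbol\mu)=\|R\boldsymbol\mu\|_\alpha^2$ by (\ref{enpos}), where $R\boldsymbol\mu=\mu^1-\mu^2$. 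The key point is that among all competitors $R$-equivalent to (or rather, sharing the same first component as) $\boldsymbol\mu$, the choice $\mu^2=(\mu^1)'$, the $\alpha$-Riesz balayage of $\mu^1$ onto $D^c$, is admissible and energy-minimizing: by Theorem~\ref{bal-mass-th}, since $D^c$ is not $\alpha$-thin at infinity, $(\mu^1)'(\mathbb R^n)=\mu^1(\mathbb R^n)=1$, so $((\mu^1)',$ stays in the normalized class; and by the projection property (\ref{proj}) together with Lemma~\ref{l-hen'}, $\|\mu^1-(\mu^1)'\|_\alpha^2=\|\mu^1\|_g^2$ is no larger than $\|\mu^1-\theta\|_\alpha^2$ for any other $\theta\in\mathcal E_\alpha^+(D^c;\mathbb R^n)$ of unit mass. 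Hence
\[
G_{\alpha,\mathbf f}(\mathbf A,\mathbf 1;\mathbb R^n)=\inf_{\mu^1\in\mathcal E_g^+(A_1,1;D)}\,\|\mu^1\|_g^2=\bigl[c_{g_D^\alpha}(A_1)\bigr]^{-1},
\]
the last equality being the definition (\ref{cap-def}) of capacity relative to the kernel $g=g_D^\alpha$ on $X=D$.

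There are two matching inequalities to nail down. For ``$\geqslant$'': given any admissible $\mu^1$ (relatively closed support in $D$, unit mass, finite $\alpha$-Green energy by Lemma~\ref{l-hen'-comp} or directly), the pair $(\mu^1,(\mu^1)')$ lies in $\mathcal E_\alpha^+(\mathbf A,\mathbf 1;\mathbb R^n)$ and realizes energy $\|\mu^1\|_g^2$, so the vector infimum is $\leqslant[c_{g_D^\alpha}(A_1)]^{-1}$. For ``$\leqslant$'': given any vector competitor $\boldsymbol\mu$, its first component $\mu^1$ is admissible for the scalar $g$-problem (after noting $\mu^1\in\mathcal E_g^+(A_1;D)$, which follows from $\mu^1\in\mathcal E_\alpha^+(\mathbb R^n)$ via Lemma~\ref{l-hen'}), and $\|R\boldsymbol\mu\|_\alpha^2=\|\mu^1-\mu^2\|_\alpha^2\geqslant\|\mu^1-(\mu^1)'\|_\alpha^2=\|\mu^1\|_g^2\geqslant[c_{g_D^\alpha}(A_1)]^{-1}$ by (\ref{proj}) and (\ref{eq1-2-hen}). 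Combining, $G_{\alpha,\mathbf f}(\mathbf A,\mathbf 1;\mathbb R^n)=[c_{g_D^\alpha}(A_1)]^{-1}$, and since $c_{g_D^\alpha}(A_1)=\infty$ by hypothesis, this equals $1/(+\infty)=0$ under the conventions fixed after (\ref{cap-def}).

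Finally, the infimum cannot be attained: an attaining $\boldsymbol\lambda_{\mathbf A}\in\mathcal E_{\alpha,\mathbf f}^+(\mathbf A,\mathbf 1;\mathbb R^n)$ would satisfy $\|R\boldsymbol\lambda_{\mathbf A}\|_\alpha^2=0$, hence by the energy principle $R\boldsymbol\lambda_{\mathbf A}=0$, i.e.\ $\lambda^1_{\mathbf A}=\lambda^2_{\mathbf A}$; but $\lambda^1_{\mathbf A}$ is carried by $A_1\subset D$ and $\lambda^2_{\mathbf A}$ by $A_2=D^c$, and $A_1\cap D^c=\varnothing$, forcing $\lambda^1_{\mathbf A}=\lambda^2_{\mathbf A}=0$, which contradicts the normalization $\lambda^i_{\mathbf A}(A_i)=1$. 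Equivalently, $\boldsymbol 0\notin\mathfrak M^+(\mathbf A,\mathbf 1;\mathbb R^n)$, so $\boldsymbol 0\notin\mathcal E_{\alpha,\mathbf f}^+(\mathbf A,\mathbf a;\mathbb R^n)$.

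The main obstacle I anticipate is the careful bookkeeping needed to pass between the scalar and vector formulations at the level of \emph{infima} rather than minimizers: I must check that every $\mu^1$ appearing as the first component of a finite-energy vector measure genuinely lies in $\mathcal E_g^+(A_1;D)$ (so the restriction to $D$ and the extendibility in Definition~\ref{d-ext} are compatible with $A_1$ being only relatively closed in $D$), and conversely that for a minimizing sequence in the $g$-problem the balayed pairs form a legitimate minimizing sequence for the vector problem — this is where Lemmas~\ref{l-hatg}--\ref{l-hen'-comp} and the mass-preservation Theorem~\ref{bal-mass-th} do the real work, and where the non-$\alpha$-thinness of $D^c$ is indispensable (without it, $(\mu^1)'$ could have mass $<1$ and would not be admissible).
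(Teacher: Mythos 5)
Your overall strategy coincides with the paper's: reduce the two-plate $\alpha$-Riesz problem to the scalar $\alpha$-Green problem on $D$ by pairing a measure $\mu^1$ on $A_1$ with its balayage $(\mu^1)'$ onto $D^c$, use Theorem~\ref{bal-mass-th} (non-$\alpha$-thinness at infinity) to keep the normalization, and use $\|\mu^1\|_{g}^2=\|\mu^1-(\mu^1)'\|_\alpha^2$ to transfer energies; the non-attainment argument via the energy principle and the disjointness of $A_1$ and $D^c$ is also exactly the paper's. However, there is a genuine gap in your ``$\geqslant$'' direction as written. You take an \emph{arbitrary} $\mu^1\in\mathcal E_{g}^+(A_1,1;D)$ and claim that $(\mu^1,(\mu^1)')$ lies in $\mathcal E_\alpha^+(\mathbf A,\mathbf 1;\mathbb R^n)$ with $\alpha$-energy $\|\mu^1\|_g^2$. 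But finite $\alpha$-Green energy does \emph{not} imply finite $\alpha$-Riesz energy: the implication in Lemma~\ref{l-hen'} runs the other way (from $\mathcal E_\alpha$ to $\mathcal E_g$, via (\ref{g-ineq})), and Lemma~\ref{l-hen'-comp}, which you invoke, gives the converse only for measures of \emph{compact} support in $D$. For a $\mu^1$ with noncompact support in $D$ and $\|\mu^1\|_g<\infty$ but $\|\mu^1\|_\alpha=\infty$, the pair $(\mu^1,(\mu^1)')$ is not admissible for the vector problem, and the identity (\ref{eq1-2-hen}) is not available.

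The repair is exactly the device the paper uses: exhaust $A_1$ by compacts $K_\ell\uparrow A_1$, so that $c_g(K_\ell)\uparrow c_g(A_1)=\infty$ by (\ref{compact}), and take for $\mu^1$ the $g$-capacitary measure $\lambda_\ell$ on $K_\ell$ with $\|\lambda_\ell\|_g^2=1/c_g(K_\ell)$. Since $\lambda_\ell$ has compact support, Lemma~\ref{l-hen'-comp} gives $\lambda_\ell\in\mathcal E_\alpha^+(\mathbb R^n)$, Lemma~\ref{l-hen'} then gives $\|\lambda_\ell\|_g^2=\|\lambda_\ell-\lambda_\ell'\|_\alpha^2$, and Theorem~\ref{bal-mass-th} makes $(\lambda_\ell,\lambda_\ell')$ admissible; letting $\ell\to\infty$ yields $G_{\alpha,\mathbf f}(\mathbf A,\mathbf a;\mathbb R^n)\leqslant 1/c_g(K_\ell)\to0$, and the lower bound $\geqslant0$ is immediate from (\ref{enpos}). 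Note also that your converse inequality (the lower bound by $[c_{g^\alpha_D}(A_1)]^{-1}$ via the projection property (\ref{proj})) is correct but superfluous for this theorem, since the right-hand side is $0$ and nonnegativity already follows from strict positive definiteness.
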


\begin{proof} Since $G_{\alpha,\mathbf f}(\boldsymbol\mu)=\kappa_\alpha(\boldsymbol\mu,\boldsymbol\mu)$ because of $\mathbf f=\mathbf 0$, Problem~\ref{prgen} reduces to the problem of minimizing $\kappa_\alpha(\boldsymbol\mu,\boldsymbol\mu)$ over $\mathcal E_\alpha^+(\mathbf A,\mathbf a;\mathbb R^n)$. Thus by (\ref{enpos})
\begin{equation}\label{geq}G_{\alpha,\mathbf f}(\mathbf A,\mathbf a;\mathbb R^n)\geqslant0.\end{equation}
Consider compact sets $K_\ell\subset A_1$, $\ell\in\mathbb N$, such that $K_\ell\uparrow A_1$ as $\ell\to\infty$. By (\ref{compact}),
\begin{equation}\label{incr}c_g(K_\ell)\uparrow c_g(A_1)=\infty\text{ \ as \ }\ell\to\infty,\end{equation}
and hence there is no loss of generality in assuming that $c_g(K_\ell)>0$ for every $\ell\in\mathbb N$. Furthermore, since the $\alpha$-Green kernel $g$ is strictly positive definite and moreover perfect (Remark~\ref{rem:clas}), we see from (\ref{compact-fin}) that $c_g(K_\ell)<\infty$ and, by Remark~\ref{remark}, there exists a (unique) $g$-capacitary measure $\lambda_\ell$ on $K_\ell$, i.e.\ $\lambda_\ell\in\mathcal E_g^+(K_\ell,1;D)$ with
\[\|\lambda_\ell\|^2_g=1/c_g(K_\ell)<\infty.\]
According to Lemma~\ref{l-hen'-comp} with $\lambda_\ell$ in place of $\mu$, $\kappa_\alpha(\lambda_\ell,\lambda_\ell)$ is finite along with $g(\lambda_\ell,\lambda_\ell)$. Hence, by Lemma~\ref{l-hen'},
\[\|\lambda_\ell\|^2_g=\|\lambda_\ell-\lambda_\ell'\|^2_\alpha.\]
Applying Theorem~\ref{bal-mass-th}, we get $\boldsymbol\mu_\ell:=(\lambda_\ell,\lambda_\ell')\in\mathcal E_{\alpha,\mathbf{f}}^+(\mathbf A,\mathbf a;\mathbb R^n)$, which together with the two preceding displays and  (\ref{enpos}) and (\ref{geq}) gives
\[1/c_g(K_\ell)=\|\lambda_\ell-\lambda_\ell'\|^2_\alpha=\kappa_\alpha(\boldsymbol\mu_\ell,\boldsymbol\mu_\ell)\geqslant G_{\alpha,\mathbf f}(\mathbf A,\mathbf a;\mathbb R^n)\geqslant0.\]
Letting here $\ell\to\infty$, we obtain the theorem from (\ref{incr}).
\end{proof}

Using the electrostatic interpretation, which is possible for the Coulomb kernel $|x-y|^{-1}$ on $\mathbb R^3$, we say that under the hypotheses of Theorem~\ref{pr1uns} a {\it short-circuit occurs\/} between the oppositely signed plates of the generalized condenser $\mathbf A$. It is therefore meaningful to ask what kinds of additional requirements on the objects in question will prevent this blow-up effect, and secure that a solution to the corresponding minimum $\alpha$-Riesz energy problem does exist. To this end we have succeeded in working out a substantive theory by imposing a proper upper constraint on the vector measures under consideration.

\subsection{A constrained weighted minimum $\alpha$-Riesz energy problem}\label{sec-form}
Let $\mathbf A$, $\mathbf a$ and $\mathbf f$ be as at the beginning of Section~\ref{sec-def}. {\it In the rest of the paper we assume additionally that\/ $A_p$ $({}=D^c)$ is not\/ $\alpha$-thin at infinity and}
\begin{equation}\label{ap}a_p=\sum_{j\in I^+}\,a_j.\end{equation}
Using notation of Section~\ref{sec:constr}, fix $\boldsymbol\xi=(\xi^i)_{i\in I}$ with
\begin{equation}\label{constr1}\xi^j\in\mathfrak C(A_j;\mathbb R^n)\cap\mathcal E^+_\alpha(A_j;\mathbb R^n)\text{ \ for all \ }j\in I^+,\text{ \ and \ }\xi^p=\infty.\end{equation}
Unless explicitly stated otherwise, for these $\mathbf A$, $\mathbf a$, $\mathbf f$, and $\boldsymbol\xi$ we shall always require that\footnote{  $G_{\alpha,\mathbf{f}}^{\boldsymbol\xi}(\mathbf A,\mathbf a;\mathbb R^n)$ is then actually finite, for $G_{\alpha,\mathbf{f}}^{\boldsymbol\xi}(\mathbf A,\mathbf a;\mathbb R^n)>-\infty$ by $\mathcal E_{\alpha,\mathbf{f}}^{\boldsymbol\xi}(\mathbf A,\mathbf a;\mathbb R^n)\subset\mathcal E_{\alpha,\mathbf{f}}^+(\mathbf A,\mathbf a;\mathbb R^n)$ and~(\ref{est-bel}).\label{foot-G-finite}}
\begin{equation}\label{cggen}G_{\alpha,\mathbf{f}}^{\boldsymbol\xi}(\mathbf A,\mathbf a;\mathbb R^n)<\infty.\end{equation}

{\it The main purpose of this paper is to analyze Problem\/~{\rm\ref{pr2gen}} on the existence of\/ $\boldsymbol\lambda_{\mathbf A}^{\boldsymbol\xi}\in\mathcal E_{\alpha,\mathbf{f}}^{\boldsymbol\xi}(\mathbf A,\mathbf a;\mathbb R^n)$ with\/} $G_{\alpha,\mathbf f}(\boldsymbol\lambda_{\mathbf A}^{\boldsymbol\xi})=G_{\alpha,\mathbf{f}}^{\boldsymbol\xi}(\mathbf A,\mathbf a;\mathbb R^n)$. Recall that $\mathfrak S^{\boldsymbol{\xi}}_{\alpha,\mathbf f}(\mathbf A,\mathbf a;\mathbb R^n)$ denotes the class of all solutions to this problem (provided these exist). By Lemma~\ref{lemma:unique:}, any two solutions are $R$-equivalent, while by  Corollary~\ref{cor-unique}, $\mathfrak S^{\boldsymbol{\xi}}_{\alpha,\mathbf f}(\mathbf A,\mathbf a;\mathbb R^n)$ reduces to a single element whenever the $A_j$, $j\in I^+$, are mutually essentially disjoint.

Conditions on $\mathbf A$, $\mathbf f$ and $\boldsymbol\xi$ which guarantee that (\ref{cggen}) holds are given in the following Lemma~\ref{suff-fin}. Write
\begin{equation}\label{Acirc}A_j^\circ:=\bigl\{x\in A_j: \ |f_j(x)|<\infty\bigr\},\quad j\in I^+.\end{equation}

\begin{lemma}\label{suff-fin}
Relation\/ {\rm(\ref{cggen})} holds if either Case\/~{\rm II} takes place, or\/ {\rm(}in the presence of Case\/~{\rm I}{\rm)} if
\begin{equation}\label{acirc}\xi^j(A_j\setminus A_j^\circ)=0\text{ \ for all \ }j\in I^+.\end{equation}
\end{lemma}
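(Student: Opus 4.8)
The plan is to exhibit, in each of the two cases, an explicit vector measure belonging to $\mathcal E_{\alpha,\mathbf f}^{\boldsymbol\xi}(\mathbf A,\mathbf a;\mathbb R^n)$, which by definition forces $G_{\alpha,\mathbf f}^{\boldsymbol\xi}(\mathbf A,\mathbf a;\mathbb R^n)<\infty$; the finiteness of the infimum from below is already recorded in footnote~\ref{foot-G-finite}. The natural candidate is built from rescaled restrictions of the constraints $\xi^j$, $j\in I^+$, together with the balayage of their sum onto $D^c$ as the $p$-th component.

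First I would construct the positive part. For each $j\in I^+$ choose a compact set $K_j\subset A_j^\circ$ (in Case~I, using $(\ref{acirc})$ together with $(\ref{compact})$ to arrange $\xi^j(K_j)>0$; in Case~II no restriction on $A_j^\circ$ is needed since then every $f_j$ is a potential of a finite-energy measure and hence finite n.e., so one simply takes $K_j\subset A_j$ with $\xi^j(K_j)>0$, possible by $(\ref{nonzero'})$ and $\xi^j\in\mathcal E_\alpha^+(A_j;\mathbb R^n)$ being nonzero on $A_j$). Put $\mu^j:=(a_j/\xi^j(K_j))\,\xi^j|_{K_j}$, so that $\mu^j\in\mathfrak M^+(A_j,a_j;\mathbb R^n)$, $\mu^j\leqslant\xi^j$ (here I use $a_j<\xi^j(A_j)$, part of $\xi^j\in\mathfrak C(A_j;\mathbb R^n)$, so the scaling factor can be taken $\leqslant1$ after possibly enlarging $K_j$ toward $A_j$), and $\kappa_\alpha(\mu^j,\mu^j)<\infty$ because $\xi^j\in\mathcal E_\alpha^+(A_j;\mathbb R^n)$ and a restriction of a finite-energy measure has finite energy. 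Set $\boldsymbol\mu^+:=(\mu^j)_{j\in I^+}$ and $\sigma:=\sum_{j\in I^+}\mu^j\in\mathcal E_\alpha^+(D;\mathbb R^n)$ (finite energy by the triangle inequality in $\mathcal E_\alpha(\mathbb R^n)$), which is moreover bounded. Define the $p$-th component to be the $\alpha$-Riesz balayage $\mu^p:=\sigma'=\sigma^{D^c}$. Since $\sigma$ is bounded and $D^c$ is not $\alpha$-thin at infinity, Theorem~\ref{bal-mass-th} gives $\mu^p(\mathbb R^n)=\sigma(\mathbb R^n)=\sum_{j\in I^+}a_j=a_p$ by $(\ref{ap})$; and $\mu^p\in\mathcal E_\alpha^+(D^c;\mathbb R^n)$ by the projection property $(\ref{proj})$ (or directly $(\ref{l3-2})$ applied to $\sigma$). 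Thus $\boldsymbol\mu:=(\mu^i)_{i\in I}\in\mathcal E_\alpha^+(\mathbf A,\mathbf a;\mathbb R^n)\cap\mathfrak M^{\boldsymbol\xi}(\mathbf A;\mathbb R^n)$, using $\xi^p=\infty$ for the $p$-th constraint.

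It remains to check $G_{\alpha,\mathbf f}(\boldsymbol\mu)<\infty$, i.e.\ $\boldsymbol\mu\in\mathcal E_{\alpha,\mathbf f}^+(\mathbf A;\mathbb R^n)$, equivalently $\langle\mathbf f,\boldsymbol\mu\rangle<\infty$ (the energy term $\kappa_\alpha(\boldsymbol\mu,\boldsymbol\mu)$ is finite since all components have finite $\alpha$-Riesz energy, by $(\ref{enpos})$ or Lemma~\ref{l-Ren}). In Case~I, by Lemma~\ref{conclusion} we only need $\langle\mathbf f^+,\boldsymbol\mu^+\rangle=\sum_{j\in I^+}\int f_j\,d\mu^j<\infty$; but $\mu^j$ is carried by the compact set $K_j\subset A_j^\circ$ on which $f_j$ is finite, and $f_j$ being l.s.c.\ it is bounded below on $\mathbb R^n$ (nonnegative away from compacts) and bounded above on the compact $K_j$ by upper semicontinuity — wait, $f_j$ is only l.s.c., so boundedness above on $K_j$ is not automatic; here I instead invoke $(\ref{acirc})$ more carefully, choosing $K_j$ inside a level set $\{f_j\leqslant c_j\}$ of positive $\xi^j$-measure, which exists because $\xi^j(A_j\setminus A_j^\circ)=0$ forces $\xi^j\bigl(\bigcup_{c}\{f_j\leqslant c\}\bigr)=\xi^j(A_j^\circ)=\xi^j(A_j)>0$. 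Then $\int f_j\,d\mu^j\leqslant c_j\,a_j<\infty$. In Case~II, $f_i=s_i\kappa_\alpha(\cdot,\zeta-\zeta')$ with $\zeta-\zeta'\in\mathcal E_\alpha(\mathbb R^n)$ (by $(\ref{l3-2})$), and the computation leading to $(\ref{GCII})$ shows directly $G_{\alpha,\mathbf f}(\boldsymbol\mu)=\|R\boldsymbol\mu+\zeta-\zeta'\|_\alpha^2-\|\zeta-\zeta'\|_\alpha^2<\infty$ because $R\boldsymbol\mu\in\mathcal E_\alpha(\mathbb R^n)$. In both cases $\boldsymbol\mu$ witnesses $(\ref{cggen})$.

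\textbf{Main obstacle.} The delicate point is Case~I: ensuring the positive components can be chosen with finite $\alpha$-Riesz energy \emph{and} with $\int f_j\,d\mu^j<\infty$ simultaneously, while still carrying the prescribed mass $a_j$ and respecting $\mu^j\leqslant\xi^j$. The hypothesis $(\ref{acirc})$ is exactly what makes the level sets $\{f_j\leqslant c\}$ exhaust $A_j$ in $\xi^j$-measure, so that a suitable restriction-and-rescaling of $\xi^j$ does the job; verifying that the rescaling factor stays $\leqslant1$ (needed for $\mu^j\leqslant\xi^j$) uses $\xi^j(A_j)>a_j$ from the definition of a constraint. Getting the mass-matching of the negative plate right — that is, that $\mu^p=\sigma'$ has total mass exactly $a_p$ — is where the standing assumptions that $D^c$ is not $\alpha$-thin at infinity and $(\ref{ap})$ enter, via Theorem~\ref{bal-mass-th}.
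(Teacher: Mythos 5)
Your proposal is correct and follows essentially the same route as the paper: in Case I you restrict $\xi^j$ to a compact subset of a sublevel set of $f_j$ inside $A_j^\circ$ of $\xi^j$-measure exceeding $a_j$ (so the rescaling factor is $\leqslant1$ and $\int f_j\,d\mu^j$ is finite), which is exactly the paper's construction. The only (harmless) deviations are that you take $\mu^p$ to be the balayage $\bigl(\sum_{j\in I^+}\mu^j\bigr)'$ where the paper simply takes any element of $\mathcal E_\alpha^+(A_p,a_p;\mathbb R^n)$ (possible since $c_\alpha(A_p)>0$ and $\xi^p=\infty$, with no need for Theorem~\ref{bal-mass-th}), and that you treat Case~II directly via~(\ref{GCII}) where the paper observes that (\ref{acirc}) holds automatically in Case~II and reduces to the Case~I construction.
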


\begin{proof}Assume first that (\ref{acirc}) holds. Then for every $j\in I^+$ we have $\xi^j(A_j^\circ)>a_j$, and by the universal measurability of $A_j^\circ$ there is  a compact set $K_j\subset A_j^\circ$ such that $\xi^j(K_j)>a_j$ and $|f_j|\leqslant M_j<\infty$ on $K_j$ for some constant $M_j$ (see (\ref{Acirc})). Define $\boldsymbol\mu:=(\mu^i)_{i\in I}$, where $\mu^j:=a_j\xi^j|_{K_j}/\xi^j(K_j)$ for all $j\in I^+$ and $\mu^p$ is any measure from $\mathcal E^+_\alpha(A_p,a_p;\mathbb R^n)$ (such $\mu^p$ exists since $c_\alpha(A_p)>0$). Noting that $\xi^j|_{K_j}\in\mathcal E_\alpha^+(K_j;\mathbb R^n)$ for all $j\in I^+$ by (\ref{constr1}), we get $\boldsymbol\mu\in\mathcal E_{\alpha,\mathbf{f}}^{\boldsymbol\xi}(\mathbf A,\mathbf a;\mathbb R^n)$ which yields (\ref{cggen}).
To complete the proof, it is left to observe that (\ref{acirc}) holds automatically if Case~II takes place, because then $\kappa_\alpha(\cdot,\zeta-\zeta')$ is finite n.e.\ on $\mathbb R^n$, hence $\xi^j$-a.e.\ for all $j\in I^+$ by \cite[Lemma~2.3.1]{F1}.\end{proof}

The theory developed in the present study includes sufficient and/or necessary conditions for the existence of solutions $\boldsymbol\lambda_{\mathbf A}^{\boldsymbol\xi}=(\lambda_{\mathbf A}^i)_{i\in I}$ to Problem~\ref{pr2gen} with $\mathbf A$, $\mathbf a$, $\mathbf f$ and $\boldsymbol\xi$ chosen above (see Theorems~\ref{th-suff} and~\ref{th-unsuff}). We also provide descriptions of the $\mathbf f$-weighted $\alpha$-Riesz vector potentials of the solutions $\boldsymbol\lambda_{\mathbf A}^{\boldsymbol\xi}$, single out their characteristic properties, and analyze the supports of the $\lambda^i_{\mathbf A}$, $i\in I$ (see Theorems~\ref{desc-pot}, \ref{desc-sup} and~\ref{zone}). These results are illustrated in Examples~\ref{ex} and~\ref{ex2}. See also Section~\ref{sec:ext} for an extension of the theory to the case where $\xi^p\ne\infty$. The proofs of Theorems~\ref{th-suff}--\ref{zone} are given in Sections~\ref{sec-pr1} and~\ref{sec-pr2}; they are substantially based on Theorem~\ref{th-rel} which is a subject of the next section.

\section{Relations between minimum $\alpha$-Riesz and $\alpha$-Green energy problems}\label{deep}

Throughout this section, $\mathbf A$, $\mathbf a$, $\mathbf f$ and $\boldsymbol\xi$ are as indicated at the beginning of Section~\ref{sec-form}, except for (\ref{cggen}) which is temporarily not required. The aim of Theorem~\ref{th-rel} below is to establish a relationship between, on the one hand, the solvability (or the non-solvability) of Problem~\ref{pr2gen} for $\mathbb R^n$, $\kappa_\alpha$, $\mathbf A$, $\mathbf a$, $\mathbf f$, $\boldsymbol\xi$ and, on the other hand, that for $D$, $g=g^\alpha_D$, $\mathbf A^+$, $\mathbf a^+$, $\mathbf f^+$ and $\boldsymbol\xi^+$, where
\[\mathbf A^+:=(A_j)_{j\in I^+}, \ \mathbf a^+:=(a_j)_{j\in I^+}, \ \mathbf f^+:=(f_j|_D)_{j\in I^+}, \ \boldsymbol\xi^+:=(\xi^j)_{j\in I^+}.\]
(Note that $\mathbf A^+$ is a standard condenser in $X=D$ consisting of only positive plates.) Observe that since for every given $j\in I^+$ we have
\begin{equation}\label{inclusion}\mathfrak M^+(A_j;\mathbb R^n)\subset\mathfrak M^+(A_j;D),\end{equation}
the measure $\xi^j$ can certainly be thought of as an element of $\mathfrak C(A_j;D)$.

For any $\boldsymbol\mu=(\mu^i)_{i\in I}\in\mathfrak M^+(\mathbf A;\mathbb R^n)$ write $\boldsymbol\mu^+:=(\mu^j)_{j\in I^+}$; then $\boldsymbol\mu^+$ belongs to $\mathfrak M^+(\mathbf A^+;D)$ by (\ref{inclusion}). If moreover $\kappa_\alpha(\boldsymbol\mu,\boldsymbol\mu)<\infty$, then $\boldsymbol\mu^+$ belongs to $\mathcal E^+_\alpha(\mathbf A^+;\mathbb R^n)$, as well as to $\mathcal E_g^+(\mathbf A^+;D)$, the latter being clear from~(\ref{l3-1}).

\begin{theorem}\label{th-rel}Under the just mentioned assumptions on\/ $\mathbf A$, $\mathbf a$, $\mathbf f$ and\/ $\boldsymbol\xi$,
\begin{equation}\label{equality}G^{\boldsymbol\xi}_{\alpha,\mathbf f}(\mathbf A,\mathbf a;\mathbb R^n)=G^{\boldsymbol\xi^+}_{g,\mathbf f^+}(\mathbf A^+,\mathbf a^+;D).\end{equation}
If moreover these\/ {\rm(}equal\/{\rm)} extremal values are finite, then\/ $\mathfrak S^{\boldsymbol{\xi}}_{\alpha,\mathbf f}(\mathbf A,\mathbf a;\mathbb R^n)$ is nonempty if and only if so is\/ $\mathfrak S^{\boldsymbol{\xi^+}}_{g,\mathbf f^+}(\mathbf A^+,\mathbf a^+;D)$, and in the affirmative case
the following two assertions are equivalent for any\/ $\boldsymbol\lambda_{\mathbf A}=(\lambda_{\mathbf A}^i)_{i\in I}\in\mathfrak M^+(\mathbf A;\mathbb R^n)${\rm:}
\begin{itemize}
\item[{\rm(i)}] $\boldsymbol\lambda_{\mathbf A}\in\mathfrak S^{\boldsymbol{\xi}}_{\alpha,\mathbf f}(\mathbf A,\mathbf a;\mathbb R^n)$.
\item[{\rm(ii)}] $\boldsymbol\lambda_{\mathbf A}^+=(\lambda_{\mathbf A}^j)_{j\in I^+}\in\mathfrak S^{\boldsymbol{\xi^+}}_{g,\mathbf f^+}(\mathbf A^+,\mathbf a^+;D)$ and, in addition,
\begin{equation}\label{reprrr}
\lambda_{\mathbf A}^p=\Bigl(\sum_{j\in I^+}\,\lambda_{\mathbf A}^j\Bigr)'.
\end{equation}
\end{itemize}
\end{theorem}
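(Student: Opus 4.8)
The plan is to exploit the correspondence $\boldsymbol\mu\leftrightarrow(\boldsymbol\mu^+,\text{balayage})$ together with the identity $\|\mu\|_g^2=\|\mu-\mu'\|_\alpha^2$ from Lemma~\ref{l-hen'}, to translate the $\alpha$-Riesz problem on $\mathbb R^n$ into the $\alpha$-Green problem on $D$. First I would observe that given any $\boldsymbol\mu\in\mathcal E_{\alpha,\mathbf f}^{\boldsymbol\xi}(\mathbf A,\mathbf a;\mathbb R^n)$, the ``extremal'' choice of negative component is $\mu^p=\bigl(\sum_{j\in I^+}\mu^j\bigr)'$: indeed, by (\ref{defR}) we have $R\boldsymbol\mu=\sum_{j\in I^+}\mu^j-\mu^p$, and writing $\nu:=\sum_{j\in I^+}\mu^j$ (an extendible measure in $\mathcal E_\alpha^+(D;\mathbb R^n)$ whose total mass is $a_p$ by (\ref{ap})), the constraint $\xi^p=\infty$ means the only restriction on $\mu^p$ is $\mu^p\in\mathcal E_\alpha^+(A_p,a_p;\mathbb R^n)$. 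Since $A_p=D^c$ is not $\alpha$-thin at infinity, Theorem~\ref{bal-mass-th} guarantees $\nu'(\mathbb R^n)=\nu(\mathbb R^n)=a_p$, so $\nu'$ is an admissible choice; and by the projection property (\ref{proj}) (equivalently by $\kappa_\alpha(\nu-\nu',\theta)=0$ for $\alpha$-absolutely continuous $\theta$ on $D^c$, cf.\ (\ref{l2})), replacing $\mu^p$ by $\nu'$ strictly decreases $\|R\boldsymbol\mu\|_\alpha^2$ unless $\mu^p=\nu'$ already, while leaving $\langle\mathbf f,\boldsymbol\mu\rangle=\langle\mathbf f^+,\boldsymbol\mu^+\rangle$ unchanged by Lemma~\ref{conclusion}. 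Hence in both Case~I and Case~II we have $G_{\alpha,\mathbf f}(\boldsymbol\mu)\geqslant G_{\alpha,\mathbf f}(\boldsymbol\mu^+,\nu')$.

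Next I would compute $G_{\alpha,\mathbf f}(\boldsymbol\mu^+,\nu')$ explicitly in terms of Green quantities. With $\mu^p=\nu'$ we have $R\boldsymbol\mu=\nu-\nu'$, so by (\ref{enpos}) and Lemma~\ref{l-hen'}, $\kappa_\alpha(\boldsymbol\mu,\boldsymbol\mu)=\|\nu-\nu'\|_\alpha^2=\|\nu\|_g^2=g(\nu,\nu)=\sum_{j,k\in I^+}g(\mu^j,\mu^k)=g(\boldsymbol\mu^+,\boldsymbol\mu^+)$ (using (\ref{env}) with all signs $+1$). For the external-field term, in Case~I we have $\langle\mathbf f,\boldsymbol\mu\rangle=\langle\mathbf f^+,\boldsymbol\mu^+\rangle$ directly; in Case~II, $f_j=s_j\kappa_\alpha(\cdot,\zeta-\zeta')$ for $j\in I^+$ means $\langle\mathbf f^+,\boldsymbol\mu^+\rangle=\sum_{j\in I^+}\kappa_\alpha(\zeta-\zeta',\mu^j)=\kappa_\alpha(\zeta-\zeta',\nu)$, and one checks $\kappa_\alpha(\zeta-\zeta',\nu)=\kappa_\alpha(\zeta,\nu-\nu')=g(\zeta,\nu)$ by Lemma~\ref{l-hatg} and the balayage relation (\ref{bal-eq}) (since $\zeta-\zeta'$ has $\alpha$-potential zero n.e.\ on $D^c$ and $\nu'$ is carried by $D^c$), i.e.\ the external field on the Green side is $f_j^+=\kappa_\alpha(\cdot,\zeta-\zeta')|_D=g(\cdot,\zeta)$ (the admissible Green external field). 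Thus $G_{\alpha,\mathbf f}(\boldsymbol\mu^+,\nu')=G_{g,\mathbf f^+}(\boldsymbol\mu^+)$. This gives ``$\geqslant$'' in (\ref{equality}) and shows that any $\boldsymbol\mu$ competing on the Riesz side yields a competitor $\boldsymbol\mu^+$ on the Green side with no larger energy. Conversely, given $\boldsymbol\nu^+\in\mathcal E_{g,\mathbf f^+}^{\boldsymbol\xi^+}(\mathbf A^+,\mathbf a^+;D)$, by Lemma~\ref{l-hen'-comp}/Lemma~\ref{l-hen'} (applied after an exhaustion by compacts, or noting $\mathcal E_g^+$-membership forces $\alpha$-finite-energy extendibility via (\ref{l3-2})) the extension lies in $\mathcal E_\alpha^+(D;\mathbb R^n)$, so setting $\boldsymbol\mu:=(\boldsymbol\nu^+,(\sum_j\nu^j)')$ gives an element of $\mathcal E_{\alpha,\mathbf f}^{\boldsymbol\xi}(\mathbf A,\mathbf a;\mathbb R^n)$ — here one again uses Theorem~\ref{bal-mass-th} for the correct total mass — with $G_{\alpha,\mathbf f}(\boldsymbol\mu)=G_{g,\mathbf f^+}(\boldsymbol\nu^+)$, yielding ``$\leqslant$''.

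Finally, the equivalence (i)$\Leftrightarrow$(ii) in the case of finite extremal values follows from the two constructions just described: if $\boldsymbol\lambda_{\mathbf A}$ solves the Riesz problem then $G_{\alpha,\mathbf f}(\boldsymbol\lambda_{\mathbf A})\geqslant G_{\alpha,\mathbf f}(\boldsymbol\lambda_{\mathbf A}^+,\nu')\geqslant G_{g,\mathbf f^+}^{\boldsymbol\xi^+}(\mathbf A^+,\mathbf a^+;D)=G_{\alpha,\mathbf f}^{\boldsymbol\xi}(\mathbf A,\mathbf a;\mathbb R^n)=G_{\alpha,\mathbf f}(\boldsymbol\lambda_{\mathbf A})$, forcing equality throughout, hence $\boldsymbol\lambda_{\mathbf A}^p=\bigl(\sum_{j\in I^+}\lambda_{\mathbf A}^j\bigr)'$ by the strict decrease noted above and $\boldsymbol\lambda_{\mathbf A}^+\in\mathfrak S_{g,\mathbf f^+}^{\boldsymbol\xi^+}(\mathbf A^+,\mathbf a^+;D)$; conversely, if (ii) holds then $G_{\alpha,\mathbf f}(\boldsymbol\lambda_{\mathbf A})=G_{g,\mathbf f^+}(\boldsymbol\lambda_{\mathbf A}^+)=G_{g,\mathbf f^+}^{\boldsymbol\xi^+}(\mathbf A^+,\mathbf a^+;D)=G_{\alpha,\mathbf f}^{\boldsymbol\xi}(\mathbf A,\mathbf a;\mathbb R^n)$ by the energy identity, so $\boldsymbol\lambda_{\mathbf A}\in\mathfrak S_{\alpha,\mathbf f}^{\boldsymbol\xi}(\mathbf A,\mathbf a;\mathbb R^n)$. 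The nonemptiness equivalence is then immediate. The main obstacle I anticipate is bookkeeping around extendibility and finiteness of the mixed energy $\kappa_\alpha(\nu,\nu')$ when $\nu$ does not have compact support: Lemma~\ref{l-hen'} needs the extension to be in $\mathcal E_\alpha(\mathbb R^n)$, which for a general $\boldsymbol\nu^+\in\mathcal E_g^+(\mathbf A^+;D)$ must be extracted from (\ref{l3-2}) or from a compact-exhaustion together with a monotone/lower-semicontinuity argument; this is the step requiring the most care, the rest being formal manipulation of the already-established Riesz–Green dictionary.
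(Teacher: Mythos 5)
Your strategy coincides with the paper's own proof: both directions of (\ref{equality}) are obtained by the same two constructions (replace $\mu^p$ by the balayage $(R_{\mathbf A}\boldsymbol\mu^+)'$ using the projection property (\ref{proj}) and Theorem~\ref{bal-mass-th} for the mass count, then convert $\|R_{\mathbf A}\boldsymbol\mu^+-(R_{\mathbf A}\boldsymbol\mu^+)'\|_\alpha^2$ into $\|R_{\mathbf A^+}\boldsymbol\mu^+\|_g^2$ via Lemma~\ref{l-hen'}), and the equivalence (i)$\Leftrightarrow$(ii) is extracted exactly as you describe, with (\ref{reprrr}) forced by the strict inequality in (\ref{proj}). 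The handling of the external field in both cases is also the same as in the paper.

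The one step you flag as delicate is, however, resolved incorrectly in your sketch. When passing from a competitor $\boldsymbol\nu^+\in\mathcal E^{\boldsymbol\xi^+}_{g,\mathbf f^+}(\mathbf A^+,\mathbf a^+;D)$ to the Riesz side, you must show that the extension of each $\nu^j$ by $0$ off $D$ lies in $\mathcal E_\alpha^+(\mathbb R^n)$. Your first suggested route --- that ``$\mathcal E_g^+$-membership forces $\alpha$-finite-energy extendibility via (\ref{l3-2})'' --- is circular: (\ref{l3-2}) is a \emph{conclusion} of Lemma~\ref{l-hen'}, whose \emph{hypothesis} is precisely that the extension belongs to $\mathcal E_\alpha(\mathbb R^n)$; and the implication itself is false in general, since $g^\alpha_D\ll\kappa_\alpha$ near $\partial D$ and a measure on $D$ can have finite Green energy but infinite Riesz energy (Lemma~\ref{l-hen'-comp} requires compact support for exactly this reason). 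Your alternative exhaustion-by-compacts route also cannot produce finiteness of $\kappa_\alpha(\nu^j,\nu^j)$ out of finiteness of $g(\nu^j,\nu^j)$. The correct argument, used in the paper, is immediate from the standing hypothesis (\ref{constr1}): $\nu^j\leqslant\xi^j$ with $\xi^j\in\mathcal E_\alpha^+(A_j;\mathbb R^n)$, and since $\kappa_\alpha\geqslant0$ this gives $\kappa_\alpha(\nu^j,\nu^j)\leqslant\kappa_\alpha(\xi^j,\xi^j)<\infty$; boundedness of $\nu^j$ (total mass $a_j$) gives extendibility. With that substitution your proof is complete and agrees with the paper's.
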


\begin{proof} We begin by establishing the inequality
\begin{equation}\label{in1}G^{\boldsymbol\xi^+}_{g,\mathbf f^+}(\mathbf A^+,\mathbf a^+;D)\geqslant G^{\boldsymbol\xi}_{\alpha,\mathbf f}(\mathbf A,\mathbf a;\mathbb R^n).\end{equation}
Assuming $G^{\boldsymbol\xi^+}_{g,\mathbf f^+}(\mathbf A^+,\mathbf a^+;D)<\infty$, choose $\boldsymbol\mu=(\mu^j)_{j\in I^+}\in\mathcal E^{\boldsymbol\xi^+}_{g,\mathbf f^+}(\mathbf A^+,\mathbf a^+;D)$. Then, according to (\ref{enpos}) and (\ref{wen'}) with $X=D$ and $\kappa=g$,
\[G_{g,\mathbf f^+}(\boldsymbol\mu)=g(\boldsymbol\mu,\boldsymbol\mu)+2\langle\mathbf f^+,\boldsymbol\mu\rangle=
\|R_{\mathbf A^+}\boldsymbol\mu\|^2_g+2\langle\mathbf f^+,\boldsymbol\mu\rangle.\]
Being bounded, each of the $\mu^j$, $j\in I^+$, is extendible (see Section~\ref{sec:RG}). Furthermore, the extension in question has finite $\alpha$-Riesz energy, for so does the extension of the constraint $\xi^j$ by (\ref{constr1}). Applying (\ref{eq1-2-hen}) to $R_{\mathbf A^+}\boldsymbol\mu\in\mathcal E^+_\alpha(A^+;\mathbb R^n)$ in place of $\mu$, we thus get
\[G_{g,\mathbf f^+}(\boldsymbol\mu)=\|R_{\mathbf A^+}\boldsymbol\mu-(R_{\mathbf A^+}\boldsymbol\mu)'\|^2_\alpha+2\langle\mathbf f^+,\boldsymbol\mu\rangle.\]
Since $A_p$ $({}=D^c)$ is not $\alpha$-thin at infinity, we conclude from (\ref{t-mass'}) and (\ref{ap}) that $\bigl(R_{\mathbf A^+}\boldsymbol\mu\bigr)'\in\mathcal E_\alpha^+(A_p,a_p;\mathbb R^n)$, and therefore $\tilde{\boldsymbol\mu}=(\tilde{\mu}^i)_{i\in I}\in\mathcal E^{\boldsymbol\xi}_\alpha(\mathbf A,\mathbf a;\mathbb R^n)$ where
\begin{equation}\label{111}\tilde{\boldsymbol\mu}^+=\boldsymbol\mu\text{ \ and \ }\tilde{\mu}^p=\bigl(R_{\mathbf A^+}\boldsymbol\mu\bigr)'=\Bigl(\sum_{j\in I^+}\,\mu^j\Bigr)'.\end{equation}
Here we have used the (permanent) assumption that $\xi^p=\infty$. Furthermore,
\begin{equation*}\label{2}\langle\mathbf f,\tilde{\boldsymbol\mu}\rangle=\langle\mathbf f^+,\boldsymbol\mu\rangle<\infty,\end{equation*}
the equality being valid because $f^p=0$ n.e.\ on $A_p$ (see Section~\ref{sec-def}), hence $\tilde{\mu}^p$-a.e.\ by \cite[Lemma~2.3.1]{F1}, and also because $D^c$ is $\mu^j$-negligible for every $j\in I^+$. Thus
$\tilde{\boldsymbol\mu}\in\mathcal E^{\boldsymbol\xi}_{\alpha,\mathbf f}(\mathbf A,\mathbf a;\mathbb R^n)$ and $G_{\alpha,\mathbf f}(\tilde{\boldsymbol\mu})=G_{g,\mathbf f^+}(\boldsymbol\mu)$, where the latter relation follows from the preceding three displays. This yields
\[G_{g,\mathbf f^+}(\boldsymbol\mu)=G_{\alpha,\mathbf f}(\tilde{\boldsymbol\mu})\geqslant G^{\boldsymbol\xi}_{\alpha,\mathbf f}(\mathbf A,\mathbf a;\mathbb R^n),\]
which in view of the arbitrary choice of $\boldsymbol\mu\in\mathcal E^{\boldsymbol\xi^+}_{g,\mathbf f^+}(\mathbf A^+,\mathbf a^+;D)$ es\-tab\-lishes~(\ref{in1}).

On the other hand, in view of (\ref{l3-1}) and (\ref{plus}) for any $\boldsymbol\nu\in\mathcal E^{\boldsymbol\xi}_{\alpha,\mathbf f}(\mathbf A,\mathbf a;\mathbb R^n)$ we have $\boldsymbol\nu^+\in\mathcal E^{\boldsymbol\xi^+}_{g,\mathbf f^+}(\mathbf A^+,\mathbf a^+;D)$.
Thus, by (\ref{enpos}), (\ref{proj}), (\ref{eq1-2-hen}) and (\ref{plus}),
\begin{align}\notag G_{\alpha,\mathbf f}(\boldsymbol\nu)&=\kappa_\alpha(\boldsymbol\nu,\boldsymbol\nu)+2\langle\mathbf f^+,\boldsymbol\nu^+\rangle=
\|R_{\mathbf A}\boldsymbol\nu^+-\nu^p\|^2_\alpha+2\langle\mathbf f^+,\boldsymbol\nu^+\rangle\\{}&\geqslant
\|R_{\mathbf A}\boldsymbol\nu^+-(R_{\mathbf A}\boldsymbol\nu^+)'\|^2_\alpha+2\langle\mathbf f^+,\boldsymbol\nu^+\rangle=
\|R_{\mathbf A}\boldsymbol\nu^+\|^2_g+2\langle\mathbf f^+,\boldsymbol\nu^+\rangle\notag\\
{}&=g(\boldsymbol\nu^+,\boldsymbol\nu^+)+2\langle\mathbf f^+,\boldsymbol\nu^+\rangle=G_{g,\mathbf f^+}(\boldsymbol\nu^+)\geqslant G^{\boldsymbol\xi^+}_{g,\mathbf f^+}(\mathbf A^+,\mathbf a^+;D).\label{chain}
\end{align}
Since $\boldsymbol\nu\in\mathcal E^{\boldsymbol\xi}_{\alpha,\mathbf f}(\mathbf A,\mathbf a;\mathbb R^n)$ has been chosen arbitrarily, this together with (\ref{in1}) proves (\ref{equality}).

Now suppose that there exists $\boldsymbol\mu=(\mu^j)_{j\in I^+}\in\mathfrak S^{\boldsymbol{\xi^+}}_{g,\mathbf f^+}(\mathbf A^+,\mathbf a^+;D)$. Define $\tilde{\boldsymbol\mu}=(\tilde{\mu}^i)_{i\in I}$ as in (\ref{111}). Then the same arguments as those applied in the first paragraph of this proof enable us to see that $\tilde{\boldsymbol\mu}\in\mathcal E^{\boldsymbol\xi}_{\alpha,\mathbf f}(\mathbf A,\mathbf a;\mathbb R^n)$ and also that $G_{\alpha,\mathbf f}(\tilde{\boldsymbol\mu})=G_{g,\mathbf f^+}(\boldsymbol\mu)$. The latter yields
\[G_{\alpha,\mathbf f}(\tilde{\boldsymbol\mu})=G^{\boldsymbol\xi^+}_{g,\mathbf f^+}(\mathbf A^+,\mathbf a^+;D).\]
Substituting (\ref{equality}) into the last display shows that, actually, $\tilde{\boldsymbol\mu}\in\mathfrak S^{\boldsymbol{\xi}}_{\alpha,\mathbf f}(\mathbf A,\mathbf a;\mathbb R^n)$, which in view of the latter relation in (\ref{111}) proves that, indeed, (ii) implies~(i).

To establish the converse implication, assume that there is $\boldsymbol\nu=(\nu^i)_{i\in I}\in\mathfrak S^{\boldsymbol{\xi}}_{\alpha,\mathbf f}(\mathbf A,\mathbf a;\mathbb R^n)$. Then $\boldsymbol\nu^+\in\mathcal E^{\boldsymbol\xi^+}_{g,\mathbf f^+}(\mathbf A^+,\mathbf a^+;D)$ (see the second paragraph of the proof) and, in addition, (\ref{chain}) holds. Since for this $\boldsymbol\nu$ the first term in (\ref{chain}) equals $G^{\boldsymbol\xi}_{\alpha,\mathbf f}(\mathbf A,\mathbf a;\mathbb R^n)$, we conclude from (\ref{equality}) that all the inequalities in (\ref{chain}) are in fact equalities. This implies that $\boldsymbol\nu^+\in\mathfrak S^{\boldsymbol{\xi^+}}_{g,\mathbf f^+}(\mathbf A^+,\mathbf a^+;D)$ and also that $\nu^p=(R\boldsymbol\nu^+)'$, the latter being clear from (\ref{proj}).\end{proof}

\section{Main results}\label{s:main}
Throughout Section~\ref{s:main} we keep all the assumptions on $\mathbf A$, $\mathbf a$, $\mathbf f$ and $\boldsymbol\xi$ imposed at the beginning of Section~\ref{sec-form}, except for (\ref{cggen}).\footnote{Under the hypotheses of any of Theorems~\ref{th-unsuff}--\ref{zone}, (\ref{cggen}) holds in consequence of Lemma~\ref{suff-fin}.\label{f:aut}}

\subsection{Formulations of the main results}\label{form:main}

\begin{theorem}\label{th-suff} Assume now that\/ {\rm(\ref{cggen})} is fulfilled and, moreover,
\begin{equation}\label{boundd}\xi^j(A_j)<\infty\text{ \ for all \ }j\in I^+.\end{equation}
Then the class\/ $\mathfrak S^{\boldsymbol{\xi}}_{\alpha,\mathbf f}(\mathbf A,\mathbf a;\mathbb R^n)$ of the solutions to Problem\/~{\rm\ref{pr2gen}} is nonempty, and for any one of its elements\/ $\boldsymbol\lambda_{\mathbf A}^{\boldsymbol\xi}=(\lambda_{\mathbf A}^i)_{i\in I}$ we have\/
$\lambda_{\mathbf A}^p=\bigl(\sum_{j\in I^+}\,\lambda_{\mathbf A}^j\bigr)'$.\end{theorem}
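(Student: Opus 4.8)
The plan is to transfer the whole statement to the \emph{constrained $\alpha$-Green energy problem} on $D$ by means of Theorem~\ref{th-rel}, and then to solve that problem by the classical interplay of the vague and the strong topologies, exploiting the perfectness of $g=g^\alpha_D$. Since (\ref{cggen}) is assumed and $G^{\boldsymbol\xi}_{\alpha,\mathbf f}(\mathbf A,\mathbf a;\mathbb R^n)\geqslant G_{\alpha,\mathbf f}(\mathbf A,\mathbf a;\mathbb R^n)>-\infty$ by~(\ref{est-bel}), the two equal extremal values in (\ref{equality}) are finite; hence by Theorem~\ref{th-rel} it suffices to prove that $\mathfrak S^{\boldsymbol\xi^+}_{g,\mathbf f^+}(\mathbf A^+,\mathbf a^+;D)\ne\varnothing$. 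Granting this, the implication (ii)$\Rightarrow$(i) of Theorem~\ref{th-rel} yields an element of $\mathfrak S^{\boldsymbol\xi}_{\alpha,\mathbf f}(\mathbf A,\mathbf a;\mathbb R^n)$, while (i)$\Rightarrow$(ii) shows that every solution $\boldsymbol\lambda_{\mathbf A}^{\boldsymbol\xi}=(\lambda_{\mathbf A}^i)_{i\in I}$ satisfies $\lambda_{\mathbf A}^p=\bigl(\sum_{j\in I^+}\lambda_{\mathbf A}^j\bigr)'$, which is exactly the assertion of the theorem. Recall that $\mathbf A^+$ is a standard condenser in $X=D$ consisting of positive plates only, so the framework of Sections~\ref{sec:vague}, \ref{sec:constr} and~\ref{sec:RG} applies with $\kappa=g$; moreover $c_g(A_j)>0$ and $\xi^j\in\mathcal E^+_g(A_j;D)$ for all $j\in I^+$, the latter by~(\ref{l3-1}) since $\xi^j$ is bounded by~(\ref{boundd}) and of finite $\alpha$-Riesz energy by~(\ref{constr1}).

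Fix a minimizing sequence $\boldsymbol\mu_k=(\mu_k^j)_{j\in I^+}\in\mathcal E^{\boldsymbol\xi^+}_{g,\mathbf f^+}(\mathbf A^+,\mathbf a^+;D)$, which exists because $G^{\boldsymbol\xi^+}_{g,\mathbf f^+}(\mathbf A^+,\mathbf a^+;D)<\infty$. Exactly as in the proof of Lemma~\ref{lemma:unique:}, the convexity of this class together with the parallelogram identity in $\mathcal E_g(D)$ applied to $R_{\mathbf A^+}\boldsymbol\mu_k$ and $R_{\mathbf A^+}\boldsymbol\mu_l$ and the inequality $G_{g,\mathbf f^+}\bigl((\boldsymbol\mu_k+\boldsymbol\mu_l)/2\bigr)\geqslant G^{\boldsymbol\xi^+}_{g,\mathbf f^+}(\mathbf A^+,\mathbf a^+;D)$ yields
\[\|R_{\mathbf A^+}\boldsymbol\mu_k-R_{\mathbf A^+}\boldsymbol\mu_l\|_g^2\leqslant 2G_{g,\mathbf f^+}(\boldsymbol\mu_k)+2G_{g,\mathbf f^+}(\boldsymbol\mu_l)-4G^{\boldsymbol\xi^+}_{g,\mathbf f^+}(\mathbf A^+,\mathbf a^+;D)\longrightarrow0.\]
Thus $\{R_{\mathbf A^+}\boldsymbol\mu_k\}$ is strong Cauchy in $\mathcal E^+_g(D)$ (all plates being positive, $R_{\mathbf A^+}\boldsymbol\mu_k\geqslant0$), and since $g$ is perfect (Remark~\ref{rem:clas}), Theorem~\ref{fu-complete} provides $\nu_0\in\mathcal E^+_g(D)$ with $R_{\mathbf A^+}\boldsymbol\mu_k\to\nu_0$ strongly, hence also vaguely.

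Next one lifts $\nu_0$ to a vector minimizer. Because $\mu_k^j\leqslant\xi^j$ with $\xi^j(A_j)<\infty$ by~(\ref{boundd}), each sequence $\{\mu_k^j\}_k$ is vaguely bounded, so by Lemma~\ref{l:vague:c} and first countability of the vague topology we may pass to a subsequence along which $\mu_k^j\to\lambda^j$ vaguely for every $j\in I^+$. Then $\boldsymbol\lambda:=(\lambda^j)_{j\in I^+}\in\mathfrak M^+(\mathbf A^+;D)$ ($\mathfrak M^+(\mathbf A^+;D)$ is vaguely closed, as $\mathbf A^+$ is standard), $\lambda^j\leqslant\xi^j$ (the cone $\mathfrak M^+(D)$ being vaguely closed), and $R_{\mathbf A^+}\boldsymbol\lambda=\sum_{j\in I^+}\lambda^j=\nu_0$ by vague continuity of the resultant map. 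For the normalization, Lemma~\ref{lemma-semi} gives vague lower semicontinuity of the total mass on $\mathfrak M^+(D)$, whence $\lambda^j(A_j)\leqslant\liminf_k\mu_k^j(A_j)=a_j$; applying the same to $\xi^j-\mu_k^j\to\xi^j-\lambda^j$ and using $\xi^j(A_j)<\infty$ gives $\lambda^j(A_j)\geqslant a_j$, so $\lambda^j(A_j)=a_j$. Finally $0\leqslant g(\lambda^j,\lambda^j)\leqslant g(\xi^j,\xi^j)<\infty$ since $\lambda^j\leqslant\xi^j\in\mathcal E^+_g(A_j;D)$ and $g\geqslant0$; hence $\boldsymbol\lambda\in\mathcal E^{\boldsymbol\xi^+}_g(\mathbf A^+,\mathbf a^+;D)$.

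It remains to verify $G_{g,\mathbf f^+}(\boldsymbol\lambda)=G^{\boldsymbol\xi^+}_{g,\mathbf f^+}(\mathbf A^+,\mathbf a^+;D)$. The energy term converges, $g(\boldsymbol\mu_k,\boldsymbol\mu_k)=\|R_{\mathbf A^+}\boldsymbol\mu_k\|_g^2\to\|\nu_0\|_g^2=g(\boldsymbol\lambda,\boldsymbol\lambda)$, by the strong convergence $R_{\mathbf A^+}\boldsymbol\mu_k\to\nu_0$. In Case~I each $f_j|_D\in\Psi(D)$, so $\langle\mathbf f^+,\boldsymbol\lambda^+\rangle\leqslant\liminf_k\langle\mathbf f^+,\boldsymbol\mu_k^+\rangle$ by Lemma~\ref{lemma-semi}, which combined with the preceding line gives $G_{g,\mathbf f^+}(\boldsymbol\lambda)\leqslant\liminf_k G_{g,\mathbf f^+}(\boldsymbol\mu_k)=G^{\boldsymbol\xi^+}_{g,\mathbf f^+}(\mathbf A^+,\mathbf a^+;D)$, the reverse inequality being immediate. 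In Case~II, by Lemma~\ref{l-hatg} one has $f_j|_D=g(\cdot,\zeta)$ n.e.\ on $D$, and since each $\mu_k^j$ is of finite $g$-energy it does not charge the exceptional set; hence $\langle\mathbf f^+,\boldsymbol\mu_k^+\rangle=g(\zeta,R_{\mathbf A^+}\boldsymbol\mu_k)$ and likewise $\langle\mathbf f^+,\boldsymbol\lambda^+\rangle=g(\zeta,\nu_0)$, and as $\zeta\in\mathcal E_g(D)$ the continuity of the inner product yields $\langle\mathbf f^+,\boldsymbol\mu_k^+\rangle\to\langle\mathbf f^+,\boldsymbol\lambda^+\rangle$, so $G_{g,\mathbf f^+}(\boldsymbol\mu_k)\to G_{g,\mathbf f^+}(\boldsymbol\lambda)$ and $\boldsymbol\lambda$ is a minimizer. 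The main obstacle I anticipate is the bookkeeping in the lifting step --- in particular securing the exact normalization $\lambda^j(A_j)=a_j$ (this is where hypothesis~(\ref{boundd}) is used essentially, via lower semicontinuity of the mass applied to $\xi^j-\mu_k^j$) and observing that it is the \emph{standardness} of $\mathbf A^+$ in $D$ that keeps $\boldsymbol\lambda$ concentrated on the plates; the rest is the routine perfect-kernel argument together with the identification $\langle\mathbf f^+,\cdot^+\rangle=g(\zeta,R_{\mathbf A^+}\,\cdot)$ in Case~II.
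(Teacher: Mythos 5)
Your proposal is correct and follows essentially the same route as the paper: reduction to the constrained $\alpha$-Green problem on $D$ via Theorem~\ref{th-rel}, then a minimizing sequence made strong Cauchy by the parallelogram identity, perfectness of $g^\alpha_D$, and vague compactness to produce the minimizer. The only (harmless) deviations are cosmetic: you secure the normalization $\lambda^j(A_j)=a_j$ by applying mass lower semicontinuity to $\xi^j-\mu_k^j$ instead of the paper's compact exhaustion argument (both hinge on~(\ref{boundd})), and you get finiteness of $g(\lambda^j,\lambda^j)$ directly from $\lambda^j\leqslant\xi^j$ rather than from vague lower semicontinuity of the energy.
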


Theorem~\ref{th-suff} is sharp in the sense that it no longer holds if requirement (\ref{boundd}) is omitted from its hypotheses (see the following Theorem~\ref{th-unsuff}).

\begin{theorem}\label{th-unsuff}Condition\/ {\rm(\ref{boundd})} is in general also necessary for the solvability of Problem\/~{\rm\ref{pr2gen}}. More precisely, suppose that\/ $I^+=\{1\}$, $c_\alpha(A_1)=\infty$ and that Case\/~{\rm II} holds with\/ $\zeta\geqslant0$. Then there is a constraint\/ $\xi^1\in\mathfrak C(A_1;\mathbb R^n)\cap\mathcal E_\alpha^+(A_1;\mathbb R^n)$ with\/ $\xi^1(A_1)=\infty$ such that
\begin{equation}\label{Gzero}G_{\alpha,\mathbf f}^{\boldsymbol\xi}(\mathbf A,\mathbf a;\mathbb R^n)=G_{g,f_1|_D}^{\xi^1}(A_1,a_1;D)=0;\end{equation}
hence\/ $G_{\alpha,\mathbf f}^{\boldsymbol\xi}(\mathbf A,\mathbf a;\mathbb R^n)$ cannot be an actual minimum because\/ $\mathbf 0\notin\mathcal E_{\alpha,\mathbf f}^{\boldsymbol\xi}(\mathbf A,\mathbf a;\mathbb R^n)$.
\end{theorem}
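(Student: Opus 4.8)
The plan is to reduce the whole statement to a scalar $\alpha$-Green energy computation via Theorem~\ref{th-rel}, and then to exhibit a single unbounded constraint $\xi^1$ that forces the constrained $\alpha$-Green capacity of $A_1$ to be infinite, so that the infimum in the $\alpha$-Green problem (and hence, by~(\ref{equality}), in the $\alpha$-Riesz problem) is zero but cannot be attained. Since $I^+=\{1\}$, Theorem~\ref{th-rel} gives $G^{\boldsymbol\xi}_{\alpha,\mathbf f}(\mathbf A,\mathbf a;\mathbb R^n)=G^{\xi^1}_{g,f_1|_D}(A_1,a_1;D)$ (here $\mathbf A^+=(A_1)$, $\mathbf a^+=(a_1)$), so the first equality in~(\ref{Gzero}) is automatic once $\xi^1$ is fixed; and it likewise reduces the non-attainment of the $\alpha$-Riesz problem to non-attainment of the $\alpha$-Green problem, because a minimizer $\boldsymbol\lambda_{\mathbf A}$ for the former would yield $\lambda^1_{\mathbf A}\in\mathfrak S^{\xi^1}_{g,f_1|_D}(A_1,a_1;D)$ by the equivalence of (i) and (ii). So the real work is entirely on the $\alpha$-Green side, mirroring the proof of Theorem~\ref{pr1uns}.

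Next I would construct $\xi^1$. Fix an increasing sequence of compact sets $K_\ell\subset A_1$ with $K_\ell\uparrow A_1$ and $c_g(K_\ell)>0$; since $c_g(A_1)$ is infinite whenever $c_\alpha(A_1)=\infty$ (this is where the hypothesis $c_\alpha(A_1)=\infty$ enters, together with $g^\alpha_D\le\kappa_\alpha$ from~(\ref{g-ineq}), which gives $c_g(A_1)\ge$ something comparable — more precisely one uses that a set of infinite $\alpha$-Riesz capacity has infinite $\alpha$-Green capacity, a fact that should be recalled or cited), we have $c_g(K_\ell)\uparrow\infty$. Let $\lambda_\ell$ denote the $g$-capacitary measure on $K_\ell$, so $\lambda_\ell\in\mathcal E^+_g(K_\ell,1;D)$ with $\|\lambda_\ell\|^2_g=1/c_g(K_\ell)\to0$. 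The constraint is then built as a suitably weighted countable sum $\xi^1:=\sum_\ell c_\ell\,\lambda_\ell$ with positive coefficients $c_\ell$ chosen so that: (a) each partial rescaling $a_1\lambda_\ell$ is dominated by $\xi^1$ (which needs $c_\ell\ge a_1$, or rather that $\xi^1\ge a_1\lambda_\ell$ on the relevant set); (b) $\xi^1\in\mathcal E^+_\alpha(A_1;\mathbb R^n)$, i.e.\ the extension has finite $\alpha$-Riesz energy — this is arranged by making the $c_\ell$ decay fast enough, using the triangle inequality in $\mathcal E_\alpha(\mathbb R^n)$ together with $\|\lambda_\ell\|_\alpha<\infty$ (which holds by Lemma~\ref{l-hen'-comp} since $\lambda_\ell$ has compact support and finite $g$-energy); (c) $\xi^1(A_1)=\infty$, which simply requires $\sum_\ell c_\ell=\infty$ — compatible with (b) because the energy norms $\|\lambda_\ell\|_\alpha$ can be controlled but the masses are all $1$, so one plays mass against energy. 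One must also check $\xi^1\in\mathfrak C(A_1;\mathbb R^n)$, i.e.\ $\xi^1(A_1)>a_1$, which is immediate from $\xi^1(A_1)=\infty$.

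With $\xi^1$ in hand, the lower bound $G^{\xi^1}_{g,f_1|_D}(A_1,a_1;D)\ge0$ is a consequence of~(\ref{enpos}) applied to the one-component Green condenser together with the fact that $f_1=f_1\ge0$ contributes nonnegatively — but here Case~II with $\zeta\ge0$ is invoked: $f_1=\kappa_\alpha(\cdot,\zeta-\zeta')=g(\cdot,\zeta)\ge0$ on $D$ by Lemma~\ref{l-hatg}, so $\langle f_1|_D,\mu^1\rangle\ge0$ for every admissible $\mu^1$, hence $G_{g,f_1|_D}(\mu^1)\ge g(\mu^1,\mu^1)\ge0$. For the matching upper bound, take $\mu_\ell:=a_1\lambda_\ell/\lambda_\ell(K_\ell)=a_1\lambda_\ell$ (since $\lambda_\ell(K_\ell)=1$); by construction $\mu_\ell\le\xi^1$, $\mu_\ell\in\mathcal E^{\xi^1}_g(A_1,a_1;D)$, and $G_{g,f_1|_D}(\mu_\ell)=a_1^2\|\lambda_\ell\|^2_g+2a_1\langle f_1|_D,\lambda_\ell\rangle$; one must therefore also arrange, in the construction of $\xi^1$, that $\langle f_1|_D,\lambda_\ell\rangle\to0$ along a subsequence — this can be secured because $g(\cdot,\zeta)$ is finite $c_g$-n.e.\ and the $\lambda_\ell$ can be chosen so their supports escape any fixed set of positive measure, or alternatively by a further diagonal choice of coefficients. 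Letting $\ell\to\infty$ gives $G^{\xi^1}_{g,f_1|_D}(A_1,a_1;D)=0$, proving~(\ref{Gzero}) after transporting back through Theorem~\ref{th-rel}. Finally, non-attainment: if $\boldsymbol\lambda_{\mathbf A}\in\mathcal E^{\boldsymbol\xi}_{\alpha,\mathbf f}(\mathbf A,\mathbf a;\mathbb R^n)$ were a minimizer, then $\boldsymbol\lambda_{\mathbf A}\ne\mathbf 0$ forces $\lambda^1_{\mathbf A}\ne0$, so $g(\lambda^1_{\mathbf A},\lambda^1_{\mathbf A})>0$ by strict positive definiteness of $g$, whence $G_{g,f_1|_D}(\lambda^1_{\mathbf A})>0=G^{\xi^1}_{g,f_1|_D}(A_1,a_1;D)$, a contradiction; equivalently $\mathbf 0\notin\mathcal E^{\boldsymbol\xi}_{\alpha,\mathbf f}(\mathbf A,\mathbf a;\mathbb R^n)$ because $\mathbf 0$ violates the normalization $\mu^1(A_1)=a_1>0$. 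The main obstacle is the simultaneous bookkeeping in choosing the coefficients $c_\ell$: one needs $\sum c_\ell=\infty$ (so $\xi^1$ is unbounded) while keeping $\|\xi^1\|_\alpha<\infty$ (so $\xi^1\in\mathcal E^+_\alpha$) and ensuring the external-field contributions $\langle f_1,\lambda_\ell\rangle$ stay controlled; getting a single sequence $\lambda_\ell$ that is simultaneously $g$-capacitary, of vanishing $g$-energy, of summable-with-large-mass $\alpha$-energy, and asymptotically orthogonal to the fixed field $g(\cdot,\zeta)$ is the delicate part, and will likely require passing to a subsequence of the $K_\ell$ and a careful diagonalization.
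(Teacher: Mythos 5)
Your overall architecture is the paper's: reduce to the scalar $\alpha$-Green problem on $D$ via Theorem~\ref{th-rel}, build an unbounded constraint of finite energy as an infinite sum of mass-$a_1$ pieces of small energy, use those pieces as a minimizing sequence, and conclude non-attainment from strict positive definiteness. But the construction of $\xi^1$ has a genuine gap, and it is exactly the "delicate bookkeeping" you flag at the end without resolving. The constraint must lie in $\mathcal E_\alpha^+(A_1;\mathbb R^n)$ by (\ref{constr1}), i.e.\ have finite \emph{$\alpha$-Riesz} energy, whereas your building blocks $\lambda_\ell$ (the $g$-capacitary measures of an exhaustion $K_\ell\uparrow A_1$) only come with small \emph{$\alpha$-Green} energy. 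By (\ref{eq1-2-hen}), $\|\lambda_\ell\|_\alpha^2=\|\lambda_\ell\|_g^2+\|\lambda_\ell'\|_\alpha^2$, and nothing forces the balayage term to be small; so $\|\lambda_\ell\|_\alpha$ need not tend to $0$. Your three requirements then collide: (a) needs $c_\ell\geqslant a_1$ for infinitely many $\ell$ so that $a_1\lambda_\ell\leqslant\xi^1$, (c) needs $\sum c_\ell=\infty$, while for positive measures $\|\xi^1\|_\alpha^2\geqslant\sum_\ell c_\ell^2\|\lambda_\ell\|_\alpha^2$, so (b) forces $\|\lambda_\ell\|_\alpha\to0$ fast along the subsequence where $c_\ell\geqslant a_1$ --- which is not available. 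A further problem with an exhaustion is local finiteness: all the $\lambda_\ell$ may charge a fixed compact set with mass bounded below, so $\sum c_\ell\lambda_\ell$ need not even be a Radon measure.

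The missing idea is to get smallness of the $\alpha$-Riesz energy (not the Green energy) directly, and the hypothesis $c_\alpha(A_1)=\infty$ is designed for exactly that: by subadditivity of $c_\alpha$ on universally measurable sets, $c_\alpha(A_1\setminus B(0,r_\ell))=\infty$ for every $\ell$, so by (\ref{cap-def}) and (\ref{compact}) one can choose $\xi_\ell\in\mathcal E_\alpha^+(A_1\setminus B(0,r_\ell),a_1;\mathbb R^n)$ of compact support with $\|\xi_\ell\|_\alpha\leqslant\ell^{-2}$, the radii $r_\ell$ being chosen successively so the supports are disjoint and escape to infinity. Then $\xi^1:=\sum_\ell\xi_\ell$ is automatically Radon, has infinite total mass, satisfies $\|\xi^1\|_\alpha\leqslant\sum_\ell\ell^{-2}<\infty$, and dominates each test measure $\xi_\ell$ with no weights needed; by (\ref{g-ineq}) also $\|\xi_\ell\|_g\leqslant\ell^{-2}$. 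Finally, your handling of the field term (supports escaping a fixed set, or a diagonalization) is unnecessarily vague: since $f_1|_D=g(\cdot,\zeta)$ with $\zeta\in\mathcal E_g(D)$ by (\ref{IIgg}), the Cauchy--Schwarz inequality in $\mathcal E_g(D)$ gives $|g(\zeta,\xi_\ell)|\leqslant\|\zeta\|_g\|\xi_\ell\|_g\to0$, which together with $\zeta\geqslant0$ yields $0\leqslant G^{\xi^1}_{g,f_1|_D}(A_1,a_1;D)\leqslant\lim_\ell\bigl[\|\xi_\ell\|_g^2+2g(\zeta,\xi_\ell)\bigr]=0$. The remaining reductions in your write-up (the role of Theorem~\ref{th-rel} and the non-attainment argument) are correct.
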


The following three assertions provide descriptions of the $\mathbf f$-weighted $\alpha$-Riesz potentials $W^{\boldsymbol\lambda^{\boldsymbol\xi}_{\mathbf A}}_{\alpha,\mathbf f}$, cf.\ (\ref{wpot'}), of the solutions $\boldsymbol\lambda^{\boldsymbol\xi}_{\mathbf A}=(\lambda_{\mathbf A}^i)_{i\in I}\in\mathfrak S^{\boldsymbol{\xi}}_{\alpha,\mathbf f}(\mathbf A,\mathbf a;\mathbb R^n)$ (whenever these exist), single out their characteristic properties, and analyze the supports of the $\lambda_{\mathbf A}^i$, $i\in I$.

\begin{theorem}\label{desc-pot}Let\/ {\rm(\ref{acirc})} hold, and let each\/ $f_j$, $j\in I^+$, be lower bounded on\/ $A_j$. Fix\/ $\boldsymbol\lambda_{\mathbf A}\in\mathcal E^{\boldsymbol\xi}_{\alpha,\mathbf f}(\mathbf A,\mathbf a;\mathbb R^n)$ {\rm(}which exists according to footnote\/~{\rm\ref{f:aut}}{\rm)}. Then the following two assertions are equivalent:
\begin{itemize}
\item[{\rm(i)}] $\boldsymbol\lambda_{\mathbf A}\in\mathfrak S^{\boldsymbol{\xi}}_{\alpha,\mathbf f}(\mathbf A,\mathbf a;\mathbb R^n)$.
\item[{\rm(ii)}] There exists a vector\/ $(c_j)_{j\in I^+}\in\mathbb R^{p-1}$ such that for all\/ $j\in I^+$
\begin{align}\label{b1}W^{\boldsymbol\lambda_{\mathbf A},j}_{\alpha,\mathbf f}&\geqslant c_j\quad(\xi^j-\lambda_{\mathbf A}^j)\text{-a.e.},\\
\label{b2}W^{\boldsymbol\lambda_{\mathbf A},j}_{\alpha,\mathbf f}&\leqslant c_j\quad\lambda_{\mathbf A}^j\text{-a.e.},
\end{align}
and in addition we have
\begin{equation}\label{reprrr'}
W^{\boldsymbol\lambda_{\mathbf A},p}_{\alpha,\mathbf f}=0\text{ \ n.e. on \ }A_p.
\end{equation}
If moreover Case\/ {\rm II} holds, then relation\/ {\rm(\ref{reprrr'})} actually holds for every\/ $W^{\boldsymbol\lambda_{\mathbf A},i}_{\alpha,\mathbf f}$, $i\in I$, and it takes now the form
\begin{equation}\label{reprrr1'}
W^{\boldsymbol\lambda_{\mathbf A},i}_{\alpha,\mathbf f}=0\text{ \ on \ }A_p\setminus I_{\alpha,A_p}, \ i\in I,
\end{equation}
where\/ $I_{\alpha,A_p}$ denotes the set of all\/ $\alpha$-irregular\/ {\rm(}boundary\/{\rm)} points of\/ $A_p$.
\end{itemize}
\end{theorem}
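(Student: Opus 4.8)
The plan is to reduce Theorem~\ref{desc-pot} to the corresponding characterization for the constrained minimum $\alpha$-Green energy problem for $\mathbf A^+$ via Theorem~\ref{th-rel}, and then transfer that characterization from the Green kernel on $D$ to the $\alpha$-Riesz kernel on $\mathbb R^n$ using the balayage identities~(\ref{hatg}) and~(\ref{eq1-2-hen}) together with the relation $\lambda_{\mathbf A}^p=\bigl(\sum_{j\in I^+}\lambda_{\mathbf A}^j\bigr)'$ from Theorem~\ref{th-suff} (available under~(\ref{acirc}), cf.\ footnote~\ref{f:aut} and Lemma~\ref{suff-fin}). First I would observe that, by Theorem~\ref{th-rel}(i)$\Leftrightarrow$(ii), $\boldsymbol\lambda_{\mathbf A}$ solves Problem~\ref{pr2gen} for $\mathbb R^n,\kappa_\alpha$ if and only if $\boldsymbol\lambda_{\mathbf A}^+$ solves the analogous constrained Green-energy problem for $D,g$ and $\lambda_{\mathbf A}^p$ is the balayage~(\ref{reprrr}). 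So the content of~(i)$\Leftrightarrow$(ii) in Theorem~\ref{desc-pot} is (a) the variational characterization~(\ref{b1})--(\ref{b2}) of the Green solution, and (b) the assertion that~(\ref{reprrr}) is equivalent to the potential condition~(\ref{reprrr'}) and, in Case~II, to the stronger~(\ref{reprrr1'}).

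For part~(a): I would apply Lemma~\ref{aux43} to the convex class $\mathcal E^{\boldsymbol\xi^+}_{g,\mathbf f^+}(\mathbf A^+,\mathbf a^+;D)$, which gives that $\boldsymbol\lambda_{\mathbf A}^+$ is the Green minimizer iff $\sum_{j\in I^+}\langle W^{\boldsymbol\lambda_{\mathbf A}^+,j}_{g,\mathbf f^+},\mu^j-\lambda_{\mathbf A}^j\rangle\geqslant0$ for all competitors. The standard argument — vary one component at a time by moving mass between a subset of $A_j$ and a subset where $\xi^j-\lambda_{\mathbf A}^j$ lives, exploiting that the total mass $a_j$ is fixed — converts this into the existence of Lagrange multipliers $c_j$ with $W^{\boldsymbol\lambda_{\mathbf A}^+,j}_{g,\mathbf f^+}\geqslant c_j$ $(\xi^j-\lambda_{\mathbf A}^j)$-a.e.\ and $\leqslant c_j$ $\lambda_{\mathbf A}^j$-a.e.; lower boundedness of $f_j$ on $A_j$ and~(\ref{acirc}) are exactly what is needed to make these weighted potentials well defined and the perturbations admissible. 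Then I would use Lemma~\ref{l-hatg}, namely $g(\cdot,\nu)=\kappa_\alpha(\cdot,\nu-\nu')$ n.e.\ on $D$ with $\nu=\sum_{j\in I^+}\lambda_{\mathbf A}^j$, to rewrite $W^{\boldsymbol\lambda_{\mathbf A}^+,j}_{g,\mathbf f^+}=g(\cdot,\nu)+f_j$ as $\kappa_\alpha(\cdot,\nu-\lambda_{\mathbf A}^p)+f_j$ n.e.\ on $D$ (using~(\ref{reprrr})), which is precisely $W^{\boldsymbol\lambda_{\mathbf A},j}_{\alpha,\mathbf f}$ by~(\ref{repp}); the exceptional $c_\alpha$-null set is $\lambda_{\mathbf A}^j$- and $\xi^j$-negligible since those measures have finite energy, so~(\ref{b1})--(\ref{b2}) follow.

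For part~(b): the equivalence of~(\ref{reprrr}) with the potential statement~(\ref{reprrr'}) rests on the characterization of balayage quoted after~(\ref{bal-eq}): $\lambda_{\mathbf A}^p=\nu'$ iff $\kappa_\alpha(\cdot,\lambda_{\mathbf A}^p)=\kappa_\alpha(\cdot,\nu)$ n.e.\ on $D^c=A_p$ among $c_\alpha$-absolutely continuous measures on $A_p$, i.e.\ iff $\kappa_\alpha(\cdot,\nu-\lambda_{\mathbf A}^p)=0$ n.e.\ on $A_p$, which by~(\ref{repp}) with $i=p$ is exactly $W^{\boldsymbol\lambda_{\mathbf A},p}_{\alpha,\mathbf f}=0$ n.e.\ on $A_p$ once one notes $f_p=0$ n.e.\ on $A_p$ (Section~\ref{sec-def}); one must also check $\lambda_{\mathbf A}^p$ is $c_\alpha$-absolutely continuous, which holds because it has finite energy. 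For the Case~II refinement I would substitute $f_i=s_i\kappa_\alpha(\cdot,\zeta-\zeta')$, so $W^{\boldsymbol\lambda_{\mathbf A},i}_{\alpha,\mathbf f}=s_i\kappa_\alpha(\cdot,R\boldsymbol\lambda_{\mathbf A}+\zeta-\zeta')$ n.e.; since $R\boldsymbol\lambda_{\mathbf A}+\zeta-\zeta'$ has finite $\alpha$-Riesz energy, its potential is finite n.e., and the vanishing n.e.\ on $A_p$ propagates to vanishing at every $\alpha$-regular point of $A_p$ by lower semicontinuity / the characterization of irregular points (cf.\ \cite[Theorem~5.10]{L}), giving~(\ref{reprrr1'}) on $A_p\setminus I_{\alpha,A_p}$; that $W^{\boldsymbol\lambda_{\mathbf A},p}_{\alpha,\mathbf f}=0$ n.e.\ on $A_p$ also forces the same for $W^{\boldsymbol\lambda_{\mathbf A},j}_{\alpha,\mathbf f}$ on $A_p$ since these potentials differ only by the sign $s_i$ applied to the same function. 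I expect the main obstacle to be the careful one-component-at-a-time perturbation argument in part~(a): verifying that the test measures $\mu^j$ obtained by shifting mass stay in $\mathfrak M^{\xi^j}$, keep finite $g$-energy, and keep $\langle f_j,\mu^j\rangle$ finite — this is where conditions~(\ref{acirc}) and the lower boundedness of $f_j$ are genuinely used and where the measurability of the sets $\{W^{\boldsymbol\lambda_{\mathbf A},j}_{\alpha,\mathbf f}<c_j\}$ etc.\ must be handled — whereas the passage between the two kernels via Lemma~\ref{l-hatg} and the balayage characterization is essentially bookkeeping.
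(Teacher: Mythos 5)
Your proposal matches the paper's proof in both structure and substance: the paper likewise reduces (i)$\Leftrightarrow$(ii) to a Green-kernel characterization (its Theorem~\ref{desc-pot-g}) via Theorem~\ref{th-rel}, proves that characterization by the same Lagrange-multiplier/mass-transfer argument built on Lemma~\ref{aux43} (with (\ref{acirc}) and the lower boundedness of the $f_j$ used exactly where you indicate), identifies (\ref{reprrr'}) with (\ref{reprrr}) through the balayage identity (\ref{bal-eq}), and transfers the weighted potentials between the two kernels via Lemma~\ref{l-hatg} and (\ref{repp}) precisely as you describe. The only cosmetic difference is that for (\ref{reprrr1'}) the paper invokes \cite[Corollary~3.14]{FZ} for the vanishing of $\kappa_\alpha(\cdot,\mu-\mu')$ at $\alpha$-regular points of $A_p$, where you appeal directly to the regular-point characterization.
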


\begin{remark}{\rm The lower boundedness of $f_j$, $j\in I^+$, assumed in Theorem~\ref{desc-pot}, holds automatically provided that Case~I takes place. Furthermore, in Case~I relation (\ref{b2}) is equivalent to the following apparently stronger assertion:}
\[W^{\boldsymbol\lambda_{\mathbf A},j}_{\alpha,\mathbf f}\leqslant c_j\text{\rm \ on \ }S_D^{\lambda_{\mathbf A}^j}.\]\end{remark}

Let $\breve{Q}$ denote the {\it $c_\alpha$-reduced kernel\/} of $Q\subset\mathbb R^n$ \cite[p.~164]{L}, which is the set of all $x\in Q$ such that for any $r>0$ we have $c_\alpha\bigl(B(x,r)\cap Q\bigr)>0$.

For the sake of simplicity of formulation, in the following Theorem~\ref{desc-sup} we assume that in the case $\alpha=2$ the domain $D$ is {\it simply\/} connected.

\begin{theorem}\label{desc-sup}If a solution\/ $\boldsymbol\lambda^{\boldsymbol\xi}_{\mathbf A}=(\lambda_{\mathbf A}^i)_{i\in I}\in\mathfrak S^{\boldsymbol{\xi}}_{\alpha,\mathbf f}(\mathbf A,\mathbf a;\mathbb R^n)$ exists, then
\begin{equation}\label{lemma-desc-riesz}
S^{\lambda^p_{\mathbf A}}_{\mathbb R^n}=\left\{
\begin{array}{lll} \breve{A}_p & \text{ \ if \ } & \alpha<2,\\ \partial D  & \text{ \ if \ } & \alpha=2.\\ \end{array} \right.
\end{equation}
\end{theorem}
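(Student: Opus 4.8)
The plan is to reduce the assertion to a description of the support of an $\alpha$-Riesz balayage and then to exploit local, minimum-type principles of Riesz potential theory. A solution $\boldsymbol\lambda^{\boldsymbol\xi}_{\mathbf A}=(\lambda^i_{\mathbf A})_{i\in I}$ being assumed to exist, one has $G^{\boldsymbol\xi}_{\alpha,\mathbf f}(\mathbf A,\mathbf a;\mathbb R^n)=G_{\alpha,\mathbf f}(\boldsymbol\lambda^{\boldsymbol\xi}_{\mathbf A})<\infty$, so that Theorem~\ref{th-rel} applies; by the implication (i)$\Rightarrow$(ii) there (cf.\ also Theorem~\ref{th-suff}), $\lambda^p_{\mathbf A}=\sigma'$, where $\sigma:=\sum_{j\in I^+}\lambda^j_{\mathbf A}$ and $\sigma'$ is its $\alpha$-Riesz balayage onto $A_p=D^c$. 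Here $\sigma\in\mathcal E^+_\alpha(\mathbb R^n)$ is carried by the relatively closed subset $A^+:=\bigcup_{j\in I^+}A_j$ of $D$, it is bounded with $\sigma(\mathbb R^n)=\sum_{j\in I^+}a_j=a_p>0$ by (\ref{ap}), hence $\sigma\ne\mathbf 0$, and since $D^c$ is not $\alpha$-thin at infinity Theorem~\ref{bal-mass-th} gives $\sigma'(\mathbb R^n)=a_p>0$ as well. Thus everything comes down to identifying $S^{\sigma'}_{\mathbb R^n}$.

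I would first settle the inclusion ``$\subset$''. Since $\sigma'$ is $c_\alpha$-absolutely continuous and carried by $A_p$ (\cite[Corollaries~3.19 and~3.20]{FZ}), it charges no set of zero $\alpha$-Riesz capacity, which together with the definition of the $c_\alpha$-reduced kernel yields $S^{\sigma'}_{\mathbb R^n}\subset\breve A_p$. For $\alpha=2$ this is sharpened as follows: on the open set $\mathbb R^n\setminus\overline D$ (the interior of $D^c$) the potentials $\kappa_2(\cdot,\sigma')$ and $\kappa_2(\cdot,\sigma)$ coincide n.e.\ by (\ref{bal-eq}), while $\kappa_2(\cdot,\sigma)$ is harmonic there because $S^\sigma_{\mathbb R^n}\subset\overline D$; as two superharmonic functions coinciding n.e.\ on an open set coincide throughout it, $\kappa_2(\cdot,\sigma')$ is harmonic on $\mathbb R^n\setminus\overline D$, so $\sigma'$ puts no mass there and $S^{\sigma'}_{\mathbb R^n}\subset D^c\setminus(\mathbb R^n\setminus\overline D)=\partial D$.

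For the reverse inclusion I would fix $x_0$ in $\breve A_p$ (case $\alpha<2$), resp.\ in $\partial D$ (case $\alpha=2$, where the simple connectedness of $D$ is used to guarantee $\breve A_p=\partial D$, i.e.\ that $D^c$ has no polar boundary portion), so that in both cases $c_\alpha\bigl(A_p\cap B(x_0,r)\bigr)>0$ for all $r>0$; and I would argue by contradiction assuming $\sigma'\bigl(\overline B(x_0,r)\bigr)=0$ for some $r>0$. Put $u:=\kappa_\alpha(\cdot,\sigma)-\kappa_\alpha(\cdot,\sigma')=\kappa_\alpha(\cdot,\sigma-\sigma')$, well defined since $\sigma-\sigma'\in\mathcal E_\alpha(\mathbb R^n)$ by (\ref{l3-2}). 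Then $u\geqslant0$ on $\mathbb R^n$: by (\ref{bal-eq}) the two potentials agree n.e.\ on $D^c$, hence $\sigma'$-a.e., and the complete maximum principle for $\kappa_\alpha$ then gives $\kappa_\alpha(\cdot,\sigma')\leqslant\kappa_\alpha(\cdot,\sigma)$ everywhere. Also $u=g^\alpha_D(\cdot,\sigma)$ n.e.\ on $D$ by Lemma~\ref{l-hatg}, and $g^\alpha_D(\cdot,\sigma)>0$ on $D$ because the $\alpha$-Green kernel of the domain $D$ is strictly positive and $\sigma\ne\mathbf 0$; on the other hand $u=0$ n.e.\ on $D^c$, and since $c_\alpha\bigl(A_p\cap B(x_0,r)\bigr)>0$ there is $x_1\in A_p\cap B(x_0,r)$ with $u(x_1)=0$. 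As $\sigma'$ carries no mass in $B(x_0,r)$, $\kappa_\alpha(\cdot,\sigma')$ is $\alpha$-harmonic there and so $u$ is $\alpha$-superharmonic on $B(x_0,r)$. If $\alpha=2$, the strong minimum principle forces $u\equiv0$ on $B(x_0,r)$, contradicting $u=g^2_D(\cdot,\sigma)>0$ n.e.\ on the nonempty open set $D\cap B(x_0,r)$. If $\alpha<2$, the super-mean-value inequality at $x_1$ over balls $B(x_1,\rho)\Subset B(x_0,r)$, combined with the absolute continuity and strict positivity off $\overline B(x_1,\rho)$ of the exit distribution of the $\alpha$-stable process from $B(x_1,\rho)$, forces $u=0$ a.e.\ on $\mathbb R^n\setminus\overline B(x_1,\rho)$ for every small $\rho$, hence a.e.\ on $\mathbb R^n$, which is incompatible with $u=g^\alpha_D(\cdot,\sigma)>0$ n.e.\ on $D$. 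In either case $x_0\in S^{\sigma'}_{\mathbb R^n}$, and with the previous inclusion this gives (\ref{lemma-desc-riesz}).

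The main obstacle is this reverse inclusion, i.e.\ that $\sigma'$ ``fills'' all of $\breve A_p$ ($\alpha<2$), resp.\ all of $\partial D$ ($\alpha=2$), rather than a proper closed subset. For $\alpha<2$ the crucial point is the nonlocal character of $\alpha$-superharmonicity --- the exit distribution of the $\alpha$-stable process from a ball charging the entire complement of that ball --- so that a nonnegative $\alpha$-superharmonic function vanishing at one point of $B(x_0,r)$ must vanish a.e.\ on $\mathbb R^n$; for $\alpha=2$ the crucial point is the identification $\breve A_p=\partial D$, for which the simple connectedness of $D$ is invoked. One also has to be careful that $u$ is genuinely $\alpha$-superharmonic on the ball $B(x_0,r)$ (which is why one works with $\sigma'(\overline B(x_0,r))=0$, passing to a slightly smaller ball than a mere neighbourhood of $x_0$) and that the polar exceptional set in ``$u=0$ n.e.\ on $D^c$'' cannot exhaust $A_p\cap B(x_0,r)$ --- precisely the role of $x_0\in\breve A_p$.
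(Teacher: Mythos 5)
Your reduction to $\lambda^p_{\mathbf A}=\sigma'$ with $\sigma:=\sum_{j\in I^+}\lambda^j_{\mathbf A}$ is exactly the paper's starting point (via Theorem~\ref{th-rel}), but from there your route is genuinely different. The paper describes $S(\varepsilon_x')$ for each single $x\in D$ by passing through the Kelvin transform: $\varepsilon_x'$ is the Kelvin image of the $\kappa_\alpha$-equilibrium measure $\gamma_x$ of the compact inverse $K_x$ of $C\ell_{\overline{\mathbb R^n}}A_p$, whose support is identified as $\breve K_x$ (for $\alpha<2$) by combining $\kappa_\alpha(\cdot,\gamma_x)=1$ n.e.\ on $K_x$ with \cite[Theorem~1.28]{L}; the global statement then follows from the integral representation $\sigma'=\int\varepsilon_y'\,d\sigma(y)$. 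You bypass the Kelvin transform, the equilibrium measures and the integral representation entirely, working directly with $u=\kappa_\alpha(\cdot,\sigma-\sigma')$, which by Lemma~\ref{l-hatg} equals $g^\alpha_D(\cdot,\sigma)>0$ n.e.\ on $D$ and vanishes n.e.\ on $D^c$; the domination principle gives $u\geqslant0$, and if $\sigma'$ missed a ball around a point of $\breve A_p$ you would get a zero of the $\alpha$-superharmonic function $u$ inside that ball, whence $u=0$ $m_n$-a.e.\ by the strict positivity of the $\alpha$-harmonic measure of a small ball off its closure --- a contradiction. This is correct and arguably cleaner; note only that the nonlocal minimum principle you re-derive from M.~Riesz's formula for the exit distribution is precisely the content of \cite[Theorem~1.28]{L} that the paper invokes, so the two arguments rest on the same potential-theoretic principle, packaged differently. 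Your care about taking $\sigma'(\overline B(x_0,r))=0$ and about the exceptional set not exhausting $A_p\cap B(x_0,r)$ is exactly right.

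Two small caveats. First, your parenthetical ``$\breve A_p=\partial D$'' for $\alpha=2$ cannot be a set equality (interior points of $D^c$ lie in $\breve A_p$); what you actually use, and all you need, is the inclusion $\partial D\subset\breve A_p$. Second, the assertion that simple connectedness of $D$ forces $c_2\bigl(D^c\cap B(x_0,r)\bigr)>0$ for every $x_0\in\partial D$ is not literally true (e.g.\ $D$ a half-space with one interior point removed is simply connected, yet that point is a polar boundary portion which no $c_2$-absolutely continuous swept measure can charge). This is, however, an implicit regularity assumption on $\partial D$ that the paper's own proof relies on in exactly the same way when it cites \cite[Chapter~II, Section~3, n$^\circ$\,13]{L} for $S^{\gamma_x}=\partial K_x$, so it is a limitation of the theorem's formulation rather than a defect specific to your argument.
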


Assume now that $I^+=\{1\}$, $\mathbf a=\mathbf 1$, $\mathbf f=\mathbf 0$ and that there is a solution $\boldsymbol\lambda^{\boldsymbol\xi}_{\mathbf A}=(\lambda_{\mathbf A}^1,\lambda_{\mathbf A}^2)\in\mathfrak S^{\boldsymbol{\xi}}_{\alpha,\mathbf f}(\mathbf A,\mathbf 1;\mathbb R^n)$. Then, equivalently, $\lambda:=R_{\mathbf A}\boldsymbol\lambda^{\boldsymbol\xi}_{\mathbf A}=\lambda_{\mathbf A}^1-\lambda_{\mathbf A}^2$ is a solution to the minimum $\alpha$-Riesz energy problem
\begin{equation}\label{sc-pr}\inf\,\kappa_\alpha(\mu,\mu),\end{equation}
where $\mu$ ranges over all ({\it signed scalar\/} Radon) measures with $\mu^+\in\mathcal E^{\xi^1}_\alpha(A_1,1;\mathbb R^n)$ and $\mu^-\in\mathcal E^+_\alpha(A_2,1;\mathbb R^n)$. Since $\mathbf f=\mathbf 0$, we also see from (\ref{repp}) and (\ref{wpot'}) that
\begin{equation}\label{potl}\kappa_\alpha(\cdot,\lambda)=s_i\kappa_\alpha^{\boldsymbol\lambda^{\boldsymbol\xi}_{\mathbf A},i}(\cdot)=s_iW_{\alpha,\mathbf f}^{\boldsymbol\lambda^{\boldsymbol\xi}_{\mathbf A},i}(\cdot)\text{ \ n.e.\ on \ }\mathbb R^n, \ i=1,2.\end{equation}

\begin{theorem}\label{zone}With these assumptions and notations, we have
\begin{equation}\label{th}\kappa_\alpha(\cdot,\lambda)=\left\{
\begin{array}{lll} g^\alpha_D(\cdot,\lambda^+) & \text{n.e.\ on} & D,\\
0 & \text{on} & D^c\setminus I_{\alpha,D^c}.\\ \end{array} \right.
\end{equation}
Furthermore, assertion\/ {\rm(ii)} of Theorem\/~{\rm\ref{desc-pot}} holds, and\/ {\rm(\ref{b1})} and\/ {\rm(\ref{b2})} now take the form
\begin{align}\label{b21}\kappa_\alpha(\cdot,\lambda)&=c_1\text{ \ $(\xi^1-\lambda^+)$-a.e.},\\
\label{b11}\kappa_\alpha(\cdot,\lambda)&\leqslant c_1\text{ \ on \ }\mathbb R^n,
\end{align}
respectively, where\/ $0<c_1<\infty$. In addition, {\rm(\ref{b21})} and\/ {\rm(\ref{b11})} together with\/ $\kappa_\alpha(\cdot,\lambda)=0$ n.e.\ on\/ $D^c$ {\rm(}cf.\ {\rm(\ref{th})}{\rm)} determine uniquely the solution to the problem\/~{\rm(\ref{sc-pr})} among the admissible  measures\/ $\mu$. If moreover\/ $\kappa_\alpha(\cdot,\xi^1)$ is\/ {\rm(}finitely\/{\rm)} continuous on\/ $D$, then also
\begin{equation}\label{b21'}\kappa_\alpha(\cdot,\lambda)=c_1\text{ \ on \ }S^{\xi^1-\lambda^+}_D,\end{equation}
\begin{equation}\label{cg}c_{g_D^\alpha}\bigl(S_D^{\xi^1-\lambda^+}\bigr)<\infty.\end{equation}
Omitting now the requirement of the continuity of $\kappa_\alpha(\cdot,\xi^1)$, assume further that\/ $\alpha<2$ and\/ $m_n(D^c)>0$ where\/ $m_n$ is the\/ $n$-dimen\-sional Lebesgue measure. Then
\begin{equation}\label{pg}S_D^{\lambda^+}=S_D^{\xi^1},\end{equation}
\begin{equation}\label{supp}\kappa_\alpha(\cdot,\lambda)<c_1\text{ \ on \ }D\setminus S_D^{\xi^1}\quad\bigl({}=D\setminus S_D^{\lambda^+}\bigr).\end{equation}
\end{theorem}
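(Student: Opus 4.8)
Here I describe how I would establish Theorem~\ref{zone}, reading everything off the structure of the solution already in hand. Since a solution $\boldsymbol\lambda^{\boldsymbol\xi}_{\mathbf A}$ is assumed to exist, Theorem~\ref{th-rel} applies (its extremal values are finite) and gives $\lambda^p_{\mathbf A}=(\lambda^1_{\mathbf A})'=(\lambda^+)'$, cf.~(\ref{reprrr}); as $A_1\subset D$ and $A_p=D^c$ are disjoint, $\lambda^+=\lambda^1_{\mathbf A}$, $\lambda^-=\lambda^p_{\mathbf A}$, so $\lambda=\lambda^+-(\lambda^+)'$ with $\lambda^+\leqslant\xi^1\in\mathcal E^+_\alpha(A_1;\mathbb R^n)$ and $\lambda\in\mathcal E_\alpha(\mathbb R^n)$. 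Relation (\ref{th}) splits into two parts: on $D$ it is Lemma~\ref{l-hatg} applied to $\mu=\lambda^+$, yielding $g^\alpha_D(\cdot,\lambda^+)=\kappa_\alpha(\cdot,\lambda^+-(\lambda^+)')=\kappa_\alpha(\cdot,\lambda)$ n.e.\ on $D$; on $D^c$ I would view $\mathbf f=\mathbf 0$ as an instance of Case~II with $\zeta=0$ and invoke (\ref{reprrr1'}) of Theorem~\ref{desc-pot} (with $i=p$) together with (\ref{potl}), or equivalently use that $\kappa_\alpha(\cdot,(\lambda^+)')=\kappa_\alpha(\cdot,\lambda^+)$ at every $\alpha$-regular point of $D^c$. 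Next, Theorem~\ref{desc-pot} itself applies (for $\mathbf f=\mathbf 0$, (\ref{acirc}) is trivial and each $f_j$ is lower bounded, and assertion~(i) holds by hypothesis), furnishing a constant $c_1$ with $\kappa_\alpha(\cdot,\lambda)\geqslant c_1$ $(\xi^1-\lambda^+)$-a.e.\ and $\kappa_\alpha(\cdot,\lambda)\leqslant c_1$ $\lambda^+$-a.e.\ (via (\ref{potl}), $s_1=+1$). Rewriting the second inequality as $\kappa_\alpha(\cdot,\lambda^+)\leqslant\kappa_\alpha(\cdot,\lambda^-)+c_1$ $\lambda^+$-a.e.\ and applying the complete maximum principle for $\kappa_\alpha$, $\alpha\in(0,2]$ (legitimate since $\lambda^+\in\mathcal E^+_\alpha(\mathbb R^n)$), it extends to all of $\mathbb R^n$; subtracting $\kappa_\alpha(\cdot,\lambda^-)$ gives (\ref{b11}), and combining with the first inequality gives (\ref{b21}). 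Finally $\xi^1-\lambda^+\ne0$ (as $\xi^1(A_1)>1=\lambda^+(A_1)$) does not charge polar sets, so by (\ref{b21}) $c_1\leqslant\kappa_\alpha(\cdot,\lambda^+)<\infty$ at $(\xi^1-\lambda^+)$-a.e.\ point, whence $c_1<\infty$; and since $\kappa_\alpha(\cdot,\lambda)=0$ $\lambda^-$-a.e.\ on $D^c$, $0<\|\lambda\|^2_\alpha=\int\kappa_\alpha(\cdot,\lambda)\,d\lambda^+\leqslant c_1$, so $c_1\in(0,\infty)$.

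For the uniqueness assertion I would recast (\ref{sc-pr}) as minimizing $\kappa_\alpha(\mu,\mu)=\|\mu\|^2_\alpha$ over the convex class $\mathcal M$ of admissible (signed) measures $\mu$ ($\mu^+\in\mathcal E^{\xi^1}_\alpha(A_1,1;\mathbb R^n)$, $\mu^-\in\mathcal E^+_\alpha(A_2,1;\mathbb R^n)$); by strict positive definiteness it has at most one solution. It then suffices to show that any admissible $\lambda$ satisfying (\ref{b21}), (\ref{b11}) and $\kappa_\alpha(\cdot,\lambda)=0$ n.e.\ on $D^c$ obeys the variational inequality $\kappa_\alpha(\lambda,\mu-\lambda)\geqslant0$ for every $\mu\in\mathcal M$. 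Writing the Jordan decomposition $\mu^+-\lambda^+=\rho_1-\rho_2$, one has $\rho_1\leqslant\xi^1-\lambda^+$, $\rho_2\leqslant\lambda^+$ and $\rho_1(A_1)=\rho_2(A_1)$ (using $\mu^+\leqslant\xi^1$, $\mu^+\geqslant0$ and $\mu^+(A_1)=\lambda^+(A_1)$), whence by (\ref{b21}) and (\ref{b11})
\[\int\kappa_\alpha(\cdot,\lambda)\,d(\mu^+-\lambda^+)=c_1\rho_1(A_1)-\int\kappa_\alpha(\cdot,\lambda)\,d\rho_2\geqslant c_1\rho_1(A_1)-c_1\rho_2(A_1)=0,\]
while $\int\kappa_\alpha(\cdot,\lambda)\,d(\mu^--\lambda^-)=0$ because $\mu^-,\lambda^-\in\mathcal E^+_\alpha$ are carried by $D^c$, where $\kappa_\alpha(\cdot,\lambda)$ vanishes off a polar set. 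Hence $\kappa_\alpha(\lambda,\mu-\lambda)\geqslant0$, so $\|\mu\|^2_\alpha=\|\lambda\|^2_\alpha+2\kappa_\alpha(\lambda,\mu-\lambda)+\|\mu-\lambda\|^2_\alpha\geqslant\|\lambda\|^2_\alpha$, with equality only for $\mu=\lambda$; this pins down $\lambda$ uniquely among the admissible measures.

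If $\kappa_\alpha(\cdot,\xi^1)$ is continuous on $D$, then $\kappa_\alpha(\cdot,\xi^1-\lambda^+)$, being both l.s.c.\ (the potential of a positive measure) and u.s.c.\ on $D$ (it equals $\kappa_\alpha(\cdot,\xi^1)-\kappa_\alpha(\cdot,\lambda^+)$), is continuous on $D$; hence so are $\kappa_\alpha(\cdot,\lambda^+)$ and, adding the finite continuous-on-$D$ term $-\kappa_\alpha(\cdot,\lambda^-)$ ($\lambda^-$ being carried by $D^c$), also $\kappa_\alpha(\cdot,\lambda)$. Then $\{x\in D:\kappa_\alpha(x,\lambda)=c_1\}$ is relatively closed and carries $\xi^1-\lambda^+$ by (\ref{b21}), so contains $S^{\xi^1-\lambda^+}_D$; this is (\ref{b21'}). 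For (\ref{cg}), on $F:=S^{\xi^1-\lambda^+}_D$ we have $g^\alpha_D(\cdot,\lambda^+)=\kappa_\alpha(\cdot,\lambda)=c_1$ (n.e.) by (\ref{th}) and (\ref{b21'}); with $\nu:=\lambda^+/c_1\in\mathcal E^+_g(D)$ (finite $g$-energy by (\ref{l3-1})) one gets $g^\alpha_D(\cdot,\nu)\geqslant1$ n.e.\ on $F$, so $1=\mu(F)\leqslant g(\mu,\nu)\leqslant\|\mu\|_g\|\nu\|_g$ for every $\mu\in\mathcal E^+_g(F,1;D)$, i.e.\ $c_{g^\alpha_D}(F)\leqslant\|\nu\|^2_g<\infty$. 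In the remaining case $\alpha<2$, $m_n(D^c)>0$ I would prove (\ref{supp}) first. If $\kappa_\alpha(x_0,\lambda)=c_1$ for some $x_0\in D\setminus S^{\lambda^+}_D$, choose $\rho>0$ with $\overline{B(x_0,\rho)}\subset D\setminus S^{\lambda^+}_D$; since $\lambda=\lambda^+-(\lambda^+)'$ puts no mass on $\overline{B(x_0,\rho)}$, $\kappa_\alpha(\cdot,\lambda)$ is finite, continuous and $\alpha$-harmonic on $B(x_0,\rho)$, so $c_1=\kappa_\alpha(x_0,\lambda)=\int\kappa_\alpha(\cdot,\lambda)\,d\omega$ for the $\alpha$-harmonic measure $\omega$ of $B(x_0,\rho)$ at $x_0$; for $\alpha<2$ this $\omega$ is a probability measure (the exterior of a ball is not $\alpha$-thin at infinity) carried by $\mathbb R^n\setminus\overline{B(x_0,\rho)}$ with an $m_n$-a.e.\ positive density, so by (\ref{b11}) equality forces $\kappa_\alpha(\cdot,\lambda)=c_1$ $m_n$-a.e.\ off $\overline{B(x_0,\rho)}$; letting $\rho\downarrow0$, $\kappa_\alpha(\cdot,\lambda)=c_1$ $m_n$-a.e.\ on $\mathbb R^n$, in particular $m_n$-a.e.\ on $D^c$, contradicting $\kappa_\alpha(\cdot,\lambda)=0$ n.e.\ (hence $m_n$-a.e.) on $D^c$ together with $c_1>0$ and $m_n(D^c)>0$. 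This gives (\ref{supp}); then (\ref{pg}) follows since $S^{\lambda^+}_D\subseteq S^{\xi^1}_D$ ($\lambda^+\leqslant\xi^1$), while if some $x\in S^{\xi^1}_D$ had a neighbourhood $W\subset D$ with $\lambda^+(W)=0$, then $(\xi^1-\lambda^+)(W)=\xi^1(W)>0$ and (\ref{b1}) would force $\kappa_\alpha(\cdot,\lambda)\geqslant c_1$ on a positive-$m_n$-measure subset of $W\subset D\setminus S^{\lambda^+}_D$, against (\ref{supp}).

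The step I expect to be the main obstacle is the strong-maximum-principle argument behind (\ref{supp}): it rests on the fact, specific to $\alpha<2$, that the $\alpha$-harmonic measure of a ball evaluated at its centre is a probability measure charging the \emph{entire} exterior with an $m_n$-a.e.\ positive density, so that an interior maximum of $\kappa_\alpha(\cdot,\lambda)$ propagates to a set of full Lebesgue measure; this is exactly where the hypotheses $\alpha<2$ (the property fails for the Newtonian kernel) and $m_n(D^c)>0$ (needed to turn that propagation into a contradiction via the vanishing of $\kappa_\alpha(\cdot,\lambda)$ on $D^c$) are used. All the other steps are essentially bookkeeping with Theorems~\ref{th-rel} and~\ref{desc-pot}, Lemma~\ref{l-hatg}, the complete maximum principle, and the standard potential/energy–capacity estimates.
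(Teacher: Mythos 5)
Your proposal is correct and follows the paper's route in all essentials: (\ref{th}) via Lemma~\ref{l-hatg} and (\ref{reprrr1'})--(\ref{potl}); (\ref{b11}) by rewriting (\ref{b2}) as $\kappa_\alpha(\cdot,\lambda^+)\leqslant\kappa_\alpha(\cdot,\lambda^-)+c_1$ $\lambda^+$-a.e.\ and invoking the complete maximum principle; (\ref{b21}) by combining with (\ref{b1}); and the $\alpha<2$ assertions by a touching argument for an $\alpha$-harmonic function against the $\alpha$-superharmonic majorant $\kappa_\alpha(\cdot,\lambda^-)+c_1$ on a ball avoiding $S^{\lambda^+}_D$, exploiting that the resulting identity propagates $m_n$-a.e.\ and collides with $\kappa_\alpha(\cdot,\lambda)=0$ on $D^c$. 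Where you deviate, you deviate harmlessly and sometimes more explicitly: you get $c_1>0$ from $0<\|\lambda\|_\alpha^2=\int\kappa_\alpha(\cdot,\lambda)\,d\lambda^+\leqslant c_1$ rather than from the strict positivity of $g^\alpha_D(\cdot,\lambda^+)$ on $D$; you prove the uniqueness claim by a self-contained variational inequality ($\kappa_\alpha(\lambda,\mu-\lambda)\geqslant0$ via the Jordan decomposition of $\mu^+-\lambda^+$) instead of citing the (ii)$\Rightarrow$(i) direction of Theorem~\ref{desc-pot}, which is in fact the same mechanism; you prove (\ref{cg}) by the Cauchy--Schwarz capacity estimate that underlies the cited \cite[Lemma~3.2.2]{F1}; and you replace the appeal to \cite[Theorem~1.28]{L} by the explicit mean-value identity with respect to the $\alpha$-harmonic measure of the ball, which is the correct reason that result holds for $\alpha<2$. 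Proving (\ref{supp}) first and deducing (\ref{pg}) (the paper does the reverse) is also fine.

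One wording slip: in deducing (\ref{pg}) you say that (\ref{b1}) forces $\kappa_\alpha(\cdot,\lambda)\geqslant c_1$ on a \emph{positive-$m_n$-measure} subset of $W$; that is not justified (a set of positive $(\xi^1-\lambda^+)$-measure may be Lebesgue-null), but it is also not needed --- since (\ref{supp}) is a pointwise strict inequality on all of $D\setminus S^{\lambda^+}_D$, a single point of $W$ where $\kappa_\alpha(\cdot,\lambda)=c_1$ suffices, and such a point exists because $(\xi^1-\lambda^+)(W)>0$ and (\ref{b21}) holds $(\xi^1-\lambda^+)$-a.e.
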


\subsection{An extension of the theory}\label{sec:ext} Parallel with Problem~\ref{pr2gen} for a constraint $\boldsymbol\xi$ given by (\ref{constr1}) and acting only on measures concentrated on the positive plates $A_j$, $j\in I^+$, of the generalized condenser $\mathbf A=(A_i)_{i\in I}$, consider also Problem~\ref{pr2gen} for $\boldsymbol\sigma=(\sigma^i)_{i\in I}\in\mathfrak M^+(\mathbf A;\mathbb R^n)$ (in place of $\boldsymbol\xi$) defined as follows:
\begin{equation}\label{sp}\sigma^j=\xi^j\text{ \ for all \ }j\in I^+,\qquad\sigma^p\geqslant\Bigl(\sum_{j\in I^+}\,\sigma^j\Bigr)'\quad\Bigl[{}=\Bigl(\sum_{j\in I^+}\,\xi^j\Bigr)'\Bigr].\end{equation}
Since in consequence of (\ref{t-mass'}), (\ref{ap}) and (\ref{sp}) we have $\sigma^p(A_p)>a_p$, the measure $\boldsymbol\sigma$ thus defined can be thought of as an element of $\mathfrak C(\mathbf A;\mathbb R^n)$. In contrast to $\boldsymbol\xi$, {\it the constraint\/ $\boldsymbol\sigma$ is acting on all the components of\/} $\boldsymbol\mu\in\mathcal E^+_\alpha(\mathbf A,\mathbf a;\mathbb R^n)$. Also note that $\sigma^p(A_p)$ and $\kappa_\alpha(\sigma^p,\sigma^p)$ may both be infinite.

\begin{theorem}\label{l:eq}The following identity holds:
\begin{equation}\label{eq:eq}G_{\alpha,\mathbf{f}}^{\boldsymbol\sigma}(\mathbf A,\mathbf a;\mathbb R^n)=G_{\alpha,\mathbf{f}}^{\boldsymbol\xi}(\mathbf A,\mathbf a;\mathbb R^n).
\end{equation}
If moreover these\/ {\rm(}equal\/{\rm)} extremal values are finite, then Problem\/~{\rm\ref{pr2gen}} for\/ $\mathbf A$, $\mathbf a$, $\mathbf f$ and\/ $\boldsymbol\xi$ is solvable if and only if so is that for\/ $\mathbf A$, $\mathbf a$, $\mathbf f$ and\/ $\boldsymbol\sigma$, and in the affirmative case
\begin{equation}\label{l:id}\mathfrak S^{\boldsymbol{\sigma}}_{\alpha,\mathbf f}(\mathbf A,\mathbf a;\mathbb R^n)=\mathfrak S^{\boldsymbol{\xi}}_{\alpha,\mathbf f}(\mathbf A,\mathbf a;\mathbb R^n).\end{equation}
\end{theorem}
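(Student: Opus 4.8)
The plan is to combine the elementary set inclusion $\mathcal E^{\boldsymbol\sigma}_{\alpha,\mathbf f}(\mathbf A,\mathbf a;\mathbb R^n)\subseteq\mathcal E^{\boldsymbol\xi}_{\alpha,\mathbf f}(\mathbf A,\mathbf a;\mathbb R^n)$ with the balayage-correction trick already used in the proof of Theorem~\ref{th-rel}. Indeed, every $\boldsymbol\mu$ admissible for the $\boldsymbol\sigma$-problem is admissible for the $\boldsymbol\xi$-problem, since $\sigma^j=\xi^j$ for $j\in I^+$ while the $\boldsymbol\xi$-constraint $\xi^p=\infty$ on the negative plate is vacuous; hence $\mathcal E^{\boldsymbol\sigma}_{\alpha,\mathbf f}(\mathbf A,\mathbf a;\mathbb R^n)\subseteq\mathcal E^{\boldsymbol\xi}_{\alpha,\mathbf f}(\mathbf A,\mathbf a;\mathbb R^n)$, and therefore $G_{\alpha,\mathbf f}^{\boldsymbol\sigma}(\mathbf A,\mathbf a;\mathbb R^n)\geqslant G_{\alpha,\mathbf f}^{\boldsymbol\xi}(\mathbf A,\mathbf a;\mathbb R^n)$, with equality forcing also $\mathfrak S^{\boldsymbol\sigma}_{\alpha,\mathbf f}(\mathbf A,\mathbf a;\mathbb R^n)\subseteq\mathfrak S^{\boldsymbol\xi}_{\alpha,\mathbf f}(\mathbf A,\mathbf a;\mathbb R^n)$.

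For the reverse inequality in (\ref{eq:eq}) I would take an arbitrary $\boldsymbol\nu\in\mathcal E^{\boldsymbol\xi}_{\alpha,\mathbf f}(\mathbf A,\mathbf a;\mathbb R^n)$ --- if there is none, both extremal values equal $+\infty$ and the identity is trivial --- and pass to $\tilde{\boldsymbol\nu}=(\tilde\nu^i)_{i\in I}$ with $\tilde\nu^j=\nu^j$ for $j\in I^+$ and $\tilde\nu^p=\bigl(\sum_{j\in I^+}\nu^j\bigr)'$, exactly as in (\ref{111}). The verifications needed are all at hand: (a) $\tilde\nu^p\in\mathcal E_\alpha^+(A_p,a_p;\mathbb R^n)$, using that $D^c$ is not $\alpha$-thin at infinity together with mass conservation (\ref{t-mass'}) and the balance condition (\ref{ap}); (b) $\langle\mathbf f,\tilde{\boldsymbol\nu}\rangle=\langle\mathbf f^+,\boldsymbol\nu^+\rangle$, since $f_p=0$ n.e.\ on $A_p$ and $D^c$ is $\nu^j$-negligible for $j\in I^+$; (c) $G_{\alpha,\mathbf f}(\tilde{\boldsymbol\nu})\leqslant G_{\alpha,\mathbf f}(\boldsymbol\nu)$, because $\kappa_\alpha(\boldsymbol\mu,\boldsymbol\mu)=\bigl\|\sum_{j\in I^+}\mu^j-\mu^p\bigr\|_\alpha^2$ and the balayage $\bigl(\sum_{j\in I^+}\nu^j\bigr)'$ is the orthogonal projection of $\sum_{j\in I^+}\nu^j$ onto the cone $\mathcal E_\alpha^+(D^c;\mathbb R^n)$, hence minimizes that norm, cf.\ (\ref{proj}); and, crucially, (d) $\tilde\nu^p\leqslant\sigma^p$. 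Point (d) is where the particular shape (\ref{sp}) of $\boldsymbol\sigma$ enters: from $\nu^j\leqslant\xi^j$ we get $\sum_{j\in I^+}\nu^j\leqslant\sum_{j\in I^+}\xi^j$, and the monotonicity of $\alpha$-Riesz balayage on positive measures --- immediate from the integral representation (\ref{int-repr}), since $\varepsilon_y'\geqslant0$ --- gives $\tilde\nu^p=\bigl(\sum_{j\in I^+}\nu^j\bigr)'\leqslant\bigl(\sum_{j\in I^+}\xi^j\bigr)'\leqslant\sigma^p$. Together with $\tilde\nu^j=\nu^j\leqslant\xi^j=\sigma^j$ this shows $\tilde{\boldsymbol\nu}\in\mathcal E^{\boldsymbol\sigma}_{\alpha,\mathbf f}(\mathbf A,\mathbf a;\mathbb R^n)$, whence $G_{\alpha,\mathbf f}^{\boldsymbol\sigma}(\mathbf A,\mathbf a;\mathbb R^n)\leqslant G_{\alpha,\mathbf f}(\tilde{\boldsymbol\nu})\leqslant G_{\alpha,\mathbf f}(\boldsymbol\nu)$; taking the infimum over $\boldsymbol\nu$ yields (\ref{eq:eq}).

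Assume now the common value is finite. The inclusion $\mathfrak S^{\boldsymbol\sigma}_{\alpha,\mathbf f}(\mathbf A,\mathbf a;\mathbb R^n)\subseteq\mathfrak S^{\boldsymbol\xi}_{\alpha,\mathbf f}(\mathbf A,\mathbf a;\mathbb R^n)$ was already observed. For the opposite inclusion, take $\boldsymbol\lambda=(\lambda^i)_{i\in I}\in\mathfrak S^{\boldsymbol\xi}_{\alpha,\mathbf f}(\mathbf A,\mathbf a;\mathbb R^n)$. Since the extremal value is finite, Theorem~\ref{th-rel} applies and its part~(ii) gives $\lambda^p=\bigl(\sum_{j\in I^+}\lambda^j\bigr)'$; arguing verbatim as in (d), $\lambda^p\leqslant\bigl(\sum_{j\in I^+}\xi^j\bigr)'\leqslant\sigma^p$, while $\lambda^j\leqslant\xi^j=\sigma^j$ for $j\in I^+$. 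Hence $\boldsymbol\lambda\in\mathcal E^{\boldsymbol\sigma}_{\alpha,\mathbf f}(\mathbf A,\mathbf a;\mathbb R^n)$, and since $G_{\alpha,\mathbf f}(\boldsymbol\lambda)=G_{\alpha,\mathbf f}^{\boldsymbol\xi}(\mathbf A,\mathbf a;\mathbb R^n)=G_{\alpha,\mathbf f}^{\boldsymbol\sigma}(\mathbf A,\mathbf a;\mathbb R^n)$ by (\ref{eq:eq}), $\boldsymbol\lambda$ is also a solution of the $\boldsymbol\sigma$-problem. This establishes (\ref{l:id}) and, in particular, that one problem is solvable exactly when the other is.

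The whole argument is short once Theorems~\ref{th-rel} and~\ref{bal-mass-th} and the projection property (\ref{proj}) are invoked; the only genuine point is (d) --- the monotonicity of the $\alpha$-Riesz balayage combined with the exact choice $\sigma^p\geqslant\bigl(\sum_{j\in I^+}\xi^j\bigr)'$ built into (\ref{sp}), which is precisely what guarantees that the balayage-corrected minimizer $\tilde{\boldsymbol\nu}$, as well as any $\boldsymbol\xi$-solution, lands in the strictly smaller class $\mathcal E^{\boldsymbol\sigma}_{\alpha,\mathbf f}(\mathbf A,\mathbf a;\mathbb R^n)$, and nothing weaker than (\ref{sp}) would suffice.
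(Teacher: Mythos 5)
Your proof is correct and follows essentially the same route as the paper's: the trivial inclusion $\mathcal E^{\boldsymbol\sigma}_{\alpha,\mathbf f}\subset\mathcal E^{\boldsymbol\xi}_{\alpha,\mathbf f}$ for one inequality, the balayage-corrected measure $\tilde{\boldsymbol\nu}$ with the projection property (\ref{proj}) for the other, and Theorem~\ref{th-rel} (via (\ref{reprrr})) to transfer $\boldsymbol\xi$-minimizers into the $\boldsymbol\sigma$-class. The only cosmetic difference is that you justify the monotonicity of balayage via the integral representation (\ref{int-repr}), while the paper invokes linearity of balayage together with positivity; these are interchangeable.
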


\begin{proof} Indeed, $G_{\alpha,\mathbf f}^{\boldsymbol\sigma}(\mathbf A,\mathbf a;\mathbb R^n)\geqslant G_{\alpha,\mathbf f}^{\boldsymbol\xi}(\mathbf A,\mathbf a;\mathbb R^n)$ follows from the relation
\begin{equation}\label{pr:inclus}\mathcal E_{\alpha,\mathbf f}^{\boldsymbol\sigma}(\mathbf A,\mathbf a;\mathbb R^n)\subset\mathcal E_{\alpha,\mathbf{f}}^{\boldsymbol\xi}(\mathbf A,\mathbf a;\mathbb R^n).\end{equation}
To establish the converse inequality, assume that $G_{\alpha,\mathbf f}^{\boldsymbol\xi}(\mathbf A,\mathbf a;\mathbb R^n)<\infty$ and fix a minimizer
$\boldsymbol\nu\in\mathcal E_{\alpha,\mathbf f}^{\boldsymbol\xi}(\mathbf A,\mathbf a;\mathbb R^n)$. Define $\tilde{\boldsymbol\nu}=(\tilde{\nu}^i)_{i\in I}\in\mathfrak M^+(\mathbf A;\mathbb R^n)$ by the equalities
\begin{equation}\label{pr:l:1}\tilde{\boldsymbol\nu}^+=\boldsymbol\nu^+\text{ \  and \ }\tilde{\nu}^p=(R_{\mathbf A}\boldsymbol{\nu}^+)'.\end{equation}
Clearly $\tilde{\boldsymbol\nu}\in\mathcal E^+_\alpha(\mathbf A;\mathbb R^n)$, and moreover $\tilde{\boldsymbol\nu}\in\mathcal E^+_\alpha(\mathbf A,\mathbf a;\mathbb R^n)$ which follows from (\ref{t-mass'}), (\ref{ap}) and (\ref{pr:l:1}) since $A_p$ is not $\alpha$-thin at infinity. By the linearity of balayage and (\ref{sp}) we actually have $\tilde{\boldsymbol\nu}\in\mathcal E_\alpha^{\boldsymbol\sigma}(\mathbf A,\mathbf a;\mathbb R^n)$, and finally $\tilde{\boldsymbol\nu}\in\mathcal E_{\alpha,\mathbf f}^{\boldsymbol\sigma}(\mathbf A,\mathbf a;\mathbb R^n)$ by (\ref{plus}). In consequence of (\ref{proj}), (\ref{plus}) and (\ref{pr:l:1}) we therefore obtain
\begin{align*}G_{\alpha,\mathbf f}(\boldsymbol\nu)&=\kappa_\alpha(\boldsymbol\nu,\boldsymbol\nu)+2\langle\mathbf{f}^+,\boldsymbol{\nu}^+\rangle=
\|R_{\mathbf A}\boldsymbol{\nu}^+-\nu^p\|^2_\alpha+2\langle\mathbf{f}^+,\boldsymbol{\nu}^+\rangle\\
\notag{}&\geqslant\|R_{\mathbf A}\boldsymbol{\nu}^+-(R_{\mathbf A}\boldsymbol{\nu}^+)'\|^2_\alpha
+2\langle\mathbf{f}^+,\boldsymbol{\nu}^+\rangle=G_{\alpha,\mathbf f}(\tilde{\boldsymbol\nu})\geqslant G_{\alpha,\mathbf f}^{\boldsymbol\sigma}(\mathbf A,\mathbf a;\mathbb R^n),\end{align*}
which establishes (\ref{eq:eq}) in view of the arbitrary choice of $\boldsymbol\nu\in\mathcal E_{\alpha,\mathbf f}^{\boldsymbol\xi}(\mathbf A,\mathbf a;\mathbb R^n)$.

Assuming now that (\ref{cggen}) holds, we proceed to prove (\ref{l:id}). The inclusion $\mathfrak S^{\boldsymbol{\sigma}}_{\alpha,\mathbf f}(\mathbf A,\mathbf a;\mathbb R^n)\subset\mathfrak S^{\boldsymbol{\xi}}_{\alpha,\mathbf f}(\mathbf A,\mathbf a;\mathbb R^n)$ is obvious because of (\ref{eq:eq}) and (\ref{pr:inclus}). To establish the converse inclusion, fix $\boldsymbol\lambda=(\lambda^i)_{i\in I}\in\mathfrak S^{\boldsymbol{\xi}}_{\alpha,\mathbf f}(\mathbf A,\mathbf a;\mathbb R^n)$. Then by (\ref{reprrr}) we have $\lambda^p=(R_{\mathbf A}\boldsymbol{\lambda}^+)'$, and in the same manner as in the preceding paragraph we obtain $\boldsymbol\lambda\in\mathcal E_{\alpha,\mathbf f}^{\boldsymbol\sigma}(\mathbf A,\mathbf a;\mathbb R^n)$. Hence $\boldsymbol\lambda$ also belongs to $\mathfrak S^{\boldsymbol{\sigma}}_{\alpha,\mathbf f}(\mathbf A,\mathbf a;\mathbb R^n)$, for $G_{\alpha,\mathbf f}(\boldsymbol\lambda)=G_{\alpha,\mathbf f}^{\boldsymbol\xi}(\mathbf A,\mathbf a;\mathbb R^n)=G_{\alpha,\mathbf f}^{\boldsymbol\sigma}(\mathbf A,\mathbf a;\mathbb R^n)$ by~(\ref{eq:eq}).
\end{proof}

Thus, {\it the theory of minimum\/ $\alpha$-Riesz energy problems with a constraint\/ $\boldsymbol\xi$ given by\/ {\rm(\ref{constr1})} and acting only on measures concentrated on the\/ $A_j$, $j\in I^+$, developed in Section\/~{\rm\ref{form:main}}, remains valid in its full generality for the constraint\/ $\boldsymbol\sigma$, defined by\/ {\rm(\ref{sp})} and acting on all the components of\/} $\boldsymbol\mu\in\mathcal E^+_\alpha(\mathbf A,\mathbf a;\mathbb R^n)$.

\section{Proofs of Theorems~\ref{th-suff} and~\ref{th-unsuff}}\label{sec-pr1}

Observe that, if Case II takes place, then
\begin{equation}\label{IIgg}\zeta\in\mathcal E_g(D),\end{equation}
\begin{equation}\label{IIg}f_j=\kappa_\alpha(\cdot,\zeta-\zeta')=g(\cdot,\zeta)\text{ \ $c_g$-n.e.\ on \ }D\text{ \ for all \ }j\in I^+.\end{equation}
Indeed, (\ref{IIgg}) is obvious by (\ref{l3-1}), and (\ref{IIg}) holds by Lemma~\ref{l-hatg} and footnote~\ref{RG}. By (\ref{IIgg}) and (\ref{IIg}), in Case~II for every $\boldsymbol{\nu}\in\mathcal E^+_g(\mathbf A^+;D)$ we get
\begin{align}\notag G_{g,\mathbf f^+}(\boldsymbol{\nu})&=\|R_{\mathbf A^+}\boldsymbol{\nu}\|^2_g+2\sum_{j\in I^+}\,g(\zeta,\nu^j)\\
{}&=\|R_{\mathbf A^+}\boldsymbol{\nu}\|^2_g+2g(\zeta,R_{\mathbf A^+}\boldsymbol{\nu})=
\|R_{\mathbf A^+}\boldsymbol{\nu}+\zeta\|^2_g-\|\zeta\|_g^2.\label{est:bel}\end{align}

\subsection{Proof of Theorem~\ref{th-suff}} By Theorem~\ref{th-rel}, Theorem~\ref{th-suff} will be proved once we have established the following assertion.

\begin{theorem}\label{exists}Under the assumptions of Theorem\/~{\rm\ref{th-suff}}, Problem\/~{\rm\ref{pr2gen}} for\/ $D$, $g$, $\mathbf A^+$, $\mathbf a^+$, $\mathbf f^+$ and\/ $\boldsymbol\xi^+$ is solvable, i.e.\ there is\/ $\boldsymbol\mu\in\mathcal E_{g,\mathbf{f}^+}^{\boldsymbol\xi^+}(\mathbf A^+,\mathbf a^+;D)$ with
\[G_{g,\mathbf f^+}(\boldsymbol\mu)=G_{g,\mathbf{f}^+}^{\boldsymbol\xi^+}(\mathbf A^+,\mathbf a^+;D).\]
\end{theorem}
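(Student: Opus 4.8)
The plan is to solve Problem~\ref{pr2gen} for $D$, $g=g^\alpha_D$, $\mathbf A^+$, $\mathbf a^+$, $\mathbf f^+$, $\boldsymbol\xi^+$ directly, by combining the vague topology with the strong completeness of the cone $\mathcal E^+_g(D)$ that is guaranteed by the perfectness of the $\alpha$-Green kernel (Remark~\ref{rem:clas} and Theorem~\ref{fu-complete}). First I would record that the extremal value $G^{\boldsymbol\xi^+}_{g,\mathbf f^+}(\mathbf A^+,\mathbf a^+;D)$ is finite: it is $<\infty$ by the hypothesis (\ref{cggen}) together with the identity (\ref{equality}) of Theorem~\ref{th-rel}, and it is $>-\infty$ trivially in Case~I (where $\mathbf f^+\geqslant0$) and by (\ref{est:bel}) in Case~II. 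Since $\mathbf A^+$ consists of positive plates only, $R_{\mathbf A^+}\boldsymbol\mu=\sum_{j\in I^+}\mu^j\geqslant0$, so via the isometry of Theorem~\ref{lemma:semimetric} the argument will take place inside the cone $\mathcal E^+_g(D)$. Fix a minimizing sequence $\{\boldsymbol\mu_k\}\subset\mathcal E^{\boldsymbol\xi^+}_{g,\mathbf f^+}(\mathbf A^+,\mathbf a^+;D)$.

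The first step is to show that $\{\boldsymbol\mu_k\}$ is strong Cauchy. The class $\mathcal E^{\boldsymbol\xi^+}_{g,\mathbf f^+}(\mathbf A^+,\mathbf a^+;D)$ is convex, and $G_{g,\mathbf f^+}$ is the sum of the quadratic form $\|R_{\mathbf A^+}(\cdot)\|^2_g$ and a linear functional, so the parallelogram identity in the pre-Hilbert space $\mathcal E_g(D)$ applied to $R_{\mathbf A^+}\boldsymbol\mu_k$ and $R_{\mathbf A^+}\boldsymbol\mu_\ell$ — exactly as in the proof of Lemma~\ref{lemma:unique:}, using that $(\boldsymbol\mu_k+\boldsymbol\mu_\ell)/2$ is admissible by convexity — will give $\|R_{\mathbf A^+}\boldsymbol\mu_k-R_{\mathbf A^+}\boldsymbol\mu_\ell\|^2_g\leqslant 2G_{g,\mathbf f^+}(\boldsymbol\mu_k)+2G_{g,\mathbf f^+}(\boldsymbol\mu_\ell)-4G^{\boldsymbol\xi^+}_{g,\mathbf f^+}(\mathbf A^+,\mathbf a^+;D)$, whose right-hand side tends to $0$. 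Thus $\{R_{\mathbf A^+}\boldsymbol\mu_k\}$ is strong Cauchy in $\mathcal E^+_g(D)$, equivalently $\{\boldsymbol\mu_k\}$ is strong Cauchy in $\mathcal E^+_g(\mathbf A^+;D)$ by (\ref{isom}).

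Next I would extract a limit. As $|\mu^j_k(\varphi)|\leqslant a_j\sup_D|\varphi|$ for every $\varphi\in C_0(D)$ and $j\in I^+$, the family $\{\boldsymbol\mu_k\}$ is vaguely bounded, hence by Lemma~\ref{l:vague:c} and the first countability of the vague topology some subsequence converges vaguely to a measure $\boldsymbol\mu\in\mathfrak M^+(\mathbf A^+;D)$, and then $R_{\mathbf A^+}\boldsymbol\mu_k\to R_{\mathbf A^+}\boldsymbol\mu$ vaguely. By Theorem~\ref{fu-complete} the strong Cauchy sequence $\{R_{\mathbf A^+}\boldsymbol\mu_k\}$ in the complete cone $\mathcal E^+_g(D)$ converges strongly to its unique vague cluster point $R_{\mathbf A^+}\boldsymbol\mu$; thus $\boldsymbol\mu\in\mathcal E^+_g(\mathbf A^+;D)$ and $\|R_{\mathbf A^+}\boldsymbol\mu_k-R_{\mathbf A^+}\boldsymbol\mu\|_g\to0$. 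The constraints $\mu^j\leqslant\xi^j$ pass to the vague limit (argue along a further subsequence on which the $\xi^j-\mu^j_k$, $j\in I^+$, also converge vaguely). For optimality, $\|R_{\mathbf A^+}\boldsymbol\mu\|^2_g=\lim_k\|R_{\mathbf A^+}\boldsymbol\mu_k\|^2_g$ by strong convergence, while the linear term is handled by vague lower semicontinuity (Lemma~\ref{lemma-semi}) in Case~I and, in Case~II, by rewriting $G_{g,\mathbf f^+}$ via (\ref{est:bel}) and using $\zeta\in\mathcal E_g(D)$ together with the strong convergence just obtained; in either case $G_{g,\mathbf f^+}(\boldsymbol\mu)\leqslant\liminf_k G_{g,\mathbf f^+}(\boldsymbol\mu_k)=G^{\boldsymbol\xi^+}_{g,\mathbf f^+}(\mathbf A^+,\mathbf a^+;D)$, and the reverse inequality is immediate once $\boldsymbol\mu$ is known to be admissible. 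Combined with Theorem~\ref{th-rel}, this will also yield Theorem~\ref{th-suff}, with $\lambda^p_{\mathbf A}=\bigl(\sum_{j\in I^+}\lambda^j_{\mathbf A}\bigr)'$.

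The main obstacle I anticipate is the verification that the normalization survives the vague passage to the limit, i.e.\ that $\mu^j(A_j)=a_j$ for all $j\in I^+$, not merely $\mu^j(A_j)\leqslant a_j$ — and this is exactly where the hypothesis $\xi^j(A_j)<\infty$, cf.\ (\ref{boundd}), enters. Lower semicontinuity of the total mass on the open set $D$ gives only $\mu^j(A_j)\leqslant\liminf_k\mu^j_k(A_j)=a_j$, and a priori mass could leak to infinity within $D$. To exclude this I would exploit the domination of $\mu^j_k$ by the \emph{bounded} measure $\xi^j$: given $\varepsilon>0$, inner regularity yields a compact $K\subset A_j$ with $\xi^j(A_j\setminus K)<\varepsilon$, whence $\mu^j_k(A_j\setminus K)\leqslant\xi^j(A_j\setminus K)<\varepsilon$ and so $\mu^j_k(K)>a_j-\varepsilon$ uniformly in $k$; testing against $\varphi\in C_0(D)$ with $1_K\leqslant\varphi\leqslant1$ gives $\mu^j(A_j)\geqslant\mu^j(\varphi)=\lim_k\mu^j_k(\varphi)\geqslant a_j-\varepsilon$, hence $\mu^j(A_j)\geqslant a_j$ and equality follows. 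The breakdown of this uniform tightness when $\xi^j(A_j)=\infty$ is precisely the mechanism behind the sharpness statement Theorem~\ref{th-unsuff}.
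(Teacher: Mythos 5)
Your proposal is correct and follows essentially the same route as the paper: parallelogram identity to make the minimizing sequence strong Cauchy in $\mathcal E^+_g(D)$, vague compactness plus perfectness of $g^\alpha_D$ to identify the strong and vague limits, and the boundedness $\xi^j(A_j)<\infty$ as the uniform tightness that prevents mass from escaping to $\partial D$ or to infinity. Your compact-exhaustion argument for $\mu^j(A_j)\geqslant a_j$ and your treatment of the linear term (vague l.s.c.\ in Case~I, strong continuity via (\ref{est:bel}) in Case~II) coincide with the paper's proof in all essentials.
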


\begin{proof} Note that Problem~\ref{pr2gen} for $D$, $g$, $\mathbf A^+$, $\mathbf a^+$, $\mathbf f^+$ and $\boldsymbol\xi^+$ makes sense since by assumption (\ref{cggen}) and identity (\ref{equality}) we have
\begin{equation}\label{g-c-plusinf}G_{g,\mathbf{f}^+}^{\boldsymbol\xi^+}(\mathbf A^+,\mathbf a^+;D)<\infty.\end{equation}
Actually, $G_{g,\mathbf{f}^+}^{\boldsymbol\xi^+}(\mathbf A^+,\mathbf a^+;D)$ is finite, which is clear from (\ref{equality}) and footnote~\ref{foot-G-finite}.
In view of (\ref{g-c-plusinf}), there is a sequence $\{\boldsymbol\mu_k\}_{k\in\mathbb N}\subset\mathcal E_{g,\mathbf{f}^+}^{\boldsymbol\xi^+}(\mathbf A^+,\mathbf a^+;D)$ such that
\begin{equation}\label{min-seq}\lim_{k\to\infty}\,G_{g,\mathbf{f}^+}(\boldsymbol\mu_k)=G_{g,\mathbf{f}^+}^{\boldsymbol\xi^+}(\mathbf A^+,\mathbf a^+;D).\end{equation}
Since the $\alpha$-Green kernel $g$ satisfies the energy principle \cite[Theorem~4.9]{FZ}, $\mathcal E_g(D)$ forms a pre-Hilbert space with the inner product $g(\nu,\nu_1)$ and the energy norm $\|\nu\|_g=\sqrt{g(\nu,\nu)}$. Furthermore, $\mathcal E_{g,\mathbf{f}^+}^{\boldsymbol\xi^+}(\mathbf A^+,\mathbf a^+;D)$ is a convex cone and $R_{\mathbf A^+}$ is an isometric mapping between the semimetric space $\mathcal E_g^+(\mathbf A^+;D)$ and its $R_{\mathbf A^+}$-im\-age into $\mathcal E_g(D)$ (see Theorem~\ref{lemma:semimetric}). We are therefore able to apply to the set $\{\boldsymbol\mu_k:\ k\in\mathbb N\}$ arguments similar to those in the proof of Lemma~\ref{lemma:unique:}, and we get
\begin{equation*}0\leqslant\|R_{\mathbf A^+}\boldsymbol{\mu}_k-R_{\mathbf A^+}\boldsymbol{\mu}_\ell\|^2_g\leqslant-
4G^{\boldsymbol{\xi}^+}_{g,\mathbf f^+}(\mathbf A^+,\mathbf a^+;D)+2G_{g,\mathbf f^+}(\boldsymbol{\mu}_k)+2G_{g,\mathbf f^+}(\boldsymbol{\mu}_\ell).\end{equation*}
Letting here $k,\ell\to\infty$ and combining the relation thus obtained with (\ref{min-seq}), we see in view of the finiteness of $G_{g,\mathbf{f}^+}^{\boldsymbol\xi^+}(\mathbf A^+,\mathbf a^+;D)$ that $\{R_{\mathbf A^+}\boldsymbol\mu_k\}_{k\in\mathbb N}$ forms a strong Cauchy sequence in the metric space $\mathcal E^+_g(D)$. In particular, this implies
\begin{equation}\label{M}\sup_{k\in\mathbb N}\,\|R_{\mathbf A^+}\boldsymbol\mu_k\|_g<\infty.\end{equation}

Since the sets $A_j$, $j\in I^+$, are (relatively) closed in $D$, the cones $\mathfrak M^{\xi^j}(A_j;D)$, $j\in I^+$, are vaguely closed in $\mathfrak M(D)$, and therefore $\mathfrak M^{\boldsymbol\xi^+}(\mathbf A^+;D)$ is vaguely closed in $\mathfrak M(D)^{p-1}$ (cf.\ Definition~\ref{def-vague}).
Furthermore, $\mathfrak M^{\boldsymbol\xi^+}(\mathbf A^+,\mathbf a^+;D)$ is vaguely bounded, hence vaguely relatively compact by Lemma~\ref{l:vague:c}. Thus, there is a vague cluster point $\boldsymbol{\mu}$ of the sequence $\{\boldsymbol\mu_k\}_{k\in\mathbb N}$ chosen above, which belongs to $\mathfrak M^{\boldsymbol\xi^+}(\mathbf A^+;D)$. Passing to a subsequence and changing notations, we assume that
\begin{equation}\label{vag-conv}\boldsymbol\mu_k\to\boldsymbol{\mu}\text{ \ vaguely as \ }k\to\infty.\end{equation}
We assert that the $\boldsymbol{\mu}$ is a solution to Problem~\ref{pr2gen} for $D$, $g$, $\mathbf A^+$, $\mathbf a^+$, $\mathbf f^+$ and $\boldsymbol\xi^+$.

Fix $j\in I^+$. Applying Lemma~\ref{lemma-semi} to $1_D\in\Psi(D)$, we obtain from (\ref{vag-conv})
\begin{equation*}\label{eq}\mu^j(D)\leqslant\lim_{k\to\infty}\,\mu_k^j(D)=a_j.\end{equation*}
We proceed by showing that the inequality here is in fact an equality, and hence
\begin{equation}\label{solution1}\boldsymbol{\mu}\in\mathfrak M^{\boldsymbol\xi^+}(\mathbf A^+,\mathbf a^+;D).\end{equation}
Consider an exhaustion of $A_j$ by an increasing sequence of compact sets $K_\ell\subset A_j$, $\ell\to\infty$. Since each $-1_{K_\ell}\in\Psi(D)$, we get from Lemma~\ref{lemma-semi} \begin{align*}a_j&\geqslant\langle 1_D,\mu^j\rangle=\lim_{\ell\to\infty}\,\langle 1_{K_\ell},\mu^j\rangle\geqslant\lim_{\ell\to\infty}\,\limsup_{k\to\infty}\,\langle 1_{K_\ell},\mu_{k}^j\rangle\\&{}
=a_j-\lim_{\ell\to\infty}\,\liminf_{k\to\infty}\,\langle 1_{A_j\setminus K_\ell},\mu_k^j\rangle.\end{align*}
Thus (\ref{solution1}) will follow once we show that
\begin{equation}\label{g0}\lim_{\ell\to\infty}\,\liminf_{k\to\infty}\,\langle 1_{A_j\setminus K_\ell},\mu_k^j\rangle=0.\end{equation}
By (\ref{boundd}),
\[\infty>\xi^j(D)=\lim_{\ell\to\infty}\,\langle 1_{K_\ell},\xi^j\rangle\]
and therefore
\[\lim_{\ell\to\infty}\,\langle 1_{A_j\setminus K_\ell},\xi^j\rangle=0.\]
Combined with
\[\langle 1_{A_j\setminus K_\ell},\mu_k^j\rangle\leqslant\langle 1_{A_j\setminus K_\ell},\xi^j\rangle\text{ \ for all \ }k\in\mathbb N,\]
this implies (\ref{g0}), and hence (\ref{solution1}).

Furthermore, as $R_{\mathbf A^+}\boldsymbol\mu_k\to R_{\mathbf A^+}\boldsymbol\mu$ vaguely in $\mathfrak M^+(D)$,  \cite[Chapitre~III, Section~5, Exercice~5]{B2} implies that $R_{\mathbf A^+}\boldsymbol\mu_k\otimes R_{\mathbf A^+}\boldsymbol\mu_k\to R_{\mathbf A^+}\boldsymbol\mu\otimes R_{\mathbf A^+}\boldsymbol\mu$ vaguely in $\mathfrak M^+(D\times D)$. Lemma~\ref{lemma-semi} with $X=D\times D$ and $\psi=g$ therefore yields
\[g(R_{\mathbf A^+}\boldsymbol\mu,R_{\mathbf A^+}\boldsymbol\mu)\leqslant\liminf_{k\to\infty}\,\|R_{\mathbf A^+}\boldsymbol\mu_k\|^2_g<\infty,\]
the latter holds by (\ref{M}). Together with (\ref{enpos}) and (\ref{solution1}) this gives
$\boldsymbol\mu\in\mathcal E_g^{\boldsymbol\xi^+}(\mathbf A^+,\mathbf a^+;D)$. Since $G_{g,\mathbf f^+}(\boldsymbol\mu)>-\infty$, $\boldsymbol{\mu}\in\mathfrak S^{\boldsymbol\xi^+}_{g,\mathbf f^+}(\mathbf A^+,\mathbf a^+;D)$ will be established once we have shown that
\begin{equation}\label{incl3}G_{g,\mathbf f^+}(\boldsymbol\mu)\leqslant\lim_{k\to\infty}\,G_{g,\mathbf f^+}(\boldsymbol\mu_k).\end{equation}

Since the kernel $g$ is perfect \cite[Theorem~4.11]{FZ}, the sequence $\{R_{\mathbf A^+}\boldsymbol\mu_k\}_{k\in\mathbb N}$, being strong Cauchy in $\mathcal E^+_g(D)$ and vaguely convergent to $R_{\mathbf A^+}\boldsymbol\mu$, converges to the same limit strongly in $\mathcal E_g^+(D)$, i.e.
\[\lim_{k\to\infty}\,\|R_{\mathbf A^+}\boldsymbol\mu_k-R_{\mathbf A^+}\boldsymbol\mu\|_g=0,\]
which in view of (\ref{isom}) is equivalent to the relation
\begin{equation}\label{conv-str}\lim_{k\to\infty}\,\|\boldsymbol\mu_k-\boldsymbol\mu\|_{\mathcal E^+_g(\mathbf A^+;D)}=0.\end{equation}

Also note that the mapping $\boldsymbol\nu\mapsto G_{g,\mathbf f^+}(\boldsymbol\nu)$ is vaguely l.s.c., resp.\ strongly continuous, on $\mathcal E^+_{g,\mathbf f^+}(\mathbf A^+;D)$ if Case~I, resp.\ Case~II, takes place. In fact, since $g(\boldsymbol\nu,\boldsymbol\nu)$ is vaguely l.s.c.\ on $\mathcal E^+_g(\mathbf A^+;D)$, the former assertion follows from Lemma~\ref{lemma-semi}. As for the latter assertion, it is obvious by (\ref{est:bel}). In view of this observation, (\ref{vag-conv}) and (\ref{conv-str}) result in~(\ref{incl3}).\end{proof}

\begin{corollary}Suppose that the assumptions of Theorem\/~{\rm\ref{th-suff}} are fulfilled. Then the\/ {\rm(}nonempty\/{\rm)} class\/ $\mathfrak S^{\boldsymbol\xi^+}_{g,\mathbf f^+}(\mathbf A^+,\mathbf a^+;D)$ of all solutions to Problem\/~{\rm\ref{pr2gen}} for\/ $D$, $g$, $\mathbf A^+$, $\mathbf a^+$, $\mathbf f^+$ and\/ $\boldsymbol\xi^+$ is vaguely compact in\/ $\mathfrak M(D)^{p-1}$.\end{corollary}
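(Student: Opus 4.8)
The plan is to obtain vague compactness as vague relative compactness together with vague closedness. Write $\mathfrak S:=\mathfrak S^{\boldsymbol\xi^+}_{g,\mathbf f^+}(\mathbf A^+,\mathbf a^+;D)$, which is nonempty by Theorem~\ref{exists}. First I would note that, by hypothesis (\ref{boundd}), every $\boldsymbol\mu\in\mathfrak S$ satisfies $\mu^j\leqslant\xi^j$ with $\xi^j(A_j)<\infty$, so $\mathfrak S\subset\mathfrak M^{\boldsymbol\xi^+}(\mathbf A^+,\mathbf a^+;D)$ is vaguely bounded; hence, by Lemma~\ref{l:vague:c}, $\mathfrak S$ is vaguely relatively compact in $\mathfrak M(D)^{p-1}$, and it remains only to show that $\mathfrak S$ is vaguely closed (then $\mathfrak S$ coincides with its compact vague closure). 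Since the vague topology on $\mathfrak M(D)^{p-1}$ is first countable, it is enough to verify sequential closedness.

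Next I would pick an arbitrary sequence $\{\boldsymbol\mu_k\}_{k\in\mathbb N}\subset\mathfrak S$ converging vaguely to some $\boldsymbol\mu\in\mathfrak M(D)^{p-1}$ and argue that $\boldsymbol\mu\in\mathfrak S$. The key observation is that, since $G_{g,\mathbf f^+}(\boldsymbol\mu_k)=G^{\boldsymbol\xi^+}_{g,\mathbf f^+}(\mathbf A^+,\mathbf a^+;D)$ for every $k$, the sequence $\{\boldsymbol\mu_k\}$ is in particular a minimizing sequence for Problem~\ref{pr2gen} relative to $D$, $g$, $\mathbf A^+$, $\mathbf a^+$, $\mathbf f^+$, $\boldsymbol\xi^+$. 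Therefore the arguments in the proof of Theorem~\ref{exists} apply to it directly: the vague closedness of the cones $\mathfrak M^{\xi^j}(A_j;D)$ gives $\mu^j\leqslant\xi^j$, the mass‑loss estimate resting on (\ref{boundd}) that establishes (\ref{g0}) forces $\mu^j(A_j)=a_j$, the minimizing‑sequence/parallelogram estimate of Lemma~\ref{lemma:unique:} shows $\{R_{\mathbf A^+}\boldsymbol\mu_k\}$ is strong Cauchy in $\mathcal E^+_g(D)$ (whence $\boldsymbol\mu$ has finite $g$‑energy and, by the perfectness of $g$, $R_{\mathbf A^+}\boldsymbol\mu_k\to R_{\mathbf A^+}\boldsymbol\mu$ strongly), and the vague lower semicontinuity of $G_{g,\mathbf f^+}$ in Case~I — respectively its strong continuity in Case~II, via (\ref{est:bel}) — yields $G_{g,\mathbf f^+}(\boldsymbol\mu)\leqslant\lim_k G_{g,\mathbf f^+}(\boldsymbol\mu_k)=G^{\boldsymbol\xi^+}_{g,\mathbf f^+}(\mathbf A^+,\mathbf a^+;D)$. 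Hence $\boldsymbol\mu\in\mathfrak S$, so $\mathfrak S$ is vaguely closed, and the corollary follows.

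I expect the main obstacle to be not a single hard step but rather keeping track of where hypothesis (\ref{boundd}) enters: it is used twice, once to make $\mathfrak M^{\boldsymbol\xi^+}(\mathbf A^+,\mathbf a^+;D)$ vaguely bounded (hence $\mathfrak S$ vaguely relatively compact) and once to rule out escape of mass to infinity along the sequence, i.e.\ to keep the prescribed masses $a_j$ in the vague limit; everything else is an appeal to the material already assembled for Theorem~\ref{exists} (vague compactness of bounded families, strong completeness of $\mathcal E^+_g(D)$ from the perfectness of $g$, and semicontinuity of the Gauss functional). As an alternative in Case~I one could bypass the sequential reduction altogether by writing $\mathfrak S=\mathfrak M^{\boldsymbol\xi^+}(\mathbf A^+,\mathbf a^+;D)\cap\{\boldsymbol\mu:\ G_{g,\mathbf f^+}(\boldsymbol\mu)\leqslant G^{\boldsymbol\xi^+}_{g,\mathbf f^+}(\mathbf A^+,\mathbf a^+;D)\}$ as an intersection of two vaguely closed sets, but the sequential argument has the advantage of covering Cases~I and~II simultaneously.
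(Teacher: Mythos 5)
Your proposal is correct and follows essentially the same route as the paper: the paper likewise combines vague relative compactness of the (vaguely bounded) solution class with the observation that, by Lemma~\ref{lemma:unique:}, any sequence of solutions is a strong Cauchy (indeed minimizing) sequence to which the argument of Theorem~\ref{exists} applies verbatim, so every vague cluster point is again a solution. Your explicit appeal to first countability to pass from sequential closedness to closedness just makes precise a step the paper leaves implicit.
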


\begin{proof}According to Lemma~\ref{lemma:unique:}, any solutions $\boldsymbol\mu_k\in\mathfrak S^{\boldsymbol\xi^+}_{g,\mathbf f^+}(\mathbf A^+,\mathbf a^+;D)$, $k\in\mathbb N$, form a strong Cauchy sequence in $\mathcal E^+_g(\mathbf A^+;D)$. Furthermore, the set $\{\boldsymbol\mu_k:\ k\in\mathbb N\}$ is vaguely closed and vaguely relatively compact in $\mathfrak M(D)^{p-1}$ (see Section~\ref{sec:vague} with $X=D$). Therefore in the same manner as in the proof of Theorem~\ref{exists} one can see that any vague cluster point of $\{\boldsymbol\mu_k\}_{k\in\mathbb N}$ belongs to $\mathfrak S^{\boldsymbol\xi^+}_{g,\mathbf f^+}(\mathbf A^+,\mathbf a^+;D)$.\end{proof}

\subsection{Proof of Theorem~\ref{th-unsuff}} Assume that the requirements of the latter part of the theorem are fulfilled. By Theorem~\ref{th-rel}, the former equality in (\ref{Gzero}) holds. Furthermore, since Case II with $\zeta\geqslant0$ takes place, we get from (\ref{IIgg}) and (\ref{IIg})
\begin{equation}\label{caseII}G_{g,f_1|_D}(\nu)=\|\nu\|_g^2+2g(\zeta,\nu)\in[0,\infty)\text{ \ for all \ }\nu\in\mathcal E^+_g(A_1;D).\end{equation}

Consider numbers $r_\ell>0$, $\ell\in\mathbb N$, such that $r_\ell\uparrow\infty$ as $\ell\to\infty$, and write $B_{r_\ell}:=B(0,r_\ell)$, $A_{1,r_\ell}:=A_1\cap B_{r_\ell}$.
Since $c_\alpha(A_1)=\infty$ by assumption and since $c_\alpha(B_{r_\ell})<\infty$ for every $\ell\in\mathbb N$, we infer from the subadditivity of $c_\alpha(\cdot)$ on universally measurable sets \cite[Lemma~2.3.5]{F1} that $c_\alpha(A_1\setminus B_{r_\ell})=\infty$. Hence for every $\ell\in\mathbb N$ there is $\xi_\ell\in\mathcal E_\alpha^+(A_1\setminus B_{r_\ell},a_1;\mathbb R^n)$ of compact support $S_D^{\xi_\ell}$ such that
\begin{equation}\label{to0}\|\xi_\ell\|_\alpha\leqslant\ell^{-2}.\end{equation}
Clearly, the $r_\ell$ can be chosen successively so that $A_{1,r_\ell}\cup S^{\xi_\ell}_D\subset A_{1,r_{\ell+1}}$. Any compact set $K\subset\mathbb R^n$ is contained in a ball $B_{r_{\ell_0}}$ with $\ell_0$ large enough, and hence $K$ has points in common with only finitely many $S^{\xi_\ell}_D$. Therefore, $\xi^1$ defined by
\[\xi^1(\varphi):=\sum_{\ell\in\mathbb N}\,\xi_\ell(\varphi)\text{ \ for any \ }\varphi\in C_0(\mathbb R^n)\]
is a positive Radon measure on $\mathbb R^n$ carried by $A_1$. Furthermore, $\xi^1(A_1)=\infty$ and $\xi^1\in\mathcal E^+_\alpha(\mathbb R^n)$. To prove the latter, note that  $\eta_k:=\xi_1+\dots+\xi_k\in\mathcal E^+_\alpha(\mathbb R^n)$ in view of (\ref{to0}) and the triangle inequality in $\mathcal E_\alpha(\mathbb R^n)$. Also observe that $\eta_k\to\xi^1$ vaguely because for any $\varphi\in C_0(\mathbb R^n)$ there is $k_0$ such that
$\xi^1(\varphi)=\eta_k(\varphi)$ for all $k\geqslant k_0$. As $\|\eta_k\|_\alpha\leqslant L:=\sum_{\ell\in\mathbb N}\,\ell^{-2}<\infty$ for all $k\in\mathbb N$, Lemma~\ref{lemma-semi} with $X=A_1\times A_1$ and $\psi=\kappa_\alpha|_{A_1\times A_1}$ yields $\|\xi^1\|_\alpha\leqslant L$.

Each $\xi_\ell$ belongs to $\mathcal E^+_g(A_1,a_1;D)$ and moreover, by (\ref{g-ineq}) and~(\ref{to0}),
\begin{equation}\label{estell}\|\xi_\ell\|_g\leqslant\|\xi_\ell\|_\alpha\leqslant\ell^{-2}.\end{equation}
As Case II with $\zeta\geqslant0$ takes place, $\xi_\ell\in\mathcal E_{g,f_1|_D}^{\xi^1}(A_1,a_1;D)$ for all $\ell\in\mathbb N$ by (\ref{caseII}). Therefore, by the Cauchy--Schwarz (Bunyakovski) inequality in $\mathcal E_{g}(D)$,
\[0\leqslant G_{g,f_1|_D}^{\xi^1}(A_1,a_1;D)\leqslant\lim_{\ell\to\infty}\,\bigl[\|\xi_\ell\|_g^2+2g(\zeta,\xi_\ell)\bigr]\leqslant2\|\zeta\|_g\lim_{\ell\to\infty}\,\|\xi_\ell\|_g=0,\]
where the first and the second inequalities hold by (\ref{caseII}), and the third inequality and the equality are valid by (\ref{estell}). Hence $G_{g,f_1|_D}^{\xi^1}(A_1,a_1;D)=0$, and the theorem follows.

\section{Proofs of Theorems~\ref{desc-pot}, \ref{desc-sup} and  \ref{zone}}\label{sec-pr2}

Throughout this section we maintain all the requirements on $\mathbf A$, $\mathbf a$, $\mathbf f$, and $\boldsymbol\xi$ imposed at the beginning of Section~\ref{sec-form}, except for (\ref{cggen}) which follows automatically from the hypotheses of the assertions under proving in view of Lemma~\ref{suff-fin}.

\subsection{Proof of Theorem~\ref{desc-pot}} Fix $\boldsymbol\lambda_{\mathbf A}\in\mathcal E^+_{\alpha,\mathbf f}(\mathbf A,\mathbf a;\mathbb R^n)$. Then each $\lambda_{\mathbf A}^i$, $i\in I$, has finite $\alpha$-Riesz energy, and hence it is $c_\alpha$-absolutely continuous. Note that, since $f_p=0$ n.e.\ on $A_p$, (\ref{reprrr'}) can alternatively be written as $\kappa_\alpha^{\boldsymbol\lambda_{\mathbf A},p}=0$ n.e.\ on $A_p$, which by (\ref{repp}) (with $X=\mathbb R^n$ and $\kappa=\kappa_\alpha$) is equivalent to the relation
\[\kappa_\alpha(\cdot,R_{\mathbf A}\boldsymbol\lambda_{\mathbf A}^+-\lambda^p)=0\text{ \ n.e.~on \ }A_p.\]
In view of the characteristic property (\ref{bal-eq}) of the swept measures, this shows that for the given $\boldsymbol\lambda_{\mathbf A}$, (\ref{reprrr'}) and (\ref{reprrr}) are equivalent. On account of Theorem~\ref{th-rel}, we thus see that {\it when proving the equivalence of assertions\/ {\rm(i)} and\/ {\rm(ii)} of Theorem\/~{\rm\ref{desc-pot}}, there is no loss of generality in assuming\/ $\boldsymbol\lambda_{\mathbf A}$ to satisfy\/~{\rm(\ref{reprrr})}}.

Substituting (\ref{reprrr}) into (\ref{repp}), we therefore get for every $i\in I$
\begin{equation}\label{Wi1}\kappa_\alpha^{\boldsymbol\lambda_{\mathbf A},i}(\cdot)=s_i\kappa_\alpha\bigl(\cdot,R_{\mathbf A}\boldsymbol\lambda_{\mathbf A}^+-(R_{\mathbf A}\boldsymbol\lambda_{\mathbf A}^+)'\bigr)\text{ \ n.e.\ on \ }\mathbb R^n.\end{equation}
In particular, for every $j\in I^+$ we have
\begin{equation}\label{Wi}\kappa_\alpha^{\boldsymbol\lambda_{\mathbf A},j}(\cdot)=g(\cdot,R_{\mathbf A^+}\boldsymbol\lambda_{\mathbf A}^+)=g^{\boldsymbol\lambda_{\mathbf A}^+,j}(\cdot)\text{ \ n.e.\ on \ }D\end{equation}
and hence, by (\ref{wpot'}),
\[W_{\alpha,\mathbf{f}}^{\boldsymbol\lambda_{\mathbf A},j}=W_{g,\mathbf{f}^+}^{\boldsymbol\lambda_{\mathbf A}^+,j}\text{ \ n.e.\ on \ }D.\]
(Note that (\ref{Wi}) has been obtained from (\ref{Wi1}) with the aid of (\ref{hatg}), applied to $R_{\mathbf A^+}\boldsymbol\lambda_{\mathbf A}^+$ in place of $\mu$, and (\ref{repp}), the latter now with $X=D$ and $\kappa=g$.)

If Case~II holds, then for every $i\in I$ we also get from (\ref{Wi1}) and (\ref{wpot'})
\[W_{\alpha,\mathbf f}^{\boldsymbol\lambda_{\mathbf A},i}(\cdot)=s_i\kappa_\alpha\bigl(\cdot,(R_{\mathbf A}\boldsymbol\lambda_{\mathbf A}^++\zeta)-(R_{\mathbf A}\boldsymbol\lambda_{\mathbf A}^++\zeta)'\bigr)\text{ \ n.e.\ on \ }\mathbb R^n.\]
By \cite[Corollary~3.14]{FZ}, the function on the right (hence, also that on the left) in this relation takes the value $0$ at every $\alpha$-regular point of $A_p$, which gives~(\ref{reprrr1'}).

By Theorem~\ref{th-rel}, what has been shown just above yields that Theorem~\ref{desc-pot} will be proved once the following theorem has been established.

\begin{theorem}\label{desc-pot-g} Under the hypotheses of Theorem\/~{\rm\ref{desc-pot}} the following two assertions are equivalent for any\/ $\boldsymbol\lambda\in\mathcal E^{\boldsymbol\xi^+}_{g,\mathbf f^+}(\mathbf A^+,\mathbf a^+;D)${\rm:}
\begin{itemize}
\item[{\rm(i$'$)}] $\boldsymbol\lambda\in\mathfrak S^{\boldsymbol{\xi}^+}_{g,\mathbf f^+}(\mathbf A^+,\mathbf a^+;D)$.
\item[{\rm(ii$'$)}] There is a vector\/ $(c_j)_{j\in I^+}\in\mathbb R^{p-1}$ such that for all\/ $j\in I^+$
\begin{align}\label{b1'}W^{\boldsymbol\lambda,j}_{g,\mathbf f^+}&\geqslant c_j\quad(\xi^j-\lambda^j)\text{-a.e.},\\
\label{b2'}W^{\boldsymbol\lambda,j}_{g,\mathbf f^+}&\leqslant c_j\quad\lambda^j\text{-a.e.}
\end{align}
\end{itemize}
\end{theorem}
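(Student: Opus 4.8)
The plan is to recast (i$'$) as a first-order optimality condition via Lemma~\ref{aux43}, and then convert that condition into the pointwise relations (\ref{b1'})--(\ref{b2'}) by testing against small admissible perturbations of $\boldsymbol\lambda$. Write $V^j:=W^{\boldsymbol\lambda,j}_{g,\mathbf f^+}=g(\cdot,R_{\mathbf A^+}\boldsymbol\lambda)+f_j$ (by (\ref{potv}) with $X=D$, $\kappa=g$, since $s_j=1$ for $j\in I^+$). First I would record that each $V^j$ is bounded below on $A_j$ (because $g\geqslant0$ and $f_j$ is lower bounded by hypothesis), is finite $(\xi^j-\lambda^j)$-a.e.\ and $\lambda^j$-a.e.\ (since $g(\cdot,R_{\mathbf A^+}\boldsymbol\lambda)$ is finite $c_g$-n.e.\ while $f_j$ is finite $\xi^j$-a.e.\ by (\ref{acirc}), and $\xi^j$, having finite $g$-energy by (\ref{constr1}) and (\ref{l3-1}), does not charge $c_g$-null sets, with $\lambda^j\leqslant\xi^j$), and is $\lambda^j$-integrable (as $\boldsymbol\lambda\in\mathcal E^{\boldsymbol\xi^+}_{g,\mathbf f^+}$); hence all integrals $\langle V^j,\cdot\rangle$ below are well defined with values in $(-\infty,\infty]$. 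Since $\mathcal E^{\boldsymbol\xi^+}_{g,\mathbf f^+}(\mathbf A^+,\mathbf a^+;D)$ is convex and contained in $\mathcal E^+_{g,\mathbf f^+}(\mathbf A^+;D)$, Lemma~\ref{aux43} shows that (i$'$) is equivalent to
\[\sum_{j\in I^+}\,\langle V^j,\mu^j-\lambda^j\rangle\geqslant0\quad\text{for all }\boldsymbol\mu=(\mu^j)_{j\in I^+}\in\mathcal E^{\boldsymbol\xi^+}_{g,\mathbf f^+}(\mathbf A^+,\mathbf a^+;D).\]

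To deduce this inequality from (ii$'$), I would fix an admissible $\boldsymbol\mu$ and split, for each $j$, $\mu^j-\lambda^j=(\mu^j-\lambda^j)^+-(\mu^j-\lambda^j)^-$, observing that $(\mu^j-\lambda^j)^+\leqslant\min(\mu^j,\xi^j-\lambda^j)$ and $(\mu^j-\lambda^j)^-\leqslant\lambda^j$, so that both parts are bounded measures of one and the same total mass. Then (\ref{b1'}) gives $\langle V^j,(\mu^j-\lambda^j)^+\rangle\geqslant c_j(\mu^j-\lambda^j)^+(A_j)$ and (\ref{b2'}) gives $\langle V^j,(\mu^j-\lambda^j)^-\rangle\leqslant c_j(\mu^j-\lambda^j)^-(A_j)<\infty$; subtracting and summing over $j\in I^+$ yields the variational inequality.

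For the converse I would argue componentwise. Fix $j\in I^+$. As $(\xi^j-\lambda^j)$ does not charge $A_j\setminus A_j^\circ$, for $m$ large the set $A_j\cap\{|f_j|\leqslant m\}$ has positive $(\xi^j-\lambda^j)$-measure and hence contains a compact $K$ with $(\xi^j-\lambda^j)(K)>0$. I would test the variational inequality with $\mu^k=\lambda^k$ for $k\ne j$ and $\mu^j=\lambda^j+t(\tau_1-\tau_2)$, where $t>0$ is small, $\tau_1\leqslant(\xi^j-\lambda^j)|_K$, $\tau_2\leqslant\lambda^j$, and $\tau_1(A_j)=\tau_2(A_j)$ is a small fixed mass; one checks $\boldsymbol\mu$ is admissible (its resultant has finite $g$-energy, and $\langle f_j,\tau_1\rangle<\infty$ because $|f_j|\leqslant m$ on $K$), and the inequality collapses to $\langle V^j,\tau_1\rangle\geqslant\langle V^j,\tau_2\rangle$. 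Varying $\tau_1$ and $\tau_2$ forces the $(\xi^j-\lambda^j)|_K$-essential infimum of $V^j$ to be at least its $\lambda^j$-essential supremum (otherwise one carves $\tau_1$ out of $(\xi^j-\lambda^j)|_K$ on a level set $\{V^j\leqslant c'\}$ and $\tau_2$ out of $\lambda^j$ on $\{V^j\geqslant c''\}$ with $c'<c''$, contradicting the last display). Setting $c_j$ to be the $(\xi^j-\lambda^j)$-essential infimum of $V^j$, relation (\ref{b1'}) is immediate, (\ref{b2'}) follows on exhausting $A_j^\circ$ (which carries both $\xi^j-\lambda^j$ and $\lambda^j$) by compacts of the above type, and $c_j\in\mathbb R$ by these one-sided bounds together with the finiteness of $V^j$ recorded above. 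Combined with Theorem~\ref{th-rel} and the reductions preceding the present statement, this establishes Theorem~\ref{desc-pot}.

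The main obstacle I anticipate is precisely the bookkeeping in the last step: the test perturbations must be chosen so as to stay genuinely inside $\mathcal E^{\boldsymbol\xi^+}_{g,\mathbf f^+}(\mathbf A^+,\mathbf a^+;D)$, in particular so that the external-field term remains finite even though $f_j$ need not be $\xi^j$-integrable, and then the local essential-infimum and essential-supremum estimates must be patched into the global relations (\ref{b1'}) and (\ref{b2'}), all while keeping careful track of which integrals are a priori only known to lie in $(-\infty,\infty]$ so that cancellations such as $\langle V^j,\tau_1\rangle-\langle V^j,\tau_2\rangle$ are legitimate.
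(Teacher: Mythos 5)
Your proposal is correct and follows essentially the same route as the paper: both directions rest on Lemma~\ref{aux43}, the descent from the variational inequality to (\ref{b1'})--(\ref{b2'}) uses exactly the paper's perturbations by measures carved out of $(\xi^j-\lambda^j)$ and $\lambda^j$ on compact subsets of level sets of $W^{\boldsymbol\lambda,j}_{g,\mathbf f^+}$ (with the same use of $A_j^\circ$ to keep the field term finite), and the converse uses the same mass-balance cancellation, your Hahn--Jordan splitting of $\mu^j-\lambda^j$ being interchangeable with the paper's restriction to the sets $A_j^{\pm}(c_j)$. The paper merely packages the componentwise perturbation as a scalar subproblem with the modified field $\tilde f_j=f_j|_D+\sum_{\ell\ne j}g(\cdot,\lambda^\ell)$, which is the same device as your freezing of the components $\mu^k=\lambda^k$, $k\ne j$.
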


\begin{proof} Suppose first that (i$'$) holds. To verify (ii$'$), fix $j\in I^+$. For every $\boldsymbol\mu=(\mu^\ell)_{\ell\in I^+}\in\mathcal E^{\boldsymbol\xi^+}_{g,\mathbf f^+}(\mathbf A^+,\mathbf a^+;D)$ write $\boldsymbol\mu_j:=(\mu_j^\ell)_{\ell\in I^+}$ where $\mu_j^\ell:=\mu^\ell$ for all $\ell\ne j$ and $\mu_j^j=0$; then $\boldsymbol\mu_j\in\mathcal E^+_{g,\mathbf f^+}(\mathbf A^+)$. Also define $\tilde{f}_j:=f_j|_D+g^{\boldsymbol\lambda_j,j}$. By substituting (\ref{potv}) with $\kappa=g$ we then obtain
\begin{equation}\label{ftilde}\tilde{f}_j=f_j|_D+\sum_{\ell\in I^+, \ \ell\ne j}\,g(\cdot,\lambda^\ell).\end{equation}
Since $g(\cdot,\lambda^\ell)>0$ on $D$ for all $\ell\in I^+$ according to \cite[Lemma~4.1]{FZ} and since $f_j$ is lower bounded on $A_j$ by assumption, the function
\begin{equation}\label{sW}W_{g,\tilde{f}_j}^{\lambda^j}:=g(\cdot,\lambda^j)+\tilde{f}_j,\quad j\in I^+,\end{equation}
is likewise lower bounded on $A_j$. Furthermore, both $\tilde{f}_j$ and $W_{g,\tilde{f}_j}^{\lambda^j}$ are finite n.e.\ on the set $A_j^\circ$, which is clear from (\ref{Acirc}) and Lemma~\ref{l-Rpot}.

Applying (\ref{Re}) and (\ref{wen'}) we get for any
$\boldsymbol\mu\in\mathcal E^{\boldsymbol\xi^+}_{g,\mathbf f^+}(\mathbf A^+,\mathbf a^+;D)$ with the additional property that $\boldsymbol\mu_j=\boldsymbol\lambda_j$ (in particular for $\boldsymbol\mu=\boldsymbol\lambda$)
\[G_{g,\mathbf f^+}(\boldsymbol\mu)=G_{g,\mathbf f^+}(\boldsymbol\lambda_j)+G_{g,\tilde{f}_j}(\mu^j).\]
Combined with $G_{g,\mathbf f^+}(\boldsymbol\mu)\geqslant G_{g,\mathbf f^+}(\boldsymbol\lambda)$, this yields $G_{g,\tilde{f}_j}(\mu^j)\geqslant G_{g,\tilde{f}_j}(\lambda^j)$. Hence, $\lambda^j$ minimizes $G_{g,\tilde{f}_j}(\nu)$ where $\nu$ ranges over $\mathcal E^{\xi^j}_{g,\tilde{f}_j}(A_j,a_j;D)$. This enables us to show that there is $c_j\in\mathbb R$ such that
\begin{align}\label{sing1}W_{g,\tilde{f}_j}^{\lambda^j}&\geqslant c_j\quad(\xi^j-\lambda^j)\text{-a.e.},\\
\label{sing2}W_{g,\tilde{f}_j}^{\lambda^j}&\leqslant c_j\quad\lambda^j\text{-a.e.}\end{align}
In doing this we shall use permanently the fact that both $\xi^j$ and $\lambda^j$ have finite $\alpha$-Riesz energy are hence they are $c_\alpha$-absolutely continuous.

Indeed, (\ref{sing1}) holds with
\[c_j:=L_j:=\sup\,\bigl\{t\in\mathbb R: \ W_{g,\tilde{f}_j}^{\lambda^j}\geqslant t\quad(\xi^j-\lambda^j)\text{-a.e.}\bigr\}.\]
In turn, (\ref{sing1}) with $c_j=L_j$ implies $L_j<\infty$, for $W_{g,\tilde{f}_j}^{\lambda^j}<\infty$ n.e.\ on $A_j^\circ$, and hence $(\xi^j-\lambda^j)$-a.e.\ on $A_j$ by (\ref{acirc}). Also $L_j>-\infty$, because $W_{g,\tilde{f}_j}^{\lambda^j}$ is lower bounded on $A_j$ (see above).

We next establish (\ref{sing2}) with $c_j=L_j$. To this end, write for any $w\in\mathbb R$
\[A_j^+(w):=\bigl\{x\in A_j:\ W_{g,\tilde{f}_j}^{\lambda^j}(x)>w\bigr\},\quad A_j^-(w):=\bigl\{x\in A_j:\ W_{g,\tilde{f}_j}^{\lambda^j}(x)<w\bigr\}.\]
On the contrary, let (\ref{sing2}) with $c_j=L_j$ do not hold, i.e.\ $\lambda^j\bigl(A_j^+(L_j)\bigr)>0$. Since $W_{g,\tilde{f}_j}^{\lambda^j}$ is $\lambda^j$-measurable, one can choose $w_j\in(L_j,\infty)$ so that $\lambda^j\bigl(A_j^+(w_j)\bigr)>0$.
At the same time, as $w_j>L_j$, (\ref{sing1}) with $c_j=L_j$  yields $(\xi^j-\lambda^j)\bigl(A_j^-(w_j)\bigr)>0$.
Therefore, there exist compact sets $K_1\subset A_j^+(w_j)$ and $K_2\subset A_j^-(w_j)$ such that
\begin{equation}\label{c01}0<\lambda^j(K_1)<(\xi^j-\lambda^j)(K_2).\end{equation}

Write $\tau^j:=(\xi^j-\lambda^j)|_{K_2}$; then $g(\tau^j,\tau^j)<\kappa_\alpha(\tau^j,\tau^j)<\infty$, where the former inequality holds by (\ref{g-ineq}). As $\bigl\langle W_{g,\tilde{f}_j}^{\lambda^j},\tau^j\bigr\rangle\leqslant w_j\tau^j(K_2)<\infty$, we therefore get $\langle\tilde{f}_j,\tau^j\rangle<\infty$. Define
$\theta^j:=\lambda^j-\lambda^j|_{K_1}+b_j\tau^j$, where $b_j:=\lambda^j(K_1)/\tau^j(K_2)\in(0,1)$ by (\ref{c01}). Straightforward verification then shows that $\theta^j(A_j)=a_i$ and $\theta^j\leqslant\xi^j$, and hence $\theta^j\in\mathcal E^{\xi^j}_{g,\tilde{f}_j}(A_j,a_j;D)$. On the other hand,
\begin{align*}
\bigl\langle W_{g,\tilde{f}_j}^{\lambda^j},\theta^j-\lambda^j\bigr\rangle&=\bigl\langle
W_{g,\tilde{f}_j}^{\lambda^j}-w_j,\theta^j-\lambda^j\bigr\rangle\\&{}=-\bigl\langle
W_{g,\tilde{f}_j}^{\lambda^j}-w_j,\lambda^j|_{K_1}\bigr\rangle+b_j\bigl\langle
W_{g,\tilde{f}_j}^{\lambda^j}-w_j,\tau^j\bigr\rangle<0,\end{align*} which is
impossible by Lemma~\ref{aux43} with the (convex) set $\mathfrak E=\mathcal E^{\xi^j}_{g,\tilde{f}_j}(A_j,a_j;D)$. This contradiction establishes (\ref{sing2}).

Substituting (\ref{ftilde}) into (\ref{sW}) and then comparing the result obtained with (\ref{potv}) and (\ref{wpot'}), we see that
\begin{equation}\label{ww}W_{g,\tilde{f}_j}^{\lambda^j}=W_{g,\mathbf f^+}^{\boldsymbol\lambda,j}.\end{equation}
Combined with  (\ref{sing1}) and (\ref{sing2}), this establishes (\ref{b1'}) and
(\ref{b2'}), thus completing the proof that (i$'$) implies~(ii$'$).

Conversely, let (ii$'$) hold. On account of (\ref{ww}), for every $j\in I^+$ relations (\ref{sing1}) and (\ref{sing2}) are then fulfilled with $\tilde{f}_j$ defined by (\ref{ftilde}). This yields
\[\lambda^j\bigl(A_j^+(c_j)\bigr)=0\text{ \ and \ }\bigl(\xi^j-\lambda^j\bigr)\bigl(A_j^-(c_j)\bigr)=0.\]
For any $\boldsymbol\nu\in\mathcal E^{\boldsymbol\xi^+}_{g,\mathbf f^+}(\mathbf A^+,\mathbf a^+;D)$ we therefore get
\begin{align*}\bigl\langle
W^{\boldsymbol\lambda,j}_{g,\mathbf f^+},\nu^j-\lambda^j\bigr\rangle&=\bigl\langle
W_{g,\tilde{f}_j}^{\lambda^j}-c_j,\nu^j-\lambda^j\bigr\rangle\\
&{}=\bigl\langle W_{g,\tilde{f}_j}^{\lambda^j}-c_j,\nu^j|_{A_j^+(c_j)}\bigr\rangle+\bigl\langle
W_{g,\tilde{f}_j}^{\lambda^j}-c_j,(\nu^j-\xi^j)|_{A_j^-(c_j)}\bigr\rangle\geqslant0.\end{align*}
Summing up these inequalities over all $j\in I^+$, we conclude from Lemma~\ref{aux43} with the (convex) set $\mathfrak E=\mathcal E^{\boldsymbol\xi^+}_{g,\mathbf f^+}(\mathbf A^+,\mathbf a^+;D)$ that $\boldsymbol\lambda$ satisfies~(i$'$).\end{proof}

\subsection{Proof of Theorem~\ref{desc-sup}} For any $x\in D$ consider the inverse $K_x$ of $C\ell_{\,\overline{\mathbb R^n}}A_p$ relative to $S(x,1)$, $\overline{\mathbb R^n}$ being the one-point compactification of $\mathbb R^n$. Since $K_x$ is compact, there exists the (unique) $\kappa_\alpha$-equ\-il\-ibrium measure $\gamma_x\in\mathcal E^+_\alpha(K_x;\mathbb R^n)$ on $K_x$ possessing the properties $\|\gamma_x\|^2_\alpha=\gamma_x(K_x)=c_\alpha(K_x)$,
\begin{equation}\label{Keq}\kappa_\alpha(\cdot,\gamma_x)=1\text{ \ n.e.\ on \ }K_x,\end{equation}
and $\kappa_\alpha(\cdot,\gamma_x)\leqslant1$ on $\mathbb R^n$.  Note that $\gamma_x\ne0$, for $c_\alpha(K_x)>0$ in consequence of $c_\alpha(A_p)>0$ (see \cite[Chapter~IV, Section~5, n$^\circ$\,19]{L}). We assert that, under the stated requirements,
\begin{equation}\label{eq-desc}
S^{\gamma_x}_{\mathbb R^n}=\left\{
\begin{array}{lll} \breve{K}_x & \text{ \ if \ } & \alpha<2,\\ \partial_{\mathbb R^n} K_x  & \text{ \ if \ } & \alpha=2.\\ \end{array} \right.
\end{equation}
The latter equality in (\ref{eq-desc}) follows from \cite[Chapter~II, Section~3, n$^\circ$\,13]{L}. To establish the former equality,\footnote{We have brought here this proof, since we did not find a reference for this possibly known assertion.} we first note that $S^{\gamma_x}_{\mathbb R^n}\subset\breve{K}_x$ by the $c_\alpha$-ab\-sol\-ute continuity of $\gamma_x$. As for the converse inclusion, assume on the contrary that there is $x_0\in\breve{K}_x$ such that $x_0\notin S^{\gamma_x}_{\mathbb R^n}$. Choose $r>0$ with the property $\overline{B}(x_0,r)\cap S^{\gamma_x}_{\mathbb R^n}=\varnothing$. But $c_\alpha\bigl(B(x_0,r)\cap\breve{K}_x\bigr)>0$, hence by (\ref{Keq}) there exists $y\in B(x_0,r)$ such that $\kappa_\alpha(y,\gamma_x)=1$. The function $\kappa_\alpha(\cdot,\gamma_x)$ is $\alpha$-har\-monic on $B(x_0,r)$ \cite[Chapter~I, Section~5, n$^\circ$\,20]{L},
continuous on $\overline{B}(x_0,r)$, and takes at $y\in B(x_0,r)$ its maximum value $1$. Applying \cite[Theorem~1.28]{L} we obtain $\kappa_\alpha(\cdot,\gamma_x)=1$ $m_n$-a.e.\ on $\mathbb R^n$, hence everywhere on $\breve{K}^c_x$ by the continuity of $\kappa_\alpha(\cdot,\gamma_x)$ on $\bigl(S^{\gamma_x}_{\mathbb R^n}\bigr)^c$ \ $\bigl[{}\supset(\breve{K}_x)^c\bigr]$, and altogether n.e.\ on $\mathbb R^n$ by (\ref{Keq}). This means that $\gamma_x$ serves as the $\alpha$-Riesz equilibrium measure on the whole of $\mathbb R^n$, which is impossible.

Based on (\ref{reprrr}) and on the integral representation (\ref{int-repr}), we then arrive at the claimed relation (\ref{lemma-desc-riesz}) in view of the fact that, for every $x\in D$, $\varepsilon_x'$ is the Kelvin transform of the equilibrium measure $\gamma_x$ \cite[Section~3.3]{FZ}.

\subsection{Proof of Theorem \ref{zone}} Combining (\ref{reprrr1'}), (\ref{potl}) and (\ref{Wi}) gives (\ref{th}). Substituting the first relation from (\ref{th}) into (\ref{b2}) shows that under the stated assumptions the number $c_1$ from Theorem~\ref{desc-pot} is ${}>0$, while (\ref{b1}) now takes the (equivalent) form
\begin{equation}\label{err}\kappa_\alpha(\cdot,\lambda)\geqslant c_1>0\quad (\xi^1-\lambda^+)\text{-a.e.}\end{equation}
Having rewritten (\ref{b2}) as
\[\kappa_\alpha(\cdot,\lambda^+)\leqslant\kappa_\alpha(\cdot,\lambda^-)+c_1\text{ \ $\lambda^+$-a.e.},\]
we infer from \cite[Theorems~1.27, 1.29, 1.30]{L} that the same inequality holds on all of $\mathbb R^n$, which amounts to (\ref{b11}). In turn, (\ref{b11}) yields (\ref{b21}) when combined with~(\ref{err}). It follows directly from Theorem~\ref{desc-pot} that (\ref{b21}) and (\ref{b11}) together with the relation $\kappa_\alpha(\cdot,\lambda)=0$ n.e.\ on $D^c$ determine uniquely the solution $\lambda$ to the problem (\ref{sc-pr}) among the admissible measures.

Assume now that $\kappa_\alpha(\cdot,\xi^1)$ is continuous on $D$. Then so is $\kappa_\alpha(\cdot,\lambda^+)$. Indeed, since $\kappa_\alpha(\cdot,\lambda^+)$ is l.s.c.\ and since $\kappa_\alpha(\cdot,\lambda^+)=\kappa_\alpha(\cdot,\xi^1)-\kappa_\alpha(\cdot,\xi^1-\lambda^+)$ with $\kappa_\alpha(\cdot,\xi^1)$ continuous and
$\kappa_\alpha(\cdot,\xi^1-\lambda^+)$ l.s.c., it follows that $\kappa_\alpha(\cdot,\lambda^+)$ is also upper semicontinuous, hence continuous. Therefore, by the continuity of $\kappa_\alpha(\cdot,\lambda^+)$ on $D$, (\ref{b21}) implies (\ref{b21'}). Thus, by (\ref{th}) and (\ref{b21'}),
\[g(\cdot,\lambda^+)=c_1\text{ \ on \ }S_D^{\xi^1-\lambda^+},\]
which implies (\ref{cg}) in view of \cite[Lemma 3.2.2]{F1} (with $X=D$ and $\kappa=g$).

Omitting now the requirement of continuity of $\kappa_\alpha(\cdot,\xi^1)$, assume further that $\alpha<2$ and $m_n(D^c)>0$. If on the contrary (\ref{pg}) is not fulfilled, then there is $x_0\in S_D^{\xi^1}$ such that $x_0\notin S_D^{\lambda^+}$. Hence one can choose $r>0$ so that
\begin{equation}\label{ball}\overline{B}(x_0,r)\subset D\quad\text{and}\quad\overline{B}(x_0,r)\cap S^{\lambda^+}_D=\varnothing.\end{equation}
Then $(\xi^1-\lambda^+)\bigl(\overline{B}(x_0,r)\bigr)>0$, and hence there exists $y\in\overline{B}(x_0,r)$ with the property
$\kappa_\alpha(y,\lambda)=c_1$ (cf.\ (\ref{b21})), or equivalently
\begin{equation}\label{eq;eq}\kappa_\alpha(y,\lambda^+)=\kappa_\alpha(y,\lambda^-)+c_1.\end{equation}
Since $\kappa_\alpha(\cdot,\lambda^+)$ is $\alpha$-harmonic on $B(x_0,r)$ and continuous on $\overline{B}(x_0,r)$ and since
$\kappa_\alpha(\cdot,\lambda^-)+c_1$ is $\alpha$-super\-harmonic on $\mathbb R^n$, we conclude from (\ref{b11}) and (\ref{eq;eq}) with the aid of
\cite[Theorem~1.28]{L} that
\begin{equation}\label{contr}\kappa_\alpha(\cdot,\lambda)=\kappa_\alpha\bigl(\cdot,\lambda^-)+c_1\quad m_n\mbox{-a.e.\ on \ }\mathbb R^n.\end{equation}
This implies $c_1=0$, because by (\ref{bal-eq}) and (\ref{reprrr})
\[\kappa_\alpha(\cdot,\lambda^+)=\kappa_\alpha\bigl(\cdot,(\lambda^+)'\bigr)=\kappa_\alpha(\cdot,\lambda^-)\text{ \ n.e.\ on \ }D^c,\]
hence $m_n$-a.e.\ on $D^c$. A contradiction.

Similar arguments enable us to establish (\ref{supp}). Indeed, if (\ref{supp}) were not fulfilled at some $x_1\in D\setminus S_D^{\lambda^+}$, then (\ref{eq;eq}) would hold with $x_1$ in place of $y$ (cf.\ (\ref{b11})) and, furthermore, one could choose $r>0$ so that
(\ref{ball}) would be valid with $x_1$ in place of $x_0$. Therefore, since $\kappa_\alpha(\cdot,\lambda^+)$ is $\alpha$-harmonic on $B(x_1,r)$ and continuous on $\overline{B}(x_1,r)$ and since $\kappa_\alpha(\cdot,\lambda^-)+c_1$ is $\alpha$-super\-harmonic on $\mathbb R^n$, we would arrive again at (\ref{contr}) and hence at $c_1=0$, which is impossible.

\section{Examples}

The purpose of the examples below is to illustrate the assertions from Section~\ref{form:main}. Note that in either Example~\ref{ex} or Example~\ref{ex2} the set $A_2=D^c$ is not $\alpha$-thin at infinity.

\begin{example}\label{ex}{\rm Let $n\geqslant3$, $\alpha<2$, $D=B_r:=B(0,r)$, where $r\in(0,\infty)$, and let $I^+=\{1\}$, $A_1=D$, $\mathbf a=\mathbf 1$, $\mathbf f=\mathbf 0$. Define $\xi^1:=q\lambda_r$, where $q\in(1,\infty)$ and $\lambda_r$ is the $\kappa_\alpha$-capacitary measure on $\overline{B}_r:=\overline{B}(0,r)$ (Remark~\ref{remark}). As follows from \cite[Chapter~II, Section~3, n$^\circ$\,13]{L}, $\xi^1\in\mathcal E_\alpha^+(A_1,q;\mathbb R^n)$, $S_D^{\xi^1}=D$ and $\kappa_\alpha(\cdot,\xi^1)$ is continuous on $\mathbb R^n$. Since $\mathbf f=\mathbf 0$, Problem~\ref{pr2gen} reduces to the problem (\ref{sc-pr}) of minimizing $\kappa_\alpha(\mu,\mu)$ over all $\mu\in\mathcal E_\alpha(\mathbb R^n)$ such that $\mu^+\in\mathcal E^{\xi^1}_\alpha(A_1,1;\mathbb R^n)$ and $\mu^-\in\mathcal E^+_\alpha(A_2,1;\mathbb R^n)$, which by Theorem~\ref{th-rel} is equivalent to the problem of minimizing $g^\alpha_D(\nu,\nu)$ over $\mathcal E^{\xi^1}_{g^\alpha_D}(A_1,1;D)$. According to Theorems~\ref{th-suff}, \ref{th-rel} and Corollary~\ref{cor-unique}, these two constrained minimum energy problems are uniquely solvable (no short-circuit occurs between $D$ and $D^c$), and their solutions, denoted respectively by $\lambda_{\alpha,\mathbf A}=\lambda^+-\lambda^-$ and $\lambda_{g,A_1}$, are related to each other as follows:
\[\lambda_{\alpha,\mathbf A}=\lambda_{g,A_1}-\lambda_{g,A_1}'.\]
Furthermore, by (\ref{lemma-desc-riesz}), (\ref{cg}) and (\ref{pg}),
\[S_D^{\lambda^+}=S_D^{\lambda_{g,A_1}}=S_D^{\xi^1}=D,\quad S_{\mathbb R^n}^{\lambda^-}=D^c,\]
\begin{equation}\label{hryu}c_{g_D^\alpha}\bigl(S_D^{\xi^1-\lambda^+}\bigr)<\infty,\end{equation}
while by (\ref{th}), (\ref{b11}) and (\ref{b21'}),
\begin{equation}\label{det}\kappa_\alpha(\cdot,\lambda_{\alpha,\mathbf A})=\left\{
\begin{array}{lll} c_1 & \text{ \ on \ } & S^{\xi^1-\lambda^+}_D,\\ 0  & \text{ \ on \ } & D^c,\\ \end{array} \right.\end{equation}
\begin{equation}\label{ex1}\kappa_\alpha(\cdot,\lambda_{\alpha,\mathbf A})\leqslant c_1\text{ \ on \ }D\setminus S^{\xi^1-\lambda^+}_D,\end{equation}
where $c_1>0$. (In (\ref{det}) we have used the fact that for the given $\alpha$ and $D$, $I_{\alpha,D^c}=\varnothing$.)
Moreover, according to Theorem~\ref{desc-pot}, relations (\ref{det}) and (\ref{ex1}) determine uniquely the solution $\lambda_{\alpha,\mathbf A}$ among the admissible measures.}
\end{example}

\begin{example}\label{ex2}{\rm Let $n=3$, $\alpha=2$, $\mathbf f=\mathbf 0$, $\mathbf a=\mathbf 1$. Define $D:=\{x=(x_1,x_2,x_3)\in\mathbb R^3: \  x_1>0\}$ and $A_1:=\sum_{k\in\mathbb N}\,K_k$, where
\[K_k:=\bigl\{(x_1,x_2,x_3)\in D: \ x_1=k^{-1}, \ x_2^2+x_3^2\leqslant k^2\bigr\},\quad k\in\mathbb N.\]
Let $\lambda_k$ be the $\kappa_2$-capacitary measure on $K_k$ (Remark~\ref{remark}); hence $\lambda_k(K_k)=1$ and $\|\lambda_k\|^2_2=\pi^2/(2k)$ by \cite[Chapter~II, Section~3, n$^\circ$\,14]{L}. Define
\[\xi^1:=\sum_{k\in\mathbb N}\,k^{-2}\lambda_k.\]
In the same manner as in the proof of Theorem~\ref{th-unsuff} one can see that $\xi^1$ is a positive Radon measure carried by $A_1$ with $\kappa_2(\xi^1,\xi^1)<\infty$ and $\xi^1(A_1)\in(1,\infty)$. By Theorem~\ref{th-suff}, Problem~\ref{pr2gen} for the constraint $\boldsymbol\xi=(\xi^1,\infty)$ and the condenser $\mathbf A=(A_1,D^c)$ has therefore a (unique) solution $\lambda_{\alpha,\mathbf A}=\lambda^+-\lambda^-$ (no short-circuit occurs between $A_1$ and $D^c$), although $D^c\cap C\ell_{\mathbb R^3}A_1=\partial D$ and hence
\[c_2\bigl(D^c\cap{C\ell}_{\mathbb R^3}A_1\bigr)=\infty.\]
Furthermore, since each $\kappa_2(\cdot,\lambda_k)$, $k\in\mathbb N$, is continuous on $\mathbb R^n$ and bounded from above by $\pi^2/(2k)$, the potential $\kappa_2(\cdot,\xi^1)$ is continuous on $\mathbb R^n$ by the uniform convergence of the sequence $\sum_{k\in\mathbb N}\,k^{-2}\kappa_2(\cdot,\lambda_k)$.
Hence (\ref{hryu}), (\ref{det}) and (\ref{ex1}) also hold in the present case with $\alpha=2$, again with $c_1>0$, and relations (\ref{det}) and (\ref{ex1}) determine uniquely the solution $\lambda_{\alpha,\mathbf A}$ among the admissible measures. Also note that $S^{\lambda^-}_{\mathbb R^n}=\partial D$ according to~(\ref{lemma-desc-riesz}).}\end{example}

\end{document}